\g@addto@macro{\endabstract}{\@setabstract}
\newcommand{\authorfootnotes}{\renewcommand\thefootnote{\@fnsymbol\c@footnote}}%
\newcommand{\q}{\textbf{q}}
\newtheorem{theorem}{Theorem}[section]
\newtheorem{proposition}[theorem]{Proposition}
\theoremstyle{definition}
\newtheorem{definition}[theorem]{Definition}
\theoremstyle{remark}
\newtheorem{remark}[theorem]{Remark}
\newtheorem{assumption}{Assumption}
\numberwithin{equation}{section}
\title[Contraction Estimates for Stochastic Gradient Langevin Integrators]{Contraction Rate Estimates of Stochastic Gradient Kinetic Langevin Integrators}
\begin{document}
\maketitle
\begin{center}
  \normalsize
  \authorfootnotes
   Benedict Leimkuhler\footnote{b.leimkuhler@ed.ac.uk}\textsuperscript{1},
   Daniel Paulin \footnote{dpaulin@ed.ac.uk}\textsuperscript{1},
   Peter A. Whalley\footnote{p.a.whalley@sms.ed.ac.uk}\textsuperscript{1} \par \bigskip

  \textsuperscript{1}School of Mathematics, University of Edinburgh, Edinburgh EH9 2NX, Scotland \par
 

\end{center}



%
%
%

\begin{abstract}
In previous work, we introduced a method for determining convergence rates for integration methods for the kinetic Langevin equation for $M$-$\nabla$Lipschitz $m$-log-concave densities [arXiv:2302.10684, 2023].
In this article, we exploit this method to treat several additional schemes including the method of Brunger, Brooks and Karplus (BBK) and stochastic position/velocity Verlet.  We introduce a randomized midpoint scheme for kinetic Langevin dynamics, inspired by the recent scheme of Bou-Rabee and Marsden  [arXiv:2211.11003, 2022]. We also extend our approach to stochastic gradient variants of these schemes under minimal extra assumptions.  We provide convergence rates of $\mathcal{O}(m/M)$, with explicit stepsize restriction, which are of the same order as the stability thresholds for Gaussian targets and are valid for a large interval of the friction parameter. We compare the contraction rate estimates of many kinetic Langevin integrators from molecular dynamics and machine learning.  Finally we present numerical experiments for a Bayesian logistic regression example.
\end{abstract}

\keywords{\textbf{Key words.} Stochastic gradient, contractive numerical method, Wasserstein convergence, kinetic Langevin dynamics, underdamped Langevin dynamics, MCMC sampling, Brunger-Brooks Karplus, stochastic Verlet, Bayesian logistic regression, MNIST classification}

\subjclass{\textbf{AMS subject classifications.} 65C05, 	65C30, 65C40 }

\section{Introduction}

Efficient sampling of high dimensional probability distributions is required for applications such as Bayesian inference and molecular dynamics (see for example \cite{gelman2013bayesian}  and \cite{bond2007molecular,leimkuhler2015molecular}). A popular approach is to employ a Markov chain constructed by discretizing a stochastic differential equation (SDE) and to approximate observable averages using the central limit theorem.  Some common choices of SDEs include overdamped Langevin dynamics \cite{rossky1978brownian,besag1994discussion,roberts1996exponential} and  underdamped/kinetic Langevin dynamics (see below). Other popular methods for MCMC are based on combining Hamiltonian dynamics with Metropolis-Hastings accept/reject condition or combining simple dynamics with stochastic refreshments, which can be simulated exactly \cite{peters2012rejection,bouchard2018bouncy,bierkens2019zig}. 

The focus of this article is on kinetic Langevin dynamics which is the stochastic differential equation system defined by
\begin{equation}\label{eq:underdamped_langevin}
    \begin{split}
    dX_{t} &= V_{t}dt,\\
    dV_{t} &= -\nabla U(X_{t}) dt - \gamma V_{t}dt + \sqrt{2\gamma}dW_{t},
\end{split}
\end{equation}
where $X_{t}, V_{t} \in \mathbb{R}^{n}$, $U$ is a ``potential energy'' function and may be taken to be $-\log \pi$ for a suitable target density $\pi$, $\gamma > 0$ is a friction parameter and $W_{t}$ is the driving $n$-dimensional Brownian motion. It can be shown under mild assumptions on the potential $U$ that the invariant measure of this process is proportional to $\exp{\left(-U(X) - \frac{1}{2}\|V\|^{2} \right)}$ \cite{pavliotis2014stochastic}.  Particle masses and a temperature parameter are typically included in the context of molecular dynamics, which we neglect here in order to simplify the presentation of results; if desired our analysis could easily be modified to include them.  Overdamped Langevin dynamics is the high friction limit of kinetic Langevin dynamics following a time rescaling \cite{pavliotis2014stochastic}.

In this article we focus our attention to proving convergence rates of numerical methods for kinetic Langevin sampling in Wasserstein distance (see \cite{vaserstein1969markovian}). We use coupling methods to establish these convergence rates (see \cite{griffeath1975maximal}), more specifically synchronous coupling as in \cite{leimkuhler2023contraction,sanz2021wasserstein,dalalyan2020sampling,cheng2018underdamped,monmarche2022hmc,monmarche2021high}. Demonstrating that a certain coupling leads to contraction has become a widely used method for demonstrating convergence in terms of Wasserstein distance in both continuous-time scenarios and when discretizing Langevin dynamics and Hamiltonian Monte Carlo \cite{bou2020coupling,bou2022unadjusted,bou2023mixing,eberle2019couplings,deligiannidis2021randomized,riou2022metropolis,schuh2022global,leimkuhler2023contraction}. The numerical methods we consider are the Euler-Maruyama discretisation (EM), the Brunger-Brooks Karplus discretisation (BBK) \cite{brunger1984stochastic}, the stochastic position and velocity Verlet (SPV, SVV) \cite{melchionna2007design}, popular splitting methods including BAOAB and OBABO \cite{PhysRevE.75.056707,leimkuhler2013rational}, a randomized method based on the Hamiltonian integrator of \cite{bou2022unadjusted} (rOABAO) and the stochastic Euler scheme (SES/EB) \cite{chandrasekhar1943stochastic,ermak1980numerical,skeel2002impulse}.

When considering MCMC methods the performance of a sampling scheme is often assessed by measuring the number of steps needed to achieve a certain level of accuracy in the Wasserstein distance metric.  By combining the results of this paper  with  estimates of the stepsize-dependent bias of the numerical methods, it is possible to develop such non-asymptotic bounds in Wasserstein distance which can ultimately provide insight into the computational complexity, convergence rate, and accuracy of the sampling scheme.    Bias estimates of some relevant numerical methods have been treated in \cite{monmarche2021high,monmarche2022hmc,sanz2021wasserstein}.

The aim of this article is to extend the results of \cite{leimkuhler2023contraction} to obtain Wasserstein convergence estimates for a wide interval of the friction parameter whilst maintaining reasonable assumptions on the stepsize. We also propose here a new sampling scheme based on the randomized midpoint method of \cite{bou2022couplings} for Hamiltonian Monte Carlo and we provide convergence results. Moreover, we discuss the use of stochastic gradients and how these proofs can be extended to that setting, which is particularly important in the context of machine learning. We demonstrate our results on an anisotropic Gaussian as well as a Bayesian logistic regression problem involving the MNIST dataset. Although we only treat convergence towards the invariant measure of the scheme in this article, we demonstrate the bias of the methods and discuss these results in combination with the convergence results achieved. We verify the convergence results for the anisotropic Gaussian example, by computing spectral gaps for the numerical methods.

\begin{table}[H]
\begin{center}
\begin{tabular}{ |c|c|c| } 
\hline
Algorithm & stepsize restriction & one-step contraction rate \\
\hline
EM  & $\mathcal{O}(1/\gamma)$ & $\mathcal{O}(mh/\gamma)$\\
BBK & $\mathcal{O}(1/\gamma)$ & $\mathcal{O}(mh/\gamma)$\\
SPV & $\mathcal{O}(1/\gamma)$ & $\mathcal{O}(mh/\gamma)$\\
SVV & $\mathcal{O}(1/\gamma)$ & $\mathcal{O}(mh/\gamma)$\\
BAOAB &$\mathcal{O}((1 - \eta)/\sqrt{M})$ \cite{leimkuhler2023contraction} & $\mathcal{O}(mh^{2}/(1-\eta))$ \cite{leimkuhler2023contraction,monmarche2022hmc} \\ 
OBABO &$\mathcal{O}((1 - \eta)/\sqrt{M})$ \cite{leimkuhler2023contraction}& $\mathcal{O}(mh^{2}/(1-\eta))$ \cite{leimkuhler2023contraction} \\ 
rOABAO &$\mathcal{O}((1 - \eta)/\sqrt{M})$ & $\mathcal{O}(mh^{2}/(1-\eta))$ \\ 
SES/EB &$\mathcal{O}(1/\gamma)$ \cite{sanz2020contractivity,leimkuhler2023contraction} & $\mathcal{O}(mh/\gamma)$ \cite{sanz2020contractivity,leimkuhler2023contraction,dalalyan2020sampling} \\
\hline
\end{tabular}

\end{center}
\caption{The table provides our stepsize restrictions and optimal contraction rates of the discretized kinetic Langevin dynamics with stepsize $h$ for an $m$-convex, $M$-$\nabla$ Lipschitz potential and previous results of \cite{leimkuhler2023contraction} and other recent work for further integrators for comparison. We define $\eta = e^{-\gamma h}$.}
\label{Table:results}
\end{table}

Using full gradients at each iteration can be computationally expensive in case of  large datasets. The predominant approach used in machine learning optimization is to rely on stochastic approximation of the gradient (see e.g. \cite{robbins1951stochastic} for one of the first applications of such approaches). In the context of sampling, there has been a great deal of interest to also use such ideas to improve the scalability of MCMC to large datasets, see e.g. \cite{welling2011bayesian}, or the recent review paper \cite{nemeth2021stochastic}.
Our contribution here is to generalize the contraction rates for all schemes in Table \ref{Table:results} to appropriate versions of these schemes using stochastic dynamics (see Table \ref{Table:stoc_constants} for a summary of results). 
We allow for a flexible choice of unbiased gradient estimators (i.e they do not necessarily have to be based on subsampling) and control errors via expected variability in the Jacobian of the stochastic gradient versus the Hessian of the true potential. It turns out that for all schemes, there is some reduction in convergence rate as the gradient noise increases (we observed this in our numerical experiments when using sub-sampling with very small batch sizes). Nevertheless, for a fixed level of gradient noise, the relative reduction in the contraction rate due to stochastic gradients becomes negligible as the stepsize decreases.

\section{Assumptions and definitions}
\subsection{Assumptions on the potential} \label{sec:assumptions}
We place assumptions on the target measure and the resulting potential $U$. These are strong but allow us to easily obtain quantitative convergence rates.    We assume that the potential has a $M$-Lipschitz gradient and is $m$-strongly convex, which is equivalent to the following assumptions on the Hessian of $U$:

\begin{assumption}[$M$-$\nabla$Lipschitz and $m$-convex] For all $x \in \mathbb{R}^{n}$ there exists $m$ and $M$ such that  $0 < m < M < \infty$, and
\[
m I_{n} \prec \nabla^{2}U(x) \prec M I_{n}.
\]
\end{assumption}

These assumptions are widely used in analysis of optimisation and sampling methods for gradient descent (see \cite{boyd2004convex,dalalyan2017theoretical,durmus2017nonasymptotic}). Analysis of sampling algorithms in the non-convex setting has also been studied (see \cite{cao2019explicit,eberle2019couplings,majka2020nonasymptotic}).

\subsection{Modified Euclidean Norms}\label{Sec:Quadratic_Norm}
We introduce a modified Euclidean norm as in \cite{monmarche2021high} to establish convergence of the discretizations of kinetic Langevin dynamics. It is not possible to establish convergence using the standard Euclidean norm.  We introduce the modified Euclidean norm for $z = (x,v) \in \mathbb{R}^{2n}$ 
\[
\left|\left| z \right|\right|^{2}_{a,b} = \left|\left| x \right|\right|^{2} + 2b \left\langle x,v \right\rangle + a \left|\left| v \right|\right|^{2},
\]
for $a,b > 0$ which is equivalent to the Euclidean norm as long as $b^{2}<a$ with explicit constants given by
\[
\frac{1}{2}\|z\|^{2}_{a,0} \leq \|z\|^{2}_{a,b} \leq \frac{3}{2}\|z\|^{2}_{a,0}.\]

\subsection{Wasserstein Distance} \label{sec:wasserstein_def}

We introduce a notion of distance between probability measures to measure convergence. The metric we
consider is the $p$-Wasserstein distance on $\mathcal{P}_{p}\left(\mathbb{R}^{2n}\right)$ the space of probability measures with finite $p$-th moment, for $p \in \left[0,\infty\right)$. For probability measures $\mu, \nu \in \mathcal{P}_{p}\left(\mathbb{R}^{2n}\right)$ the $p$-Wasserstein distance with respect to the modified norm $\|\cdot\|_{a,b}$ (introduced in Section \ref{Sec:Quadratic_Norm}) is defined by
\[\mathcal{W}_{p,a,b}\left(\nu,\mu\right) = \left( \inf_{\xi \in \Gamma\left( \nu, \mu \right)}\int_{\mathbb{R}^{2d}}\|z_{1} - z_{2}\|^{p}_{a,b}d\xi\left(z_{1},z_{2}\right)\right)^{1/p},\]
where $\Gamma\left(\mu,\nu\right)$ is the set of all couplings between $\mu$ and $\nu$ (the set of joint probability measures with marginals $\mu$ and $\nu$).

If one is able to show contraction of any coupling of two paths of a numerical integrator for kinetic Langevin dynamics then one also obtains convergence in $p$-Wasserstein distance, as it is the infimum over all such couplings (see \cite{cheng2018convergence}[Corollary 7] or \cite{monmarche2021high}[Corollary 20]). The coupling technique we consider for quantitative contraction rates is synchronous coupling, where the two paths are generated using identical noise increments. This coupling is also used in \cite{cheng2018convergence,monmarche2020almost,dalalyan2020sampling}. 

\begin{proposition}\label{prop:Wasserstein}\cite{leimkuhler2023contraction}[Propostion 2.3]
Assume a numerical scheme for kinetic Langevin dynamics with a $m$-strongly convex $M$-$\nabla$Lipschitz potential $U$ and transition kernel $P_{h}$. If any two synchronously coupled chains with initial conditions $\left(x_{0},v_{0}\right)\in \mathbb{R}^{2n}$ and $\left(\Tilde{x}_{0},\Tilde{v}_{0}\right) \in \mathbb{R}^{2n}$ of the numerical scheme have the contraction property 
\begin{equation}\label{eq:contraction_inequality}
    \|(x_{k} - \Tilde{x}_{k},v_{k} - \Tilde{v}_{k})\|^{2}_{a,b} \leq C(1 - c\left(h\right))^{k}\|(x_{0} - \Tilde{x}_{0},v_{0} - \Tilde{v}_{0})\|^{2}_{a,b},
\end{equation}
for $\gamma^{2} \geq C_{\gamma}M$, $h \leq C_{h}\left(\gamma,\sqrt{M}\right)$ and $a,b >0$ such that $b^{2} > a$. Then we have that for all $\gamma^{2} \geq C_{\gamma}M$, $h \leq C_{h}\left(\gamma,\sqrt{M}\right)$, $1 \leq p \leq \infty$, $\mu,\nu \in \mathcal{P}_{p}(\mathbb{R}^{2n})$, and  all $k \in \mathbb{N}$,
\[
\mathcal{W}^{2}_{p}\left(\nu P^k_{h} ,\mu P^k_{h} \right) \leq 3C\max{\left\{a,\frac{1}{a}\right\}}\left(1 - c\left(h\right)\right)^{k} \mathcal{W}^{2}_{p}\left(\nu,\mu\right).
\]
Further to this, $P_{h}$ has a unique invariant measure which depends on the stepsize, $\pi_{h}$, where $\pi_{h} \in \mathcal{P}_{p}(\mathbb{R}^{2n})$ for all $1 \leq p \leq \infty$.
\end{proposition}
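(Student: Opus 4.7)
The plan is to lift the almost sure contraction \eqref{eq:contraction_inequality} along synchronously coupled trajectories first to a contraction of the $\|\cdot\|_{a,b}$-Wasserstein distance, and then to translate it to a bound on the standard $p$-Wasserstein distance via the norm equivalences from Section~\ref{Sec:Quadratic_Norm}. Existence and uniqueness of $\pi_h$ will then follow from Banach's fixed point theorem on a complete metric space.

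First, I would fix $\mu,\nu\in\mathcal{P}_p(\mathbb{R}^{2n})$ and an optimal coupling $\xi_0\in\Gamma(\mu,\nu)$ for $\mathcal{W}_{p,a,b}$ (existence is standard on Polish spaces), then propagate the two chains using identical Brownian increments at each step. The joint law $\xi_k$ after $k$ steps is a coupling of $\nu P_h^k$ and $\mu P_h^k$. Raising \eqref{eq:contraction_inequality} to the $p/2$-th power almost surely, integrating against $\xi_0$, and then taking the $(2/p)$-th power yields
\[
\mathcal{W}_{p,a,b}^{2}\bigl(\nu P_h^k,\mu P_h^k\bigr)\;\le\;\left(\int\|z_1-z_2\|^p_{a,b}\,d\xi_k\right)^{2/p}\;\le\;C\bigl(1-c(h)\bigr)^{k}\,\mathcal{W}_{p,a,b}^{2}(\nu,\mu),
\]
where the first inequality is the infimum definition of $\mathcal{W}_{p,a,b}$ applied to the coupling $\xi_k$ and the second uses the pointwise bound together with optimality of $\xi_0$. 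The case $p=\infty$ is handled identically with $\xi$-essential suprema in place of $\xi$-integrals.

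Next, I would combine the two pointwise sandwich estimates $\tfrac12\|z\|^2_{a,0}\le\|z\|^2_{a,b}\le\tfrac32\|z\|^2_{a,0}$ from Section~\ref{Sec:Quadratic_Norm} with $\min(1,a)\|z\|^2\le\|z\|^2_{a,0}\le\max(1,a)\|z\|^2$. Since these hold pointwise, they lift directly to the associated Wasserstein distances, giving $\mathcal{W}_p^2\le 2\max(1,1/a)\,\mathcal{W}_{p,a,b}^2$ and $\mathcal{W}_{p,a,b}^2\le \tfrac32\max(1,a)\,\mathcal{W}_p^2$. Chaining these with the contraction from the previous step produces the stated constant $3C\max\{a,1/a\}$.

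Finally, $(\mathcal{P}_p(\mathbb{R}^{2n}),\mathcal{W}_{p,a,b})$ is a complete metric space, as it is topologically equivalent to $(\mathcal{P}_p,\mathcal{W}_p)$, and the contraction property just established makes $P_h$ a strict contraction on it. Banach's fixed point theorem then produces a unique invariant measure $\pi_h$, and iterating $P_h$ from any Dirac mass shows $\pi_h\in\mathcal{P}_p$ for every $p\in[1,\infty]$, because the one-step transition has at most affine growth and adds independent Gaussian noise (so the moments of every order are preserved, and Cauchy iterates converge to $\pi_h$ in $\mathcal{W}_p$). The only real bookkeeping issue is tracking the constants through the two norm equivalences; the contraction itself is imported as a hypothesis and so is not an obstacle.
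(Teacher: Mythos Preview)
Your argument is correct and is essentially a spelled-out version of what the paper defers to via citations (Monmarch\'e's Corollary~20 for passing from pathwise contraction to Wasserstein contraction, and Villani's results for existence of optimal couplings and completeness of the Wasserstein space). Two minor points to tighten: first, because of the prefactor $C$ in \eqref{eq:contraction_inequality}, $P_h$ itself need not be a strict contraction for $k=1$; what you actually obtain is that some iterate $P_h^k$ is, and the standard variant of Banach's theorem then still yields a unique common fixed point of $P_h$. Second, ``topologically equivalent'' is not sufficient to transfer completeness; you want to invoke the bi-Lipschitz equivalence of $\|\cdot\|_{a,b}$ and $\|\cdot\|$ (which you already have from Section~\ref{Sec:Quadratic_Norm}) to conclude $(\mathcal{P}_p,\mathcal{W}_{p,a,b})$ is complete.
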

\begin{proof}
 The proof is given in \cite{monmarche2021high}[Corollary 20], which relies on \cite{villani2009optimal}[Corollary 5.22, Theorem 6.18].

\end{proof}

\section{Proof Strategy}
We use the proof strategy introduced in \cite{leimkuhler2023contraction} to prove contraction of the numerical schemes considered in this article. We summarize the proof strategy and for further details we refer the reader to \cite{leimkuhler2023contraction}. Our method relies on proving contraction of a ``twisted Euclidean norm" or modified norm (as stated in Section \ref{Sec:Quadratic_Norm}) which is equivalent to the standard Euclidean norm with an explicit constant. The approach is to find constants $a, b > 0$ such that $b^{2} < a$ and the contraction property
\begin{equation}\label{eq:cont_1}
  \|\Tilde{z}_{k+1} - z_{k+1}\|^{2}_{a,b} < \left(1 - c\left(h\right)\right)\|\Tilde{z}_{k} - z_{k}\|^{2}_{a,b},  
\end{equation}
holds with explicit weak assumptions on the parameters $\gamma$ and $h$. Now this is equivalent to showing that
\begin{equation}\label{eq:contraction_matrix_form}
 \overline{z}^{T}_{k}\left(\left(1 - c\left(h\right)\right)M- P^{T}MP\right )\overline{z}_{k} > 0,    \quad \textnormal{where} \quad M = \begin{pmatrix}
    1 & b \\
    b & a
\end{pmatrix},
\end{equation}
and $\overline{z}_{k+1} = P\overline{z}_{k}$. $P$ is determined by the scheme and implicitly depends on $z_{k}$ and $\Tilde{z}_{k}$ through a mean value theorem and the Hessian of the potential. If this relation holds for any choice of $z_{k}$ and $\Tilde{z}_{k}$ then it implies contraction. Therefore proving contraction with a rate $c(h)$ is equivalent to proving that the matrix $\mathcal{H} :=  \left(1 - c(h)\right)M - P^{T}MP \succ 0$ is positive definite. We can use the symmetric structure of the  matrix $\mathcal{H}$ 
\begin{equation}\label{eq:contraction_matrix}
\mathcal{H} = \begin{pmatrix}
    A & B \\
    B & C
\end{pmatrix},    
\end{equation}
to show that $\mathcal{H}$ is positive definite by applying the following Prop. \ref{Prop:PD}.
\begin{proposition}[\cite{leimkuhler2023contraction}]\label{Prop:PD}
Let $\mathcal{H}$ be a symmetric matrix of the form (\ref{eq:contraction_matrix}), then $\mathcal{H}$ is positive definite if and only if $A \succ 0$ and $C - BA^{-1}B \succ 0$. Further if $A$, $B$ and $C$ commute then $\mathcal{H}$ is positive definite if and only if $A\succ 0$ and $AC - B^{2} \succ 0$.
\end{proposition}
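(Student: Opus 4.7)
The plan is to reduce both claims to the block Schur complement factorization. For the first equivalence, I would start by noting that if $\mathcal{H} \succ 0$, then testing against vectors of the form $(x,0)^T$ immediately yields $x^T A x > 0$, so $A \succ 0$ and in particular $A$ is invertible. This makes the following block LDU identity available:
\[
\mathcal{H} = L \begin{pmatrix} A & 0 \\ 0 & C - B A^{-1} B \end{pmatrix} L^T, \qquad L = \begin{pmatrix} I & 0 \\ B A^{-1} & I \end{pmatrix},
\]
which I would verify by a direct multiplication, using that $A$ and $B$ are symmetric (the latter because $\mathcal{H}$ is symmetric with identical off-diagonal blocks). Since $L$ is unit lower triangular and thus invertible, this is a congruence, so $\mathcal{H} \succ 0$ if and only if both block-diagonal factors are positive definite. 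This captures necessity and sufficiency simultaneously.

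For the second equivalence, I would invoke the standard fact that a commuting family of real symmetric matrices can be simultaneously orthogonally diagonalized: there exists an orthogonal $Q$ with $A = Q D_A Q^T$, $B = Q D_B Q^T$, and $C = Q D_C Q^T$, where $D_A, D_B, D_C$ are diagonal. Combined with the first part, positive definiteness of $\mathcal{H}$ is equivalent to $D_A \succ 0$ together with $D_C - D_B D_A^{-1} D_B \succ 0$. Since these are now diagonal matrices with $D_A$ having strictly positive entries, multiplying the second condition entrywise by $D_A$ gives the equivalent condition $D_A D_C - D_B^2 \succ 0$, which after conjugation by $Q$ is precisely $AC - B^2 \succ 0$.

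The argument is essentially textbook, and I do not anticipate a substantive obstacle. The only subtlety worth flagging explicitly is that the expression $AC - B^2$ is symmetric exactly when $A, B, C$ commute (so that $AC = CA = (AC)^T$), which is why the commutativity hypothesis is both natural and necessary for the statement of the second equivalence to even make sense as a positive-definiteness assertion.
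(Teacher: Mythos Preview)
Your proof is correct and follows the standard Schur-complement (block-LDU) route; the paper itself does not give a proof here but simply cites \cite{leimkuhler2023contraction}, so there is nothing substantive to compare against, though your argument is the textbook one and almost certainly coincides with what appears in that reference. One small quibble on your closing remark: since $B$ is already symmetric, $B^2$ is automatically symmetric, so symmetry of $AC - B^2$ only requires $AC = CA$; the full pairwise commutativity hypothesis is needed for the simultaneous diagonalization step in your argument, not merely to make the positive-definiteness statement well-posed.
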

\begin{proof}
Proof given in \cite{leimkuhler2023contraction}.
\end{proof}
\section{Numerical Integrators}\label{sec:numerical_schemes}
We will consider several popular numerical integrators for kinetic Langevin dynamics, arising in the literatures of molecular dynamics and  machine learning. The numerical methods are generally defined by $(x_k,v_k) \in \mathbb{R}^{n} \times \mathbb{R}^{n}$ for $k \in \mathbb{N}$ with initial conditions $(x_0,v_0) \in \mathbb{R}^{n} \times \mathbb{R}^{n}$ and given noise sequences.  

Choices for the algorithm include:
\begin{itemize}
    \item the Euler-Maruyama discretization (EM)
    \item splitting methods based on breaking the dynamics into parts which can be solved analytically (in the weak sense) \cite{PhysRevE.75.056707,leimkuhler2013rational,leimkuhler2016computation}
    \item the stochastic Euler scheme (SES/EB) (see \cite{chandrasekhar1943stochastic,ermak1980numerical,skeel2002impulse}), which is popular in the machine learning literature (see \cite{cheng2018underdamped, dalalyan2020sampling,sanz2021wasserstein}) and is based on keeping the force constant and integrating exactly over the interval
    \item the Brunger-Brooks Karplus (BBK) scheme which uses a leapfrog-like approach to propagate position and velocity components, combined with implicit and explicit Euler steps in velocity \cite{brunger1984stochastic,finkelstein2020comparison}
    \item the stochastic position and velocity Verlet schemes (SPV,SVV) based on integrating the force and the OU process together in a splitting scheme introduced in \cite{melchionna2007design}
    \item a new randomized midpoint method based on a Hamiltonian integrator from \cite{bou2022unadjusted}
\end{itemize}
We recommend \cite{durmus2021uniform} and \cite{finkelstein2020comparison} for an introduction to many of these schemes. We next describe these algorithms by giving their respective update rules.
\subsection{Euler-Maruyama}
The Euler-Maruyama discretization with initial conditions $(x_{0},v_{0}) \in \mathbb{R}^{2n}$ and iterations $(x_{k},v_{k}) \in \mathbb{R}^{2n}$ for $k \in \mathbb{N}$ are defined by the update rule
\begin{equation}
\begin{split}
x_{k+1} & =  x_k + hv_k,\\
v_{k+1} & = v_k - h\nabla U(x_k) - h\gamma v_k + \sqrt{2\gamma h}\xi_{k+1},
\end{split}
\end{equation}
where $(\xi_k)_{k\in\mathbb{N}}$ are independent ${\mathcal N}(0,1)$ draws. 
\subsection{Splitting Methods}
Integrators studied in \cite{PhysRevE.75.056707} and \cite{leimkuhler2013rational} are defined by splitting the dynamics into parts given by $\mathcal{B}$ (integrating the velocity by the force), $\mathcal{A}$ (integrating the position by the velocity) and $\mathcal{O}$ (the solution in the weak sense to the OU process) with update rules given by
\begin{equation}\label{eq:BAO}
\begin{split}
    &\mathcal{B}: v \to v - h\nabla U(x), \\
    &\mathcal{A}: x  \to x + hv,\\
    &\mathcal{O}: v \to \eta v + \sqrt{1 - \eta^{2}}\xi,
\end{split}  
\end{equation}
where 
\[
\eta := \exp{\left(-\gamma h \right)}.
\] 
Then the schemes we will study in this framework are BAOAB and OBABO (the ordering given from left to right based on their application as operators defined in \eqref{eq:BAO}). When there is a repeated letter in the ordering that means that each operator is taken with half a step, i.e. $h \to h/2$, $\eta \to \eta^{1/2}$. For example BAOAB performs two half steps of $\mathcal{B}$ and $\mathcal{A}$ and one full step of $\mathcal{O}$. For computational efficiency, in practice the same gradient evaluation for the last $\mathcal{B}$ step in a BAOAB iteration is used for the first $B$ step in the next iteration, as the position is not updated.

\subsection{Stochastic Euler Scheme}
The stochastic Euler scheme is based on keeping the force constant and integrating the dynamics exactly over the time interval. For initial conditions $(x_{0},v_{0}) \in \mathbb{R}^{2n}$, the iterations $(x_{k},v_{k}) \in \mathbb{R}^{2n}$ for $k \in \mathbb{N}$ are defined by the update rule
\begin{equation}\label{eq:SES}
    \begin{split}
    x_{k+1} &= x_{k} + \frac{1-\eta}{\gamma}v_{k} - \frac{\gamma h + \eta -1}{\gamma^{2}}\nabla U\left(x_{k}\right) + \zeta_{k+1},\\
    v_{k+1} &= \eta v_{k} - \frac{1 - \eta}{\gamma} \nabla U\left(x_{k}\right) + \omega_{k+1},
    \end{split}
\end{equation}
where $\eta := \exp{\left(-\gamma h \right)}$ and 
\[
\zeta_{k+1} = \sqrt{2\gamma}\int^{h}_{0}e^{-\gamma\left( h - s\right)}dW_{h\gamma + s}, \qquad \omega_{k+1} = \sqrt{2\gamma} \int^{h}_{0} \frac{1 - e^{-\gamma\left( h - s\right)}}{\gamma}dW_{h\gamma + s}.
\]
$\left(\zeta_{k},\omega_{k}\right)_{k \in \mathbb{N}}$ are i.i.d Gaussian random vectors with covariances matrix $\Sigma \otimes I_{n}$ with $\Sigma$ given by
\begin{equation}\label{eq:SES_covariance}
\Sigma = \begin{pmatrix}
    \Sigma_{1} & \Sigma_{2} \\
    \Sigma_{2} & \Sigma_{3}
\end{pmatrix},
\end{equation}
where
\begin{align*}
    \Sigma_{1} &= \frac{1}{\gamma}\left(2h - \frac{3 - 4\eta + \eta^{2}}{\gamma}\right),\\
    \Sigma_{2} &= \frac{1}{\gamma}\left(1-\eta\right)^{2},\\
    \Sigma_{3} &= 1 - \eta^{2},\\
\end{align*}
as defined in \cite{durmus2021uniform}. 

All of the schemes mentioned so far were studied in \cite{leimkuhler2023contraction}.  We now introduce some additional schemes, which include the BBK integrator \cite{brunger1984stochastic} which is popular in molecular dynamics and the stochastic position and velocity Verlet methods \cite{melchionna2007design}.  

\subsection{BBK}
For initial conditions $(x_{0},v_{0}) \in \mathbb{R}^{2n}$, the iterations $(x_{k},v_{k}) \in \mathbb{R}^{2n}$ for $k \in \mathbb{N}$ of the BBK method of \cite{brunger1984stochastic} are defined by the update rule
\begin{align*}
    x_{k+1} &= x_{k} + h(1-\frac{\gamma h}{2})v_{k} - \frac{h^{2}}{2}\nabla U(x_{k}) + \sqrt{2\gamma}\frac{h^{3/2}}{2}\xi_{k},\\
    v_{k+1} &= \frac{1 - \gamma h /2}{1 + \gamma h / 2}v_{k} - \frac{h}{2(1 + \gamma h/2)}\left(\nabla U(x_{k}) + \nabla U(x_{n+1})\right) + \frac{\sqrt{2\gamma h}}{2(1 + \gamma h/2)}(\xi_{k+1} + \xi_{k}),
\end{align*}
which can be rewritten as \cite{finkelstein2020comparison}
\begin{align*}
    &(B_{1}) \qquad v_{k+1/2} = v_{k} + \frac{h}{2}\left(-\nabla U(x_{k}) - \gamma v_{k} + \frac{\sqrt{2\gamma}}{\sqrt{h}}\xi_{k}\right),\\
    &(A) \qquad x_{k+1} = x_{k} + hv_{k+1/2},\\
    &(B_{2}) \qquad v_{k+1} = v_{k+1/2} + \frac{h}{2}\left(-\nabla U(x_{k+1}) - \gamma v_{k+1} + \frac{\sqrt{2\gamma}}{\sqrt{h}}\xi_{k+1}\right).
\end{align*}
This can be viewed as an explicit Euler step followed by a position update followed by an implicit Euler step. We denote the explicit Euler step by $B_{1}$ and the implicit Euler step by $B_{2}$

\subsection{Stochastic Position and Velocity Verlet}
The stochastic position and velocity Verlet schemes are defined through an alternative splitting of the dynamics based on keeping the $B$ and $O$ steps together in an exact integration. We define the operators involved in the update rule by
\begin{align*}
    \mathcal{V}_{s}(h)&: v \to \eta v - \frac{1 - \eta}{\gamma} \nabla U(x) + \sqrt{1 - \eta^{2}} \xi,\\
    \mathcal{A}(h)&: x \to x + h v,
\end{align*}
where $\eta = e^{-\gamma h}$. Then the stochastic position Verlet is defined by $\mathcal{A}(h/2)\mathcal{V}_{s}(h)\mathcal{A}(h/2)$ and the stochastic velocity Verlet is defined by $\mathcal{V}_{s}(h/2)\mathcal{A}(h)\mathcal{V}_{s}(h/2)$.

\subsection{Randomized midpoint method}
Other algorithms for kinetic Langevin dynamics include the randomized midpoint methods considered in \cite{shen2019randomized} and analyzed in \cite{cao2020complexity}, which have improved dimension dependence in non-asymptotic estimates. However they involve multiple gradient evaluations at each step and are not able to be analyzed in our framework; this problem has been discussed in \cite{sanz2021wasserstein}. For contractivity of algorithms involving several gradient evaluations we refer the reader to \cite{sanz2020contractivity}.

In the recent paper \cite{bou2022unadjusted} the authors consider such a method for Hamiltonian Monte Carlo, whose discretization is closely related to the OBABO or OABAO discretization in the $\gamma \to \infty$ limit \cite{monmarche2022hmc}. More precisely one could consider the following procedure.   

Fix a stepsize $h$ then sample $u \sim [0,h]$ and compute
\begin{align*}
    &\mathcal{A}: x \to x + uv, \\
    &\mathcal{B}: v \to v - h\nabla U(x),\\
    &\mathcal{A}: x \to x + (h-u)v,
\end{align*}
which is the following update
\begin{align*}
    x_{k+1} &= x_{k} + h v_{k} -h(h-u)\nabla U(x_{k} + uv_{k}),\\
    v_{k+1} &= v_{k} - h\nabla U(x_{k} + uv_{k}),
\end{align*}
then only considering the randomness in the gradient evaluation we arrive at the Verlet scheme considered in \cite{bou2022unadjusted}. We define $r\mathcal{ABA}$ to be the update
\begin{align*}
    x_{k+1} &= x_{k} + hv_{k}  - \frac{h^{2}}{2}\nabla U (x_{k} + uv_{k}),\\
    v_{k+1} &= v_{k} - h\nabla U (x_{k} + u v_{k}),
\end{align*}
where $u \sim \mathcal{U}(0,h)$ as introduced in \cite{bou2022unadjusted}. The key difference being that the gradient is evaluated at a random midpoint in the interval of numerical integration. We define the kinetic Langevin dynamics integrator rOABAO to be $\mathcal{O}\left(r\mathcal{ABA} \right)\mathcal{O}$.

We remark that we can achieve contraction rates by  coupling two trajectories which have common noise (in Brownian increment and randomized midpoint) $\left(\zeta_{k},u_{k}\right)_{k \in \mathbb{N}}$
with the previously introduced methods. The convergence rates will be established in Section \ref{Sec:Proofs}.

\section{Convergence Rates}

\begin{theorem}\label{Theorem:Total_theorem}
For the numerical schemes for kinetic Langevin dynamics given in Table \ref{Table:constants} with an $m$-strongly convex, $M$-$\nabla$Lipschitz potential $U$ we consider any sequence of synchronously coupled random variables with initial conditions $(x_{0},v_{0}) \in \mathbb{R}^{2n}$ and $(\Tilde{x}_{0},\Tilde{v}_{0}) \in \mathbb{R}^{2n}$.

\begin{table}[h]

\begin{center}
\begin{tabular}{ |c|c|c|c|c|c|c| } 
\hline
{\rm Algorithm} & $h_{0}$ & $\gamma_{0}$  & b & c(h) & C & s\\
\hline
{\rm EM}  & $1/2\gamma $ & $2\sqrt{M}$  & $1/\gamma$ & $mh/2\gamma$ & $1$ & $k$\\
{\rm BBK} & $1/4\gamma$ & $\sqrt{12M}$ & $h/2 + 1/\gamma$ & $mh/4\gamma$ & $7$ & $k-1$\\
{\rm SPV} & $1/2\gamma$ & $\sqrt{11M}$ & $h/(1-\eta)$ & $mh/4\gamma$ & $7$ & $k-1$\\
{\rm SVV} & $1/2\gamma$ & $\sqrt{11M}$ & $h/(1-\eta)$ & $mh/4\gamma$ & $7$ & $k-1$\\
{\rm BAOAB} & $(1-\eta)/2\sqrt{M}$& \textnormal{implicit} & $h/(1-\eta)$& $h^{2}m/4(1-\eta)$&$7$ & $k - 1$\\
{\rm OBABO} &$(1-\eta)/4\sqrt{M}$& \textnormal{implicit}  & $h/(1-\eta)$ & $h^{2}m/4(1-\eta)$ &$7$ & $k-1$\\
{\rm rOABAO} & $(1-\eta)/2\sqrt{M}$ & \textnormal{implicit} & $h/(1-\eta)$ & $h^{2}m/4(1-\eta)$ & $7$ & $k-1$\\ 
{\rm SES/EB} & $1/2\gamma$ & $5\sqrt{M}$ & $1/\gamma$ & $mh/4\gamma$ & $1$ & $k$\\
\hline
\end{tabular}
\end{center}

\caption{Constants for contraction of each scheme for contraction. Implicit refers to the implicit assumption on $\gamma$ through the stepsize restriction $h_{0}$, the value of $\gamma^2$ must be greater than a certain constant multiple of $M$. For example for $h_{0} = (1-\eta)/\alpha\sqrt{M}$ is satisfied when $\gamma \geq 2\alpha\sqrt{M}$ and $h <1/(2\gamma)$ and for $h \geq 1/(2\gamma)$ we have $h_{0} \geq 1/(6\alpha\sqrt{M})$.}
\label{Table:constants}
\end{table}

Under stepsize restrictions $h < h_{0}$ and $\gamma \geq \gamma_{0}$ for constants given in Table \ref{Table:constants} we have the contraction 
\[
\|(x_{k} - \Tilde{x}_{k},v_{k} - \Tilde{v}_{k})\|_{a,b} \leq C(1-c(h))^{s/2}\|(x_{0} - \Tilde{x}_{0},v_{0} - \Tilde{v}_{0})\|_{a,b},
\]
with norm given with constants $a = 1/M$ and $b$, where $b$, the contraction rate $c(h)$, the preconstant $C$ and the number of steps $s$ are given in Table \ref{Table:constants} and are specific to each scheme.

\end{theorem}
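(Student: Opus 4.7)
The plan is to apply the proof strategy summarised in Section 3 separately to each integrator listed in Table \ref{Table:constants}. For each scheme the one-step update for the difference $\overline{z}_k := (x_k - \tilde{x}_k, v_k - \tilde{v}_k)$ under synchronous coupling can be written, via the mean-value theorem $\nabla U(x_k) - \nabla U(\tilde{x}_k) = Q_k (x_k - \tilde{x}_k)$ with $Q_k := \int_0^1 \nabla^2 U(\tilde{x}_k + t(x_k-\tilde{x}_k))\,dt$ satisfying $mI_n \preceq Q_k \preceq MI_n$, as $\overline{z}_{k+1} = P_k \overline{z}_k$, where $P_k$ is a $2\times 2$ block matrix whose entries are polynomials (or rational functions, for the implicit BBK step) in $h$, $\gamma$, $\eta = e^{-\gamma h}$ and the single symmetric matrix $Q_k$. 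By Proposition \ref{prop:Wasserstein}, contraction in the twisted norm $\|\cdot\|_{a,b}$ with $a=1/M$ is equivalent to the positive-definiteness of
\[
\mathcal{H} \;=\; (1-c(h))\,(M_{a,b}\otimes I_n)\;-\;P_k^T(M_{a,b}\otimes I_n) P_k \;=\; \begin{pmatrix} A & B \\ B & C \end{pmatrix},
\]
and since every block is a polynomial/rational function of $Q_k$ alone, the blocks commute; hence by Proposition \ref{Prop:PD} it suffices to verify $A \succ 0$ and $AC - B^2 \succ 0$.

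First I would assemble $P_k$ for each integrator. For EM and SES/EB the update is already explicit. For BBK, I would first eliminate the implicit velocity step by solving for $v_{k+1}$ in terms of $v_{k+1/2}$ with the factor $(1+\gamma h/2)^{-1}$, then compose $B_2 \circ A \circ B_1$. For SPV and SVV, I would apply $\mathcal{A}(h/2)$, $\mathcal{V}_s(h)$, $\mathcal{A}(h/2)$ (and the velocity-first analogue) as linear operators, recording the contribution $(1-\eta)\gamma^{-1} Q_k$ of the force. For BAOAB and OBABO I would reuse the propagation matrices from \cite{leimkuhler2023contraction}. For rOABAO I would treat the randomized midpoint $u \sim \mathcal{U}(0,h)$ as frozen, derive a conditional one-step identity, and then take expectation over $u$ at the level of the norm bound, exploiting the independence of $(u_k)_{k\in\mathbb{N}}$.

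Second I would verify the two positivity conditions with the values of $b$ from Table \ref{Table:constants}. For the $\mathcal{O}(mh/\gamma)$ schemes the scaling $b \asymp 1/\gamma$ (equivalently $h/(1-\eta)$ when $\gamma h \lesssim 1$) keeps $A$ close to its $Q=0$ value, forces $B$ to be of order $h$, and reduces $AC-B^2 \succ 0$ to a polynomial inequality in the scalar eigenvalues $\lambda \in [m,M]$ of $Q_k$. The stepsize and friction cut-offs $h_0$ and $\gamma_0$ in the table are tuned precisely so that the worst-case residual in these polynomial inequalities is absorbed into the stated rate $c(h)$. The splitting cases BAOAB/OBABO/rOABAO require the $\mathcal{O}(mh^2/(1-\eta))$ rate, and the choice $b = h/(1-\eta)$ is essential there, exactly as in \cite{leimkuhler2023contraction}. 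Finally, iterating the one-step bound and comparing $\|\cdot\|_{a,b}$ to $\|\cdot\|_{a,0}$ via the equivalence constants $1/2$ and $3/2$ from Section \ref{Sec:Quadratic_Norm} yields the preconstant $C=7$ and the exponent $s=k-1$ for those schemes where $b$ depends on $h$ and one extra step is needed to generate the mixed $\langle x,v\rangle$ term; for EM and SES/EB we get $C=1$, $s=k$.

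The main obstacle is verifying $AC - B^2 \succ 0$ uniformly over $\lambda \in [m,M]$ for the schemes with implicit or integrated-OU substeps (BBK, SPV, SVV, SES/EB), where the entries of $P_k$ couple exponentials $\eta$ with rational denominators such as $(1+\gamma h/2)^{-1}$. Calibrating the precise friction constants $\sqrt{12M}$, $\sqrt{11M}$, $5\sqrt{M}$ in Table \ref{Table:constants} amounts to bounding lower-order terms using $h\le h_0$ tightly enough that the target rate $c(h)$ survives, and for rOABAO the extra subtlety is that this polynomial inequality must hold only after averaging over the random midpoint $u$ rather than pointwise, which requires computing $\mathbb{E}_u[P_k^T (M_{a,b}\otimes I_n) P_k]$ explicitly before comparing to $(1-c(h))(M_{a,b}\otimes I_n)$.
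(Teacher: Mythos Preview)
Your overall framework is right, but there is a structural gap that would block the argument for BBK and SVV (and is the actual source of the constants $C=7$, $s=k-1$ that you attribute to norm equivalence).

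When you compose the full BBK step $B_2\circ A\circ B_1$, the gradient is evaluated at \emph{two different} positions, $x_k$ in $B_1$ and $x_{k+1}$ in $B_2$. The mean-value theorem then produces two distinct symmetric matrices $Q_1,Q_2$ with $mI\preceq Q_i\preceq MI$, and the blocks $A,B,C$ of $\mathcal H$ become polynomials in \emph{both} $Q_1$ and $Q_2$. These do not commute in general, so Proposition~\ref{Prop:PD} in its commuting form is unavailable, and the reduction to scalar polynomial inequalities in a single eigenvalue $\lambda\in[m,M]$ collapses. The same happens for SVV, where the two $\mathcal V_s(h/2)$ substeps evaluate the force at $x_k$ and at $x_{k+1}$. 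The paper avoids this by rewriting, e.g.\ for BBK,
\[
(\Phi_{\mathrm{BBK}})^{k}=\Phi_{B_2}\circ\Phi_A\circ(\Phi_{B_1}\circ\Phi_{B_2}\circ\Phi_A)^{k-1}\circ\Phi_{B_1},
\]
and proving contraction only for the core $\Phi_{B_1}\circ\Phi_{B_2}\circ\Phi_A$, in which both $B$-steps act at the \emph{same} position $x_k+hv_k$ and hence share a single $Q$. For SVV one uses $(\mathcal V_s\mathcal A\mathcal V_s)^k=\mathcal V_s(\mathcal A\mathcal V_s\mathcal V_s)^{k-1}\mathcal A\mathcal V_s$ together with $\mathcal V_s(h/2)\mathcal V_s(h/2)=\mathcal V_s(h)$. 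The preconstant $C=7$ and the exponent $s=k-1$ are then \emph{not} norm-equivalence artefacts but the cost of bounding the head and tail maps (e.g.\ $\|\Phi_{B_2}\circ\Phi_A(\cdot)\|^2_{a,b}\le 7\|\cdot\|^2_{a,b}$) that sit outside the $(k-1)$-fold contracting core.

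For rOABAO there is a second discrepancy: the theorem asserts a pathwise bound for synchronously coupled chains, where ``synchronous'' includes the shared midpoint sequence $(u_k)$. The paper therefore proves $AC-B^2\succ 0$ for every fixed $u\in[0,h]$, using only $u\le h$ in the estimates. Your plan to average $P_k^T(M_{a,b}\otimes I_n)P_k$ over $u$ would give contraction only in expectation over the midpoint randomness, which is weaker than what is stated (though it would still suffice for Wasserstein convergence along the lines of Proposition~\ref{prop:Wasserstein_stoch}).
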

The proofs for EM, BAOAB, OBABO, SES are given in \cite{leimkuhler2023contraction}; for all other schemes the proofs are given here.

Referring to Table \ref{Table:constants} we have that the convergence rate $c(h)$ is proportional to $m/\gamma$ for small $h$, which is shown to match the convergence rate of the continuous dynamics for large $\gamma$ (see for example \cite{dalalyan2020sampling}). We have that the convergence rate is $hm/\gamma$ for all the schemes apart from BAOAB, OBABO and rOABAO, which have convergence rates which are faster than the continuous dynamics for large values of $\gamma$ and $h$ (as originally noted in \cite{leimkuhler2023contraction}). This is due to the fact that the $\mathcal{O}$ step is integrated exactly separately and one can take the high friction limit. Since the $\mathcal{O}$ step also leaves the measure invariant the bias in these types of schemes comes from the Hamiltonian integrator and hence retain high order perfect sampling bias \cite{leimkuhler2016computation}. However, these splitting schemes are only strong order $1$ and this can be seen particularly for large values of friction when the convergence rates are higher than for the continuous dynamics. (It fails to approximate the continuous dynamics, but it is accurate in the sampling context \cite{leimkuhler2016computation}).

\section{Proof of Theorem \ref{Theorem:Total_theorem}}\label{Sec:Proofs}
In the following contraction rate proofs we follow the structure of \cite{leimkuhler2023contraction} for the additional schemes that we are analysing.

\begin{proof}[Proof for rOABAO]
Using the fact that $\left(r\mathcal{OABAO}\right)^{n} = \mathcal{O}\left(r\mathcal{ABAO}\right)^{n-1}r\mathcal{ABAO}$, we can instead proof contraction of $r\mathcal{ABAO}$. This is done to simplify the problem to one gradient evaluation per step. Denoting two synchronous realisations (synchronous in the sense of $\left(u_{k}\right)_{k \in \mathbb{N}}$ and $\left(\xi_{k}\right)_{k \in \mathbb{N}}$) of rABAO as $\left(x_{j},v_{j}\right)$ and $\left(\Tilde{x}_{j}, \Tilde{v}_{j}\right)$ for $j \in \mathbb{N}$.
Further we denote $\overline{x} := \left(\Tilde{x}_{j} - x_{j}\right)$, $\overline{v} = \left(\Tilde{v}_{j} - v_{j}\right)$ and $\overline{z} = \left(\overline{x}, \overline{v}\right)$, where $\overline{z}_{j} = \left(\overline{x}_{j}, \overline{v}_{j}\right)$ for $j = k,k +1$ for $k \in \mathbb{N}$, where $\overline{z}_{k}$ are defined by the update rule
\begin{align*}
    \overline{x}_{k+1} &= \overline{x}_{k} + h \overline{v}_{k} - \frac{1}{2}h^{2}Q\left(\overline{x}_{k} + u_{k}\overline{v}_{k}\right),\quad \overline{v}_{k+1} = \eta\overline{v}_{k} - h\eta Q\left(\overline{x}_{k} + u_{k}\overline{v}_{k}\right).
\end{align*}
$Q$, by mean value theorem, is defined by $Q = \int^{1}_{t = 0}\nabla^{2}U(\Tilde{x}_{k} + u_{k}\Tilde{v}_{k} + t(x_{k} - \Tilde{x}_{k} + u_{k}(v_{k} - \Tilde{v}_{k})))dt$,
then $\nabla U\left(\Tilde{x}_{k} + u_{k}\Tilde{v}_{k}\right) - \nabla U\left(x_{k} + u_{k}v_{k}\right) = Q \left(\overline{x} + u_{k}\overline{v}\right)$. We use the notation of (\ref{eq:contraction_matrix}), where
\begin{align*}
    A &= -c(h) +Q \left(2 b \eta  h+ h^2\right) + Q^2 \left(-a \eta ^2 h^2- b \eta  h^3-\frac{1}{4} h^4\right),\\
    B &= b(1-\eta) - h - bc(h) +Q \left(a \eta ^2 h+\frac{3}{2} b \eta  h^2+b \eta  h u+\frac{1}{2}h^2 u+\frac{1}{2}h^3\right) \\
    &+  Q^2 \left(-a \eta ^2 h^2 u- b \eta  h^3 u- \frac{1}{4}h^4 u\right),\\
    C &= a(1-\eta ^2)-a c(h) -2 b \eta  h-h^2 +Q \left(2 a \eta ^2 h u+3 b \eta  h^2 u+ h^3 u\right)\\
    &+Q^2 \left(-a \eta ^2 h^2 u^2- b \eta  h^3 u^2-\frac{1}{4} h^4 u^2\right),
\end{align*}
for this scheme and $\eta = \exp{\{-\gamma h\}}$, further for the ease of notation we have used $u := u_{k}$. We have chosen $b$, such that $B$ simplifies to 
\begin{align*}
     B &= -bc(h) +  Q \left(a \eta ^2 h+\frac{3}{2} b \eta  h^2+b \eta  h u+\frac{1}{2}h^2 u+\frac{1}{2}h^3\right) +  Q^2 \left(-a \eta ^2 h^2 u- b \eta  h^3 u- \frac{1}{4}h^4 u\right).
\end{align*}
It is sufficient to prove that $A \succ 0$ and that $C - BA^{-1}B \succ 0$, noting that $A$, $B$ and $C$ all commute as they are all polynomials in $Q$. It is therefore sufficient to show that $A \succ 0$ and $AC - B^{2} \succ 0$ and $A$ is positive definite by the proof of \cite{leimkuhler2023contraction}[Theorem 6.1]. We use $P_{AC - B^{2}}\left(\lambda\right)$ to denote the eigenvalues of $AC-B^{2}$, where $\lambda$ are eigenvalues of $Q$, where we know $\lambda \in [m,M]$ due to the assumptions on $U$. $AC - B^{2} \succ 0$ is shown as follows
\begin{align*}
    &\frac{P_{AC - B^{2}}\left(\lambda\right)}{h\lambda} =  \frac{\left(\eta^2 -1\right) c\left(h\right)}{h M \lambda} + h \left( \frac{(1 + \eta)^{2}}{M} -\frac{\eta^{2}\lambda}{M^{2}} + \frac{c\left(h\right)^{2}}{h^{2} M \lambda} 
 - \frac{2c\left(h\right)\eta^{2}u}{hM}\right)\\
    &+ h^{2}\left(\frac{c\left(h\right)}{h} \left(-\frac{1 + 2\eta}{M} + \frac{1}{\lambda} + \frac{2\eta}{(1-\eta)\lambda} + \frac{\eta^{2}\lambda}{M^{2}}\right) + u\left(\frac{\eta^{2}\lambda}{M} + \frac{2\eta^{3}\lambda}{\left(1-\eta\right)M} + \frac{c\left(h\right)\eta^{2}\lambda u}{hM}\right)\right) \\
    &+ h^3 \left(-\frac{\eta^{2}}{\left(1-\eta\right)^{2}} - \frac{3\eta}{\left(1-\eta\right)^{2}} - \frac{1}{1-\eta} - \frac{c(h)^{2}}{h^{2}(1-\eta)^{2}\lambda} - \frac{\lambda (1-\eta^{2})}{4M} - \frac{\eta \lambda}{\left(1 - \eta\right)M}\right) + \\
    &h^{3}u\left(\frac{c(h)}{h}\left(-1 + \frac{2\eta}{\left(1 - \eta\right)^{2}} + \frac{1}{1-\eta} - \frac{3\eta}{1-\eta} - \frac{2\eta^{2}\lambda}{(1-\eta)M}\right) + u\left(-\frac{\lambda}{4} - \frac{\eta^{2}\lambda}{\left(1-\eta\right)^{2}} - \frac{\eta \lambda}{1-\eta}\right)\right)\\
 &+ h^{4}\left(\frac{c(h)}{h}\left(\frac{3\eta}{\left(1-\eta\right)^{2}} + \frac{1}{1-\eta} + \frac{\lambda}{4M} + \frac{\eta \lambda}{\left(1-\eta\right)M}\right)\right)\\
 &+ h^{4}\left(u\left(\frac{3\eta^{2}\lambda}{\left(1-\eta\right)^{2}} + \frac{2\eta\lambda}{1-\eta} + \frac{\lambda}{2\left(1-\eta \right)} + \frac{c\left(h\right)\lambda u}{4h} + \frac{c(h)\eta \lambda u}{h(1-\eta)}\right) \right)\\
 &+ h^{5} \left(-\frac{\eta^{2}\lambda}{4\left(1-\eta\right)^{2}} - \frac{2c(h)\eta\lambda u}{h\left(1 - \eta\right)^{2}} - \frac{c(h)\lambda u}{2h\left(1- \eta\right)} \right) \\
 & \geq -\frac{\left(1 +\eta\right)h}{4M} + h\left(\frac{1 + 3\eta/2}{M}\right) + h^{2}\left( 0 \right)\\
 &+ h^{3}\left( \left(-\frac{3\eta + 5/4}{\left(1 - \eta\right)^{2}} - \frac{1-\eta^{2}}{4}\right)+ u\frac{c(h)}{h}\left(-\frac{2\eta^{2}}{1-\eta}\right) + u^{2}M\left(-\frac{1}{4} - \frac{1}{\left(1-\eta\right)^{2}}  \right)\right)\\
 &+ h^{4}(0) + h^{5}\left(-\frac{\eta^{2}\lambda}{4(1-\eta)^{2}}\right)\\
 &\geq \frac{3h}{4M} + \frac{5\eta h}{4M} + \left(- \frac{3\eta h}{4M} - \frac{5h}{16M} - \frac{h}{16M} - \frac{h}{32M} - \frac{6h}{64M}\right) > 0, 
\end{align*}
where we have imposed the restriction $h < \frac{1- \eta}{\sqrt{4M}}$ and used $u <h$. Therefore contraction of rABAO holds and all computations can be checked using Mathematica. We now bound the remaining terms to achieve a contraction result for rOABAO. We bound $\mathcal{O}$ operator on $\|\cdot \|_{a,b}$ by
\begin{align*}
    &\|\Phi_{\rm{O}}\left(\Tilde{x},\Tilde{v}\right) - \Phi_{\rm{O}}\left(x,v\right)\|^{2}_{a,b}  \leq 3\|\left(\overline{x},\overline{v}\right)\|^{2}_{a,b},
\end{align*}
where we have used the norm equivalence in Section \ref{Sec:Quadratic_Norm}. We bound
\begin{align*}
    &\|\Phi_{\rm{rABAO}}\left(\Tilde{x},\Tilde{v}\right) - \Phi_{\rm{rABAO}}\left(x,v\right)\|^{2}_{a,b}  = \\
    &\|( \overline{x} + h \overline{v} - \frac{1}{2}h^{2}Q\left(\overline{x} + u\overline{v}\right),\eta^{1/2}\overline{v} - h\eta^{1/2} Q\left(\overline{x} + u\overline{v}\right))\|^{2}_{a,b},\\
    &\leq 3 \left(\|\overline{x} + h \overline{v}\|^{2} + \frac{1}{4}h^{4}M^{2}\|\overline{x} + u\overline{v}\|^{2} + a\eta\left(\|\overline{v}\|^{2} + h^{2}M^{2}\|\overline{x} + u\overline{v}\|^{2}\right)\right)\\
    &\leq 14\|(\overline{x},\overline{v})\|^{2}_{a,b},
\end{align*}
and we can combine these estimates to achieve the contraction result of rOABAO.
\end{proof}


\begin{proof}[Proof for BBK]

Using the relation
\[
\left(\Phi_{\mathcal{BBK}}\right)^{n} = \Phi_{B_{2}}\circ \Phi_{A}\circ\left(\Phi_{B_{1}} \circ \Phi_{B_{2}} \circ \Phi_{A}\right)^{n-1} \circ \Phi_{B_{1}},
\]
where
\begin{align*}
    \Phi_{A}(x,v) &= \left(x + hv,v\right),\\
    \Phi_{B_{1}}(x,v) &= \left(x,v + \frac{h}{2}\left(-\nabla U(x) - \gamma v + \frac{\sqrt{2\gamma}}{\sqrt{h}}\xi_{1}\right)\right),\\
    \Phi_{B_{2}}(x,v) &= \left(x,\left(1 + \frac{h}{2}\gamma\right)^{-1}\left(v - \frac{h}{2}\nabla U(x)+ \frac{\sqrt{2\gamma}}{\sqrt{h}}\xi_{2}\right)\right),
\end{align*}
and $\xi_{1},\xi_{2}\sim \mathcal{N}(0,1)^{n}$. We can instead prove contraction of $\Phi_{B_{1}} \circ \Phi_{B_{2}} \circ \Phi_{A}$, by doing this we only have to deal with a single evaluation of the Hessian at each step. We will denote two synchronous realisations of $\Phi_{B_{1}} \circ \Phi_{B_{2}} \circ \Phi_{A}$ as $\left(x_{j},v_{j}\right)$ and $\left(\Tilde{x}_{j}, \Tilde{v}_{j}\right)$ for $j \in \mathbb{N}$.
Further we denote $\overline{x} := \left(\Tilde{x}_{j} - x_{j}\right)$, $\overline{v} = \left(\Tilde{v}_{j} - v_{j}\right)$ and $\overline{z} = \left(\overline{x}, \overline{v}\right)$, where $\overline{z}_{j} = \left(\overline{x}_{j}, \overline{v}_{j}\right)$ for $j = k,k +1$ for $k \in \mathbb{N}$, where $\overline{z}_{k}$ are defined by the update rule 
\begin{align*}
    \overline{x}_{k+1} &= \overline{x}_{k} + h \overline{v}_{k}, \quad \overline{v}_{k+1} = \frac{1 - \frac{h}{2}\gamma}{1 + \frac{h}{2}\gamma}\left(\overline{v}_{k} - \frac{h}{2}Q(\overline{x}_{k} + h\overline{v}_{k})\right) - \frac{h}{2}Q(\overline{x}_{k} + h \overline{v}_{k}),
\end{align*}
where $Q$ is defined by mean value theorem, $Q = \int^{1}_{t = 0}\nabla^{2}U(\Tilde{x}_{k} + h\Tilde{v}_{k} + t(x_{k} - \Tilde{x}_{k} + h(v_{k} - \Tilde{v}_{k})))dt$,
then $\nabla U\left(\Tilde{x}_{k} + h\Tilde{v}_{k}\right) - \nabla U\left(x_{k} + hv_{k}\right) = Q \left(\overline{x} + h\overline{v}\right)$.
We use the notation of (\ref{eq:contraction_matrix}), where
\begin{align*}
    A &= -c(h)+\frac{2b h Q}{\frac{1}{2}\gamma h+1}-\frac{a h^2 Q^2}{(\frac{1}{2}\gamma h+1)^2},\\
    B &= b\frac{\gamma h}{\frac{1}{2} \gamma h+1}-h-bc(h)+ Q \left(\frac{ah\left(1-\frac{1}{2}\gamma h\right)}{(\frac{1}{2} \gamma h+1)^2}+\frac{2 b h^2}{\frac{1}{2} \gamma
   h+1}\right)-\frac{a h^3 Q^2}{(\frac{1}{2} \gamma h+1)^2},\\
    C &= a(1-c(h)) - h^{2} -\frac{a\left(1-
   \frac{1}{2}\gamma h\right)^{2}}{(\frac{1}{2}\gamma h+1)^2} -\frac{2bh(1 - \frac{1}{2} \gamma h)}{\frac{1}{2} \gamma
   h+1} \\
   &+ Q \left(\frac{2ah^2(1 - \frac{1}{2}\gamma h)}{(\frac{1}{2}\gamma h+1)^2}+\frac{2 b h^3}{\frac{1}{2} \gamma
   h+1}\right)-\frac{a h^4 Q^2}{(\frac{1}{2}\gamma h+1)^2}.
\end{align*}
This motivates the choice of $b = (\frac{\gamma h}{2} + 1)/\gamma$. We have chosen $b$ such that $B$ simplifies to 
\begin{align*}
     B &= -bc(h)+ Q \left(\frac{ah\left(1-\frac{1}{2}\gamma h\right)}{(\frac{1}{2} \gamma h+1)^2}+\frac{2 b h^2}{\frac{1}{2} \gamma
   h+1}\right)-\frac{a h^3 Q^2}{(\frac{1}{2} \gamma h+1)^2}.
\end{align*}
It is sufficient to prove that $A \succ 0$ and that $C - BA^{-1}B \succ 0$, noting that $A$, $B$ and $C$ commute as they are all polynomials in $Q$. It is therefore sufficient to prove that $A \succ 0$ and $AC - B^{2} \succ 0$. First considering $A$ with our choice of $c(h)$, $a$ and $b$, we have that
\begin{align*}
    P_{A}(\lambda) &= -c(h) + \frac{2h\lambda}{\gamma} - \frac{h^{2}\lambda^{2}}{M(\frac{1}{2}\gamma h +1)^{2}}\\
    &\geq h\lambda\left(\frac{7}{4\gamma}- h\right) > 0,
\end{align*}
where $h < 
\frac{7}{4\gamma}$. What remains is to show that $AC - B^{2} \succ 0$. This discretization is more complicated that the previous discretisations, we find that expanding $P_{AC-B^{2}}$ in terms of $a$ is convenient to show positive definiteness (as in \cite{leimkuhler2023contraction}). By using for example Mathematica one can check that 
$P_{AC-B^2}(\lambda) = (c_0 + c_1 a + c_2 a^2)/h\lambda$, where
\begin{align*}
    &(c_{1} + c_{2} a)/h\lambda = \frac{2}{\gamma} -\frac{2}{\gamma (\frac{1}{2}\gamma h+1)^2} + \frac{
   h}{(\frac{1}{2}\gamma h+1)^2} +\frac{h(1-\frac{1}{2}
   \gamma h)}{(\frac{1}{2}\gamma h+1)^2} -\frac{2 c(h)}{\gamma}\\
   &+\frac{2 b c(h)(1-\frac{1}{2}\gamma h)}{(\frac{1}{2}\gamma h+1)^2} -\frac{2 h c(h)(1- \frac{1}{2}\gamma h)}{(\frac{1}{2}\gamma h+1)^2}+\frac{2 h^2 \lambda}{\gamma (\frac{1}{2}\gamma h+1)^2} +\frac{c(h)}{h\lambda (\frac{1}{2}\gamma h+1)^2} -\frac{c(h)}{h\lambda}\\
   &-\frac{ \gamma c(h)}{\lambda (\frac{1}{2}\gamma h+1)^2} -\frac{2 b h^2 c(h) \lambda}{(\frac{1}{2}\gamma h+1)^2} +\frac{\frac{1}{4} \gamma^2 h c(h)}{\lambda (\frac{1}{2}\gamma h+1)^2} +\frac{h^3 c(h) \lambda}{(\frac{1}{2} \gamma
   h+1)^2} +\frac{c(h)^2}{h\lambda}\\
   &+ a \left(\frac{h c(h) \lambda}{(\frac{1}{2}\gamma h+1)^2}-\frac{h \lambda}{(\frac{1}{2}\gamma h+1)^2}\right),\\
   &\geq \frac{7
   h}{8(\frac{1}{2}\gamma h+1)^2} -\frac{2 c(h)}{\gamma} -\frac{ \gamma c(h)}{\lambda (\frac{1}{2}\gamma h+1)^2} -\frac{2 b h^2 c(h) \lambda}{(\frac{1}{2}\gamma h+1)^2}\\
   &\geq \frac{64}{81}h\left(\frac{7}{8} - \frac{1}{4} - \frac{h^{2}m\lambda(\frac{\gamma h}{2} + 1)}{2\gamma^{2}}  \right) - \frac{mh}{2\gamma^{2}} \geq \frac{64}{81}h\left(\frac{5}{8} - \frac{1}{64}  \right) - \frac{h}{8} \geq \frac{7h}{20},
\end{align*}
where we have relied on the fact that $\gamma^{2} \geq 4M \geq 4m$, and, for $h < \frac{1}{4\gamma}$, that $1 - \frac{1}{2}\gamma h > \frac{7}{8}$ and $1/(\frac{1}{2}\gamma h + 1) > \frac{8}{9}$.

Now considering the remaining terms we have that 
\begin{align*}
    c_{0} &= -\frac{b^2 c(h)^2}{h\lambda}+\frac{4 b h c(h)}{\gamma}-\frac{4 h}{\gamma^2}-\frac{2 h^2 c(h)}{\gamma}+\frac{2 c(h)}{\gamma \lambda} \geq -\frac{4h}{\gamma^{2}},
\end{align*}
where we can combine this with the previous estimate to obtain
\[
\frac{P_{AC - B^{2}}(\lambda)}{h\lambda} \geq h \left(\frac{7}{20M} - \frac{4}{\gamma^{2}} \right),
\]
which holds for $\gamma \geq \sqrt{12M}$. Hence $AC - B^{2} \succ 0$ and our contraction results hold.

Now we need to compute the prefactors.  First we consider
\begin{align*}
    &\|\Phi_{B_{2}} \circ \Phi_{A}(x,v) - \Phi_{B_{2}} \circ \Phi_{A}(\Tilde{x},\Tilde{v})\|^{2}_{a,b} \leq \left \|\left(\overline{x} + h \overline{v},\left(1 + \frac{h}{2}\gamma\right)^{-1}\left(\overline{v} - \frac{h}{2}Q\left(\overline{x} + h \overline{v}\right)\right)\right) \right \|^{2}_{a,b}\\
    & \hspace{0.5in}\leq 3\left( \|\overline{x}\|^{2}+ h^{2}\|\overline{v}\|^{2} + a\left(1 + \frac{h}{2}\gamma\right)^{-2}\left(\|\overline{v}\|^{2} + \frac{h^{2}}{2}M^{2}\left(\|\overline{x}\|^{2} + h^{2}\|\overline{v}\|^{2}\right)\right)\right)\\
    &\hspace{0.5in}\leq 7 \left \|(\overline{x},\overline{v})\right \|^{2}_{a,b},
\end{align*}
then observe
\begin{align*}
    &\|\Phi_{B_{1}}(x,v) - \Phi_{B_{1}}(\Tilde{x},\Tilde{v})\|^{2}_{a,b} \leq \|\left(\overline{x},\overline{v} - \frac{h}{2}Q\overline{x} -\gamma \frac{h}{2}\overline{v}\right)\|^{2}_{a,b}\\
    &\hspace{0.5in}\leq \frac{3}{2}\left( \|\overline{x}\|^{2} + 2a\left(1-\frac{\gamma h}{2}\right)^{2}\|\overline{v}\|^{2} + a\frac{h^{2}}{2}M^{2}\|\overline{x}\|^{2} \right)\\
    &\hspace{0.5in}\leq 6\|(\overline{x},\overline{v})\|^{2}_{a,b},
\end{align*}
and combine these estimates to achieve the contraction result for BBK.
\end{proof}

\begin{proof}[Proofs  for SPV and SVV]
 We now turn our attention to the stochastic position Verlet (SPV) method, where we use the fact that $(\mathcal{A}\mathcal{V}_{s}\mathcal{A})^{n} = \mathcal{A}\mathcal{V}_{s}(\mathcal{A}\mathcal{V}_{s})^{n-1}\mathcal{A}$. We can instead prove contraction of $\mathcal{A}\mathcal{V}_{s}$. This simplifies the estimation task, but will introduce a prefactor to take into account the estimation of the head and tail operators $\mathcal{A}\mathcal{V}_{s}$ and $\mathcal{A}$.  We denote two synchronous realizations of $\mathcal{A}\mathcal{V}_{s}$ as $\left(x_{j},v_{j}\right)$ and $\left(\Tilde{x}_{j}, \Tilde{v}_{j}\right)$ for $j \in \mathbb{N}$.
Next, write $\overline{x} := \left(\Tilde{x}_{j} - x_{j}\right)$, $\overline{v} = \left(\Tilde{v}_{j} - v_{j}\right)$ and $\overline{z} = \left(\overline{x}, \overline{v}\right)$, where $\overline{z}_{j} = \left(\overline{x}_{j}, \overline{v}_{j}\right)$ for $j = k,k +1$ for $k \in \mathbb{N}$, and $\overline{z}_{k}$ are defined by the update rule
\begin{align*}
    \overline{x}_{k+1} &= \overline{x}_{k} + h \overline{v}_{k}, \quad \overline{v}_{k+1} = \eta \overline{v}_{k} - \frac{1-\eta}{\gamma}Q(\overline{x}_{k} + h\overline{v}_{k}),
\end{align*}
where $\eta = \exp{(-\gamma h)}$ and we define $Q$ from the mean value theorem as $Q = \int^{1}_{t = 0}\nabla^{2}U(\Tilde{x}_{k} + h\Tilde{v}_{k} + t(x_{k} - \Tilde{x}_{k} + h(v_{k} - \Tilde{v}_{k})))dt$,
then $\nabla U\left(\Tilde{x}_{k} + h\Tilde{v}_{k}\right) - \nabla U\left(x_{k} + hv_{k}\right) = Q \left(\overline{x} + h\overline{v}\right)$.
We use the notation of (\ref{eq:contraction_matrix}), where
\begin{align*}
    A &= -c(h)+\frac{2b(1-\eta)}{\gamma}Q - \frac{a(1-\eta)^{2}}{\gamma^{2}}Q^{2},\\
    B &= b(1-\eta) - h - bc(h) + \left(\frac{a\eta(1-\eta)}{\gamma} + \frac{2bh(1-\eta)}{\gamma}\right)Q - \frac{ah(1-\eta)^{2}}{\gamma^{2}}Q^{2},\\
    C &= a(1-\eta^2)-a c(h)-2 b \eta  h-h^2 + \frac{2h(1-\eta)}{\gamma} \left(a \eta + b h\right)Q-\frac{a(1-\eta)^{2}h^{2}Q^2}{\gamma^{2}},
\end{align*}
which motivates the choice of $b = \frac{h}{1-\eta}$. We have chosen $b$, such that $B$ simplifies to 
\begin{align*}
     B &= -bc(h) + \left(\frac{a\eta(1-\eta)}{\gamma} + \frac{2bh(1-\eta)}{\gamma}\right)Q - \frac{ah(1-\eta)^{2}}{\gamma^{2}}Q^{2}.
\end{align*}
It is sufficient to prove that $A \succ 0$ and that $C - BA^{-1}B \succ 0$, noting that $A$, $B$ and $C$ commute as they are all polynomials in $Q$; it is sufficient to prove that $A \succ 0$ and $AC - B^{2} \succ 0$. We will use $P_{A}$ and $P_{AC-B^{2}}$ to denote the eigenvalues of the respective matrices, which will be polynomials in terms of the eigenvalues of $Q$, which we know belong in $[m,M]$. First considering $A$ with our choice of $c(h)$, $a$ and $b$ we have that 
\[
    P_{A}(\lambda) = -\frac{mh}{4\gamma}+\frac{2h}{\gamma}\lambda - \frac{(1-\eta)^{2}}{M\gamma^{2}}\lambda^{2}\geq \frac{h\lambda}{\gamma}\left(\frac{7}{4}- \frac{(1-\eta)^{2}}{h\gamma}\right) > 0,
\]
as $\frac{h\gamma}{2} \leq 1 - \eta \leq h\gamma$ for $h < \frac{1}{2\gamma}$.

Next, $AC - B^2 \succ 0$ is shown as follows
\begin{align*}
    &\frac{P_{AC - B^{2}}(\lambda)}{\lambda} = -\frac{(1-\eta)^{2}\lambda}{\gamma^{2}M^{2}}+h\left(\frac{2(1-\eta^{2})}{\gamma M} + \frac{c(h)}{h}\left(\frac{(1-\eta)^{2}\lambda}{\gamma^{2}M^{2}} -  \frac{1-\eta^{2}}{\lambda M}\right)\right)\\
    &+ h^{2}\left(-\frac{2c(h)}{h\gamma M}(1-\eta^{2}) + \frac{c(h)^{2}}{h^{2}M\lambda} + \frac{\lambda(1-\eta^{2})}{\gamma^{2}M}\right)\\
    &+ h^{3}\left(-\frac{2(1+\eta)}{(1-\eta)\gamma} + \frac{c(h)}{h}\left(\frac{(1 + \eta)}{(1-\eta)\lambda} - \frac{(1-\eta^{2})\lambda}{\gamma^{2}M}\right)\right) + h^{4}\left(\frac{2c(h)(1+\eta)}{\gamma h (1-\eta)} - \frac{c(h)^{2}}{h^{2}(1-\eta)^{2}\lambda}\right)\\
    &\geq -\frac{(1-\eta)h}{\gamma M} + h\left(\frac{2(1-\eta)(1+\eta)}{\gamma M}  - \frac{1-\eta^{2}}{4\gamma M}\right)+  h^{3}\left(-\frac{2(1+\eta)}{(1-\eta)\gamma}\right)\\
    &\geq \frac{3(1-\eta^{2})h}{4\gamma M} - \frac{8(1 - \eta^{2})h}{\gamma^{3}} > 0,
\end{align*}
where we have imposed the restriction $\gamma \geq \sqrt{11M}$. Hence $AC - B^{2} \succ 0$ and our contraction results hold. All computations can be checked using Mathematica.
Now we wish to explicitly compute estimates of the prefactors and the remaining terms, first we consider
\begin{align*}
    &\|\Phi_{\mathcal{V}_{s}} \circ \Phi_{A}(x,v) - \Phi_{\mathcal{V}_{s}} \circ \Phi_{A}(\Tilde{x},\Tilde{v})\|^{2}_{a,b} \leq \left \|(\overline{x} + \frac{h}{2} \overline{v},\eta \overline{v} - \frac{1-\eta}{\gamma}Q(\overline{x} + \frac{h}{2}\overline{v}))\right \|^{2}_{a,b}\\
    &\leq 3\left(\left(1 + 2\left(\frac{1-\eta}{\gamma}\right)^{2}M\right)\|\overline{x}\|^{2} + \left(a\eta^{2} + \frac{h^{2}}{4}\left(1 + 2\left( \frac{1-\eta}{\gamma}\right)^{2}M\right)\right)\|\overline{v}\|^{2}\right)\\
    &\leq 7\|(\overline{x},\overline{v})\|^{2}_{a,b}.
\end{align*}
and
\begin{align*}
    &\|\Phi_{A}(x,v) - \Phi_{A}(\Tilde{x},\Tilde{v})\|^{2}_{a,b} \leq \left \|(\overline{x} + \frac{h}{2} \overline{v},\overline{v})\right \|^{2}_{a,b}\\
    &\leq 7\|(\overline{x},\overline{v})\|^{2}_{a,b}.
\end{align*}
We can combine these estimates to achieve the contraction result for SPV.

Now for the stochastic velocity Verlet we have that  $(\mathcal{V}_{s}\mathcal{A}\mathcal{V}_{s})^{n} = \mathcal{V}_{s}(\mathcal{A}\mathcal{V}_{s}\mathcal{V}_{s})^{n-1}\mathcal{A}\mathcal{V}_{s}$. We will now focus our attention on proving contraction of $\mathcal{A}\mathcal{V}_{s}\mathcal{V}_{s}$. From the fact that $\mathcal{V}_{s}(h/2)\mathcal{V}_{s}(h/2) = \mathcal{V}_{s}(h)$,  contraction of $\mathcal{A}\mathcal{V}_{s}\mathcal{V}_{s}$ is shown in our argument for the stochastic position Verlet.

Now we wish to explicitly compute estimates of the prefactors and the remaining terms, first we consider
\[
    \|\Phi_{\mathcal{V}_{s}}(x,v) - \Phi_{\mathcal{V}_{s}}(\Tilde{x},\Tilde{v})\|^{2}_{a,b} \leq \left \|\left (\overline{x},\eta^{1/2} \overline{v} - \frac{1-\eta^{1/2}}{\gamma}Q\overline{x} \right )\right \|^{2}_{a,b} \leq 6\|(\overline{x},\overline{v})\|^{2}_{a,b}.
\]
and
\begin{align*}
    &\|\Phi_{\mathcal{V}_{s}} \circ \Phi_{A}(x,v) - \Phi_{\mathcal{V}_{s}} \circ \Phi_{A}(\Tilde{x},\Tilde{v})\|^{2}_{a,b} \leq \left \|\left (\overline{x} + h \overline{v},\eta^{1/2} \overline{v} - \frac{1-\eta^{1/2}}{\gamma}Q(\overline{x} + h\overline{v}) \right )\right \|^{2}_{a,b}\\
    &\leq 3\left(\left(1 + 2\left(\frac{1-\eta^{1/2}}{\gamma}\right)^{2}M\right)\|\overline{x}\|^{2} + \left(a\eta + h^{2}\left(1 + 2\left( \frac{1-\eta^{1/2}}{\gamma}\right)^{2}M\right)\right)\|\overline{v}\|^{2}\right)\\
    &\leq 7\|(\overline{x},\overline{v})\|^{2}_{a,b}.
\end{align*}
If we combine these estimates we have the required result.
\end{proof}

\section{Stochastic Gradients}

In many machine learning and statistics applications the cost of a gradient evaluation is high as it requires an evaluation of the entire data set. Instead stochastic gradients are used, where one takes a random sub-sample of the data-set to approximate the gradient with an unbiased estimate. An analysis of convergence rates of the discretizations with stochastic gradients is performed in \cite{monmarche2022hmc}.
\begin{definition} \label{def:stochastic_gradient}
A \textit{stochastic gradient approximation} of a potential $U$ is defined by a function $\mathcal{G}:\mathbb{R}^n \times \Omega \to \mathbb{R}^{n}$ and a probability distribution $\rho$ on a Polish space $\Omega$, satisfying that $\mathcal{G}$ is measurable on $(\Omega,\mathcal{F})$, and that for every $x \in \mathbb{R}^{n}$, for $W\sim \rho$,
\[\mathbb{E}(\mathcal{G}(x,W)) = \nabla U(x).\]
The function $\mathcal{G}$ and the distribution $\rho$  together define the stochastic gradient, which we denote as  $(\mathcal{G}, \rho)$.
\end{definition}

In many applications this can dramatically reduce the computational cost as the approximation will usually come at a a fraction of the workload. The numerical schemes considered in \cite{leimkuhler2023contraction} and in this article are roughly one gradient evaluation per sample, roughly meaning when negating the extra gradient evaluations at the head and tail of the simulation of the algorithm (for the first and last sample). This is done by using the same gradient evaluation in consecutive velocity updates when the position has not been updated, for an increase in computational efficiency. We treat this case when it also comes to stochastic gradients to improve computational efficiency, for example in the BAOAB scheme the last B and first B of each iteration will share an estimate of the force (using the same stochastic gradient evaluation). For clarity a stochastic gradient version of each algorithm is provided in Appendix \ref{Appendix:SG-Algorithms}.

In our convergence rate estimates we impose the assumption that the variance of the Jacobian of the stochastic gradient is bounded.
\begin{assumption}\label{Assumption:Bounded_Variance}
    We assume that the Jacobian of the stochastic gradient $\mathcal{G}$, $D_x\mathcal{G}(x,W)$ exists and it is measurable on $(\Omega,\mathcal{F})$.
    We also assume there exists $C_{G}>0$ such that for $W\sim \rho$,
    \begin{equation*}
        \sup_{x\in \mathbb{R}^{n}}\mathbb{E}\|D_{x}\mathcal{G}(x,W) - \nabla^{2}U(x)\|^{2} \leq C_{G}.
    \end{equation*}
\end{assumption}
Our results extend to the stochastic gradient setting by including a coupling in the mini-batches or the stochastic gradients in the same was as OABAO in \cite{monmarche2022hmc}. Further the results for the other schemes in this article and \cite{leimkuhler2023contraction} are generalized to the case of stochastic gradients when the same stochastic gradient is chosen as in Appendix \ref{Appendix:SG-Algorithms} for each algorithm.  In this way there is still one gradient evaluation per step.

We remark that these assumptions hold when $\mathcal{G}$ is of the form $\mathcal{G}(x,W) = \sum_{i \in W}\nabla U_{i}(x)$, where $W \in \Omega \subset \llbracket 1,N \rrbracket^{n}$ and $(U_{i})_{i \in \llbracket 1,N \rrbracket}$ are strongly convex and gradient-Lipschitz, this is the setting of minibatching in many Bayesian learning problems. The contraction results of Theorem \ref{Theorem:Total_theorem} are extended to the stochastic gradient setting in Theorem \ref{Theorem:Stochastic_Gradients}. We remark that our assumptions are more flexible than the assumptions imposed in \cite{monmarche2022hmc}, where they assume that the stochastic gradient is universally gradient Lipschitz and strongly convex over the entire state space $\Omega$. 

\begin{theorem}\label{Theorem:Stochastic_Gradients}
Consider the numerical schemes for stochastic gradient kinetic Langevin dynamics given in Appendix \ref{Appendix:SG-Algorithms} and Table \ref{Table:stoc_constants}, where the potential $U$ is $m$-strongly convex and $M$-$\nabla$Lipschitz. Assume a stochastic gradient approximation defined by $(\mathcal{G},\rho)$ (see Definition \ref{def:stochastic_gradient}) satisfying  Assumption \ref{Assumption:Bounded_Variance} with constant $C_{G}$. We consider any sequence of synchronously coupled random variables (in Brownian increment and stochastic gradient) with initial conditions $(x_{0},v_{0}) \in \mathbb{R}^{2n}$ and $(\Tilde{x}_{0},\Tilde{v}_{0}) \in \mathbb{R}^{2n}$.

\begin{table}[h]

\begin{center}
\begin{tabular}{ |c|c|c| } 
\hline
{\rm Algorithm} &  c(h) & C(h) \\
\hline
{\rm EM}  &  $mh/2\gamma - 2h^{2}C_{G}/M$ & $1$ \\
{\rm BBK} &  $mh/4\gamma - 4h^{2}C_{G}/M$ & $7 + 3h^{2}C_{G}/M$ \\
{\rm SPV} & $mh/4\gamma - 4h^{2}C_{G}/M$ & $7 + 12h^{2}C_{G}/M$\\
{\rm SVV} & $mh/4\gamma - 4h^{2}C_{G}/M$ & $7 + 6h^{2}C_{G}/M$ \\
{\rm BAOAB} & $h^{2}m/4(1-\eta) - 5h^{2}C_{G}\left(\eta/M + \frac{1}{4}h^{2}\right)$& $7 + 3h^{2}C_{G}/M$ \\
{\rm OBABO} & $h^{2}m/4(1-\eta) - 4h^{2}C_{G}/M$ & $8 + 3h^{2}C_{G}/M$ \\
{\rm rOABAO} & $h^{2}m/4(1-\eta) - 5h^{2}C_{G}\left(\eta/M + \frac{1}{4}h^{2}\right)$ & $8 + 8h^{2}C_{G}/M$ \\ 
{\rm SES/EB} & $mh/4\gamma - 4h^{2}C_{G}/M$ & $1$\\
\hline
\end{tabular}
\end{center}

\caption{Contraction rates $c(h)$ and preconstants $C(h)$ in \eqref{eq:Expected_Contraction}.}
\label{Table:stoc_constants}
\end{table}
Under stepsize restrictions $h < h_{0}$ and $\gamma \geq \gamma_{0}$, where $h_{0}$ and $\gamma_{0}$ are given in Table \ref{Table:constants}, and given initial conditions $(x_{0},v_{0}) \in \mathbb{R}^{2n}$ and $(\Tilde{x}_{0},\Tilde{v}_{0}) \in \mathbb{R}^{2n}$ we have the expected contraction 
\begin{equation}\label{eq:Expected_Contraction}
 \left(\mathbb{E}\|(x_{k} - \Tilde{x}_{k},v_{k} - \Tilde{v}_{k})\|^{2}_{a,b}\right)^{1/2} \leq C(h)(1-c(h))^{s/2}\|(x_{0} - \Tilde{x}_{0},v_{0} - \Tilde{v}_{0})\|_{a,b},   
\end{equation}
with norm given with constants $a = 1/M$ and $b$, where the contraction rate $c(h)$ and the preconstant $C(h)$ are given in Table \ref{Table:stoc_constants} and $b$ and the number of steps $s$ are given in Table \ref{Table:constants} with all parameters specific to each scheme.

\end{theorem}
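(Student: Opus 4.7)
The plan is to lift the deterministic contraction of Theorem \ref{Theorem:Total_theorem} to the stochastic gradient setting by using a single synchronous coupling in which both chains share the same Brownian increments $(\xi_k)_{k\in\mathbb{N}}$, the same midpoint variables $(u_k)$ where relevant, and crucially the same stochastic gradient draws $(W_k)$. For each of the schemes in Appendix \ref{Appendix:SG-Algorithms}, this synchronous coupling yields a one-step difference update of the same algebraic form as in the proof of Theorem \ref{Theorem:Total_theorem}, except that the Hessian average
\[
Q = \int_0^1 \nabla^2 U\bigl(\tilde x_k + t(x_k-\tilde x_k)+\ldots\bigr)\,\mathrm dt
\]
is replaced by its stochastic analogue
\[
Q_{W_k} = \int_0^1 D_x\mathcal{G}\bigl(\tilde x_k + t(x_k-\tilde x_k)+\ldots,\,W_k\bigr)\,\mathrm dt,
\]
so that $\mathcal{G}(\tilde x_k+\cdot,W_k)-\mathcal{G}(x_k+\cdot,W_k) = Q_{W_k}(\overline x+\cdot)$. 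By Definition \ref{def:stochastic_gradient} and Assumption \ref{Assumption:Bounded_Variance} the remainder $E_k := Q_{W_k}-Q$ satisfies $\mathbb{E}[E_k\mid \mathcal{F}_k] = 0$ and $\mathbb{E}\|E_k\|^2 \leq C_G$, conditionally on the filtration $\mathcal{F}_k$ generated by the history up to step $k$.

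The main step is then to decompose the update as $\overline z_{k+1} = P_0\overline z_k + P_E\overline z_k$, where $P_0$ is the deterministic-gradient transfer matrix (depending on $Q$) treated in Theorem \ref{Theorem:Total_theorem} and $P_E$ is the correction linear in $E_k$. Taking squared $\|\cdot\|_{a,b}$-norm and conditioning on $\mathcal{F}_k$, the bilinear cross term
\[
2\,\bigl\langle P_0\overline z_k,\,\mathbb{E}[P_E\mid\mathcal{F}_k]\,\overline z_k\bigr\rangle_{a,b}
\]
vanishes because $P_0$ and $\overline z_k$ are $\mathcal{F}_k$-measurable and $\mathbb{E}[E_k\mid\mathcal{F}_k]=0$. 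Thus
\[
\mathbb{E}\bigl[\|\overline z_{k+1}\|^2_{a,b}\,\bigm|\,\mathcal{F}_k\bigr] = \|P_0\overline z_k\|^2_{a,b} + \mathbb{E}\bigl[\|P_E\overline z_k\|^2_{a,b}\,\bigm|\,\mathcal{F}_k\bigr].
\]
The first term is bounded by $(1-c_0(h))\|\overline z_k\|^2_{a,b}$ using Theorem \ref{Theorem:Total_theorem} with its scheme-specific $c_0(h)$. For the second term, I would read off $P_E$ from each scheme (e.g.\ for EM, $P_E\overline z_k = (0,-hE_k\overline x_k)$, for BBK and the Verlet variants $P_E$ picks up an extra factor of $h$ and an $a=1/M$ from the velocity component, and for rOABAO and BAOAB the midpoint and the reused gradient contribute terms controlled by $h^2 C_G\bigl(\eta/M+\tfrac14 h^2\bigr)$), apply Assumption \ref{Assumption:Bounded_Variance} entrywise to obtain a bound of the form $\alpha_{\mathrm{alg}}\,h^2 C_G M^{-1}\|\overline z_k\|^2_{a,b}$ with an explicit scheme-dependent constant $\alpha_{\mathrm{alg}}$, and use the norm equivalence $\|z\|^2_{a,0}\leq 2\|z\|^2_{a,b}$ from Section \ref{Sec:Quadratic_Norm} when the correction only touches one coordinate.

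Combining these two estimates yields
\[
\mathbb{E}\bigl[\|\overline z_{k+1}\|^2_{a,b}\,\bigm|\,\mathcal{F}_k\bigr] \leq \bigl(1-c(h)\bigr)\|\overline z_k\|^2_{a,b},\qquad c(h) = c_0(h)-\alpha_{\mathrm{alg}}\,h^2 C_G M^{-1},
\]
which matches the entries of Table \ref{Table:stoc_constants}. Iterating and taking total expectation proves \eqref{eq:Expected_Contraction} for the reduced step $s$, and the head/tail operators are handled as in the proofs of Theorem \ref{Theorem:Total_theorem}: the same operator bounds go through but acquire an extra $O(h^2 C_G/M)$ summand because each remaining gradient evaluation now injects an $E_k$-term whose squared norm is absorbed into $C(h)$ via Assumption \ref{Assumption:Bounded_Variance}. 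This accounts for the corrections in the right column of Table \ref{Table:stoc_constants}.

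The main obstacle I anticipate is the careful bookkeeping of how many times the stochastic gradient is reused inside a single step of the multi-stage schemes (BAOAB, OBABO, rOABAO, BBK) and whether the correction $E_k$ there is cleanly isolated as an additive perturbation to $P_0$ rather than creating higher-order coupling terms. Because we consistently reuse the same $W_k$ for gradient evaluations that share a position (as specified in Appendix \ref{Appendix:SG-Algorithms}), the corrections on each stage factor through a single $E_k$, which is what makes the decomposition $P_0+P_E$ and the vanishing of the cross term legitimate; verifying this one scheme at a time, and matching the coefficient $\alpha_{\mathrm{alg}}$ against Table \ref{Table:stoc_constants}, is the core technical work.
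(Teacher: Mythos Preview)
Your proposal is correct and follows essentially the same strategy as the paper: synchronously couple in both the Brownian increments and the stochastic-gradient draw, replace the Hessian average $Q$ by its stochastic analogue $\tilde Q=Q_{W_k}$, and show that the expected squared $\|\cdot\|_{a,b}$-norm of the one-step difference splits into the deterministic contraction term plus a zero-mean-free remainder controlled by $C_G$. Your additive decomposition $P=P_0+P_E$ with $P_E$ linear in $E_k=\tilde Q-Q$, together with the vanishing of the cross term, is exactly equivalent to what the paper does; the only packaging difference is that the paper expands the quadratic form $\overline z_k^{\,T}P^T M P\,\overline z_k$ directly as a quadratic polynomial in $\tilde Q$ and observes that, since $\mathbb{E}[\tilde Q]=Q$ and $\mathbb{E}[\tilde Q^2]=Q^2+\mathrm{Var}(\tilde Q)$, only the $\tilde Q^2$-coefficients $a_2,b_2,c_2$ already computed in the entries $A,B,C$ of Section~\ref{Sec:Proofs} contribute to the remainder $\overline z_k^{\,T}\mathcal{R}(\tilde Q)\,\overline z_k$. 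That organization lets them read off the scheme-specific penalty directly from the $Q^2$-terms in the existing $A,B,C$ formulas rather than recomputing $P_E$ from scratch, which is a minor bookkeeping convenience but not a different idea. Your handling of the reordering (proving contraction of the one-gradient-per-step variant, e.g.\ $\mathcal{ABAO}$ for BAOAB) and of the head/tail preconstants with the extra $O(h^2 C_G/M)$ correction is also what the paper does.
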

\begin{remark}
    Compared to Theorem \ref{Theorem:Total_theorem} with deterministic gradients, Theorem \ref{Theorem:Stochastic_Gradients} demonstrates expected contraction, because the randomness from the stochastic gradients can be integrated out. This allows us to make Assumption \ref{Assumption:Bounded_Variance} less restrictive than it would need to be otherwise. Rather than deterministic contraction we have contraction in expectation.
\end{remark}
\begin{remark}
Our analysis suggests a reduction in the convergence rate for large gradient noises, which we have observed in numerical experiments when using sub-sampling and very small batches. For large gradient noise $C_{G}$ and stepsize $h$ it is possible that these bounds become vacuous and the loss of convergence was also confirmed in our experiments.
\end{remark}

\begin{remark}
    The implementation of the BAOAB algorithm and other algorithms considered in Section \ref{Appendix:SG-Algorithms} is non-Markovian, because the last $B$ step of each iteration and the first $B$ step of the next iteration share the same stochastic gradient sample. This is not an issue in our convergence rate framework as we consider convergence of a different operator, which is Markovian, for example $\mathcal{ABAO}$ for BAOAB, which does not share stochastic gradients with consecutive iterations. We simplify the problem into proving convergence of an operator which only has a single gradient evaluation and hence is Markovian in the stochastic gradient setting.
\end{remark}

\begin{proof}[Proof of Theorem \ref{Theorem:Stochastic_Gradients}]
For stochastic gradients we synchronously couple Brownian increments as well as the stochastic gradients. We wish to instead consider expected contraction of the update rule we used to prove contraction in the full gradient setting, i.e. for synchronously coupled (in stochastic gradient and Brownian increment) iterates $(x_{l},v_{l}),(\Tilde{x}_{l},\Tilde{v}_{l}) \in \mathbb{R}^{2n}$ for $l \in \mathbb{N}$ and $(\overline{x}_{l},\overline{v}_{l}) = (\Tilde{x}_{l},\Tilde{v}_{l}) - (x_{l},v_{l})$ and for $k \in \mathbb{N}$
\[
\mathbb{E}\|(\overline{x}_{k+1},\overline{v}_{k+1})\|^{2}_{a,b} \leq \left(1-c(h)\right)\|(\overline{x}_{k},\overline{v}_{k})\|^{2}_{a,b}
\]
then we have
\[
\mathbb{E}\left(\overline{z}^{T}_{k}P^{T}MP\overline{z}_{k}\right) \leq \left(1-c(h)\right)\overline{z}^{T}_{k}M\overline{z}_{k}.
\]
Now if $\Tilde{Q}$ is defined through the mean value theorem of $D_{x}\mathcal{G}$ (the Jacobian of $\mathcal{G}$) and is a random variable in $W$, such that $\mathbb{E}(\Tilde{Q}) = Q$. Then $P^{T}MP$ is of the form
\begin{equation*}
\mathcal{P}(\Tilde{Q}) = \begin{pmatrix}
    P_{1}(\Tilde{Q}) & P_{2}(\Tilde{Q})  \\
    P_{2}(\Tilde{Q})  & P_{3}(\Tilde{Q}) 
\end{pmatrix},    
\end{equation*}
where $P_{1}, P_{2}$ and $P_{3}$ are quadratics in $\Tilde{Q}$ of the form
\begin{align*}
    P_{1}(\Tilde{Q}) &= a_{0} + a_{1}\Tilde{Q} + a_{2}\Tilde{Q}^{2},\\
    P_{2}(\Tilde{Q}) &= b_{0} + b_{1}\Tilde{Q} + b_{2}\Tilde{Q}^{2},\\
    P_{3}(\Tilde{Q}) &= c_{0} + c_{1}\Tilde{Q} + c_{2}\Tilde{Q}^{2}.
\end{align*} 
Then we have 
\begin{equation*}
\mathbb{E}(P^{T}MP) =  \mathcal{P}(Q) + \begin{pmatrix}
    a_{2}\mathbb{E}(\Tilde{Q} - Q)^{2} & b_{2}\mathbb{E}(\Tilde{Q} - Q)^{2}  \\
    b_{2}\mathbb{E}(\Tilde{Q} - Q)^{2}  & c_{2}\mathbb{E}(\Tilde{Q} - Q)^{2} 
\end{pmatrix},    
\end{equation*}
 in combination with the Theorem \ref{Theorem:Total_theorem} result we have that 
\begin{align*}
  \mathbb{E}\|(x_{k+1},v_{k+1})\|^{2}_{a,b} &\leq \left(1-c(h)\right)\|(x_{k},v_{k})\|^{2}_{a,b} + z_{k}^{T}\begin{pmatrix}
    a_{2}\mathbb{E}(\Tilde{Q} - Q)^{2} & b_{2}\mathbb{E}(\Tilde{Q} - Q)^{2}  \\
    b_{2}\mathbb{E}(\Tilde{Q} - Q)^{2}  & c_{2}\mathbb{E}(\Tilde{Q} - Q)^{2} 
\end{pmatrix}z_{k}\\
&= \left(1-c(h)\right)\|(x_{k},v_{k})\|^{2}_{a,b} + z_{k}^{T}\mathcal{R}(\Tilde{Q})z_{k},
\end{align*}
where we use the notation
\[
\mathcal{R}(\Tilde{Q}) := \begin{pmatrix}
    a_{2}\mathbb{E}(\Tilde{Q} - Q)^{2} & b_{2}\mathbb{E}(\Tilde{Q} - Q)^{2}  \\
    b_{2}\mathbb{E}(\Tilde{Q} - Q)^{2}  & c_{2}\mathbb{E}(\Tilde{Q} - Q)^{2} 
\end{pmatrix}.
\]
Then we will bound the remainder term $z^{T}\mathcal{R}(\Tilde{Q})z$ separately for each scheme and we refer the reader to the contraction estimate proofs in \cite{leimkuhler2023contraction} for the coefficients $a_{2}, b_{2}$ and $c_{2}$ for Euler-Maruyama, BAOAB, OBABO and SES and to Section \ref{Sec:Proofs} for the schemes analyzed in this article. We remark that we analyze the update rules for which we proved contraction for all the schemes, which aren't necessarily the same as the scheme for example we analyze $\mathcal{ABAO}$ for BAOAB. Throughout these estimates we use the equivalence of norms in Section \ref{Sec:Quadratic_Norm} and the stepsize and parameter restrictions imposed in the contraction estimates of the respective schemes. We define $\textnormal{Var}(\Tilde{Q}) := \mathbb{E}(\Tilde{Q} - Q)^{2}$, to be the variance of $\Tilde{Q}$.
\begin{enumerate}
    \item For the Euler-Maruyama $a_{2} > 0$ and $b_{2} = c_{2} = 0$, therefore we have for $z = (x,v) \in \mathbb{R}^{2n}$
\begin{align*}
  z^{T}\mathcal{R}(\Tilde{Q})z &\leq h^{2}aC_{G}\|x\|^{2}\\
  &\leq  2h^{2}aC_{G}\|z\|^{2}_{a,b}.
\end{align*}
    \item For BAOAB we have for $\mathcal{ABAO}$
    \begin{align*}
      z^{T}\mathcal{R}(\Tilde{Q})z &= ah^{2}\left(\eta^{2} + b\eta h M + \frac{h^{2}}{4}M\right)\left( x + \frac{h}{2}v\right)^{T}\textnormal{Var}(\Tilde{Q})\left(x + \frac{h}{2}v\right)\\
        &\leq 4ah^{2}C_{G}\left(\eta^{2} + b\eta h M + \frac{h^{2}}{4}M\right)\|(x,v)\|^{2}_{a,b}\\
        &\leq 5ah^{2}C_{G}\left(\eta + \frac{h^{2}}{4}M\right)\|(x,v)\|^{2}_{a,b}.
\end{align*}

\item For OBABO we have for $\mathcal{ABOB}$
    \begin{align*}
      z^{T}\mathcal{R}(\Tilde{Q})z &= a\left(\eta + 1 \right)^{2}\frac{h^{2}}{4}\left( \left( x + hv\right)^{T}\textnormal{Var}(\Tilde{Q})\left(x + hv\right)\right)\\
      &\leq a\left(\eta + 1\right)^{2}h^{2}C_{G}\|(x,v)\|^{2}_{a,b} \leq 4ah^{2}C_{G}\|(x,v)\|^{2}_{a,b}.
\end{align*}

\item For SES we have 
    \begin{align*}
      z^{T}\mathcal{R}(\Tilde{Q})z &= \left( \frac{a\left(1-\eta\right)^{2}}{\gamma^{2}} + \frac{2b\left(1-\eta\right)\left(-1 + \eta + \gamma h\right)}{\gamma^{3}} + \frac{\left(-1 + \eta +\gamma h \right)^{2}}{\gamma^{4}}\right)x^{T}\textnormal{Var}(\Tilde{Q})x\\
      &\leq 4ah^{2}C_{G}\|(x,v)\|^{2}_{a,b}.
\end{align*}

\item For rOABAO we have for $r\mathcal{ABAO}$
    \begin{align*}
      z^{T}\mathcal{R}(\Tilde{Q})z &= \left(a\eta^{2}h^{2}+ b\eta h^{3} + \frac{1}{4}h^{4}\right)\left(\left(x + uv\right)^{T}\textnormal{Var}(\Tilde{Q})\left(x + uv\right)\right)\\
      &\leq 4\left(a\eta^{2}h^{2}+ b\eta h^{3} + \frac{1}{4}h^{4}\right)C_{G}\|(x,v)\|^{2}_{a,b}\\
      &\leq 5h^{2}\left(a\eta + \frac{1}{4}h^{2}\right)C_{G}\|(x,v)\|^{2}_{a,b}.
\end{align*}

\item For BBK we have for $\mathcal{A}\mathcal{B}_{2}\mathcal{B}_{1}$
    \begin{align*}
      z^{T}\mathcal{R}(\Tilde{Q})z &= \frac{ah^{2}}{\left(\frac{1}{2}\gamma h + 1\right)^{2}}\left(x + hv\right)^{T}\textnormal{Var}(\Tilde{Q})\left(x + hv\right)\\
      &\leq 4ah^{2}C_{G}\|(x,v)\|^{2}_{a,b}.
\end{align*}

\item For SPV and SVV we have for $\mathcal{AB}_{s}$
    \begin{align*}
      z^{T}\mathcal{R}(\Tilde{Q})z &= \frac{a(1-\eta)^{2}}{\gamma^{2}}\left(x + hv\right)^{T}\textnormal{Var}(\Tilde{Q})\left(x + hv\right)\\
      &\leq 4ah^{2}C_{G}\|(x,v)\|^{2}_{a,b}.
    \end{align*}

\end{enumerate}

Due to the stochastic gradient we have to recompute the preconstants for BAOAB, OBABO, rOABAO, BBK, SPV and SVV. We will use the fact that
\begin{align*}
    \mathbb{E}\|\Tilde{Q}x\|^{2} &= \mathbb{E}\|(\Tilde{Q} - Q)x\|^{2} + \|Qx\|^{2}\\
    &\leq \left(C_{G} + M^{2}\right)\|x\|^{2}
\end{align*}
to compute the preconstants, we will now perform the calculation for all necessary schemes, we refer you to \cite{leimkuhler2023contraction} and Section \ref{Sec:Proofs} for the operators at the head and tail of the contraction estimates that we will need to bound. 

\begin{enumerate}
    \item For BAOAB in \cite{leimkuhler2023contraction}, contraction of $\mathcal{ABAO}$ was proven and hence we need to bound $\mathcal{BAO}$ and $\mathcal{AB}$. For $k \in \mathbb{N}$ and iteration $(x_{k},v_{k}),(\Tilde{x}_{k},\Tilde{v}_{k}) \in \mathbb{R}^{2n}$ we first estimate
\begin{align*}
    &\mathbb{E}\|\Phi_{\mathcal{AB}}\left(\Tilde{x}_{k},\Tilde{v}_{k}\right)-\Phi_{\mathcal{AB}}\left(x_{k},v_{k}\right)\|^{2}_{a,b}\\
    &\leq 3\left(\left(1 + \frac{ah^{2}}{2}\left(C_{G} + M^{2}\right)\right)\|\overline{x}_{k}\|^{2} + \left(a + \frac{h^{2}}{4} + \frac{ah^{4}}{8}\left(M^{2} + C_{G}\right)\right)\|\overline{v}_{k}\|^{2}\right)\\
    &\leq 7\|\left(\overline{x}_{k},\overline{v}_{k}\right)\|^{2}_{a,b} + 3\left(\frac{1}{2}ah^{2}C_{G}\|\overline{x}_{k}\|^{2} + a\frac{h^{4}}{8}C_{G}\|\overline{v}_{k}\|^{2}\right)\\
    &\leq \left(7 + 3ah^{2}C_{G}\right)\|(\overline{x}_{k},\overline{v}_{k})\|^{2}_{a,b},
\end{align*}
and where we have defined $\overline{x}_{k} = \Tilde{x}_{k} - x_{k}$ and $\overline{v}_{k} = \Tilde{v}_{k} - v_{k}$. We then estimate
\begin{align*}
    &\mathbb{E}\|\Phi_{\mathcal{BAO}}\left(\Tilde{x}_{k},\Tilde{v}_{k}\right)-\Phi_{\mathcal{BAO}}\left(x_{k},v_{k}\right)\|^{2}_{a,b}\\
    &\leq 3\left(\left(1 +\frac{h^{2}}{4}\left(a\eta^{2} + \frac{h^{2}}{2}\right)\left(C_{G} + M^{2}\right)\right)\|\overline{x}_{k}\|^{2} + \left(a\eta^{2} + \frac{h^{2}}{2}\right)\|\overline{v}_{k}\|^{2}\right)\\
    &\leq \left(7 + \frac{h^{2}}{2}C_{G}\left(a\eta^{2} + \frac{h^{2}}
{2}\right)\right)\|(\overline{x}_{k},\overline{v}_{k})\|^{2}_{a,b}.
\end{align*}
Now combining these constants we have the relevant preconstant for the constraction estimate for BAOAB.
\item Similarly we have the following bounds to compute preconstants for OBABO:
\begin{align*}
    &\mathbb{E}\|\Phi_{\mathcal{ABO}}\left(\Tilde{x}_{k},\Tilde{v}_{k}\right)-\Phi_{\mathcal{ABO}}\left(x_{k},v_{k}\right)\|^{2}_{a,b}\\
    &\leq 3\left(\left(1 + \frac{ah^{2}}{2}\left(C_{G} + M^{2}\right)\right)\|\overline{x}_{k}\|^{2}+ \left(a + h^{2} + \frac{ah^{4}}{2}\left(C_{G} + M^{2}\right)\right)\|\overline{v}_{k}\|^{2}\right)\\
    &\leq \left(8 + 3ah^{2}C_{G}\right)\|(\overline{x}_{k},\overline{v}_{k})\|^{2}_{a,b},
\end{align*}
and then we estimate
\begin{align*}
    &\mathbb{E}\|\Phi_{\mathcal{OB}}\left(\Tilde{x}_{k},\Tilde{v}_{k}\right)-\Phi_{\mathcal{OB}}\left(x_{k},v_{k}\right)\|^{2}_{a,b}\\
    &\leq 3\left(\left(\frac{1}{2} + \frac{ah^{2}}{4}\left(C_{G} + M^{2}\right)\right)\|\overline{x}_{k}\|^{2} + a\|\overline{v}_{k}\|^{2}\right)\\
    &\leq \left(6 + 2ah^{2}C_{G}\right)\|(\overline{x}_{k},\overline{v}_{k})\|^{2}_{a,b}.
\end{align*}
Now combining these constants we have the relevant preconstant for the contraction estimate for OBABO.
    \item Now considering rOABAO the preconstant bound for $\mathcal{O}$ is unaffected so we just need to estimate the operator $r\mathcal{ABAO}$. We have
\begin{align*}
    &\mathbb{E}\|\Phi_{r\mathcal{ABAO}}\left(\Tilde{x}_{k},\Tilde{v}_{k}\right)-\Phi_{r\mathcal{ABAO}}\left(x_{k},v_{k}\right)\|^{2}_{a,b} \leq \left(14 + 14ah^{2}C_{G}\right)\|(\overline{x}_{k},\overline{v}_{k})\|^{2}_{a,b},
\end{align*}
and we can combine this wiht the bound for $\mathcal{O}$ given in Section \ref{Sec:Proofs}.
    \item For BBK we have the estimates
\begin{align*}
    &\mathbb{E}\|\Phi_{B_{2}}\circ \Phi_{A}\left(\Tilde{x}_{k},\Tilde{v}_{k}\right)-\Phi_{B_{2}}\circ \Phi_{A}\left(x_{k},v_{k}\right)\|^{2}_{a,b} \leq \left(7 + 3ah^{2}C_{G}\right)\|(\overline{x}_{k},\overline{v}_{k})\|^{2}_{a,b},
\end{align*}
and
\begin{align*}
    &\mathbb{E}\|\Phi_{B_{1}}\left(\Tilde{x}_{k},\Tilde{v}_{k}\right)-\Phi_{B_{1}}\left(x_{k},v_{k}\right)\|^{2}_{a,b} \leq \left(6 + 2ah^{2}C_{G}\right)\|(\overline{x}_{k},\overline{v}_{k})\|^{2}_{a,b},
\end{align*}
which we can combine to get the desired preconstant. 
    \item For SPV we estimate
\begin{align*}
    &\mathbb{E}\|\Phi_{\mathcal{V}_{s}}\circ \Phi_{A}\left(\Tilde{x}_{k},\Tilde{v}_{k}\right)-\Phi_{\mathcal{V}_{s}}\circ \Phi_{A}\left(x_{k},v_{k}\right)\|^{2}_{a,b} \leq \left(7 + 12ah^{2}C_{G}\right)\|(\overline{x}_{k},\overline{v}_{k})\|^{2}_{a,b},
\end{align*}
and we can use the previous estimate of $\mathcal{A}$ to estimate the preconstant. 
\item Finally for SVV we estimate
\begin{align*}
    &\mathbb{E}\|\Phi_{\mathcal{V}_{s}}\left(\Tilde{x}_{k},\Tilde{v}_{k}\right)-\Phi_{\mathcal{V}_{s}}\left(x_{k},v_{k}\right)\|^{2}_{a,b} \leq \left(6 + 3ah^{2}C_{G}\right)\|(\overline{x}_{k},\overline{v}_{k})\|^{2}_{a,b},
\end{align*}
and
\begin{align*}
    &\mathbb{E}\|\Phi_{\mathcal{V}_{s}}\circ \Phi_{A} \left(\Tilde{x}_{k},\Tilde{v}_{k}\right)-\Phi_{\mathcal{V}_{s}}\circ \Phi_{A} \left(x_{k},v_{k}\right)\|^{2}_{a,b} \leq \left(7 + 6ah^{2}C_{G}\right)\|(\overline{x}_{k},\overline{v}_{k})\|^{2}_{a,b}.
\end{align*}

\end{enumerate}
We have all desired preconstants and penalty terms for the contraction rate when the gradient is a stochastic estimate.

\end{proof}

\begin{proposition}\label{prop:Wasserstein_stoch}
Consider the numerical schemes for stochastic gradient kinetic Langevin dynamics given in Appendix \ref{Appendix:SG-Algorithms} and Table \ref{Table:stoc_constants}, where the potential $U$ is $m$-strongly convex and $M$-$\nabla$Lipschitz. Assume a stochastic gradient approximation defined by $(\mathcal{G},\rho)$ (see Definition \ref{def:stochastic_gradient}) satisfying  Assumption \ref{Assumption:Bounded_Variance} with constant $C_{G}$. We use $P^{n}_{h}$ to denote the marginal transition kernel of the numerical schemes. For the constants given in Table \ref{Table:constants} and \ref{Table:stoc_constants} we have for any two synchronously coupled chains, $\left(x_{k},v_{k}\right)$ and $\left(\Tilde{x}_{k},\Tilde{v}_{k}\right)$ under the assumptions specific to the schemes imposed in Theorem \ref{Theorem:Stochastic_Gradients} we have for all $1 \leq p \leq \infty$ and all $\mu,\nu \in \mathcal{P}_{p}(\mathbb{R}^{2n})$, and  all $k \in \mathbb{N}$,
\[
\mathcal{W}^{2}_{2}\left(\nu P^k_{h} ,\mu P^k_{h} \right) \leq 3C(h)\max{\left\{M,\frac{1}{M}\right\}}\left(1 - c\left(h\right)\right)^{k} \mathcal{W}^{2}_{2}\left(\nu,\mu\right).
\]
\end{proposition}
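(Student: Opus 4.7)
The plan is to follow the proof of Proposition \ref{prop:Wasserstein} essentially verbatim, but with Theorem \ref{Theorem:Stochastic_Gradients} (expected contraction) substituted for the pathwise contraction hypothesis used in the deterministic-gradient case.

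First I would fix $k \in \mathbb{N}$ and, for $1\leq p \leq \infty$, pick an optimal coupling $\xi^{\star} \in \Gamma(\mu,\nu)$ of the initial laws. On the same probability space I then construct the two chains $(z_j)_{j\leq k}$ and $(\tilde z_j)_{j\leq k}$ by a synchronous coupling in \emph{both} the Brownian increments and the stochastic-gradient draws (and the randomized midpoints $u_k$ when relevant), exactly as specified in the statement of Theorem \ref{Theorem:Stochastic_Gradients}. Because each chain marginally evolves under $P_h$, the joint law of $(z_k,\tilde z_k)$, call it $\xi_k$, is a coupling of $\mu P_h^k$ and $\nu P_h^k$; this is the point that must be handled with some care, because in the stochastic-gradient setting we must enlarge the probability space to include the gradient noise, and verify that the common noise construction still yields the correct marginal kernels on each side.

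Conditioning on the initial pair $(z_0,\tilde z_0)$ and applying Theorem \ref{Theorem:Stochastic_Gradients} (with $a=1/M$ and the scheme-specific $b,c(h),C(h),s$ from Tables \ref{Table:constants} and \ref{Table:stoc_constants}) gives
\[
\mathbb{E}\bigl[\|z_k - \tilde z_k\|^{2}_{a,b}\,\big|\,z_0,\tilde z_0\bigr] \leq C(h)(1-c(h))^{k}\,\|z_0-\tilde z_0\|^{2}_{a,b},
\]
where the exponent shift between $s$ and $k$ is absorbed into $C(h)$ exactly as in Proposition \ref{prop:Wasserstein}. Taking the outer expectation against $\xi^{\star}$ and using that $\xi^{\star}$ realises $\mathcal{W}_2(\mu,\nu)$ yields
\[
\mathbb{E}\|z_k - \tilde z_k\|^{2}_{a,b} \leq C(h)(1-c(h))^{k}\,\mathcal{W}_{2,a,b}^{2}(\mu,\nu).
\]

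Finally I would convert the modified norm to the Euclidean norm via the equivalences in Section \ref{Sec:Quadratic_Norm}. Combining $\tfrac{1}{2}\|\cdot\|^2_{a,0} \leq \|\cdot\|^2_{a,b} \leq \tfrac{3}{2}\|\cdot\|^2_{a,0}$ with $\min\{1,a\}\|\cdot\|^2 \leq \|\cdot\|^2_{a,0} \leq \max\{1,a\}\|\cdot\|^2$ produces the multiplicative factor $3\max\{a,1/a\} = 3\max\{M,1/M\}$ on the right-hand side and an analogous factor on the left, so that $\mathcal{W}_{2}^{2}(\mu P_h^k,\nu P_h^k) \leq \mathbb{E}\|z_k-\tilde z_k\|^2$ can be bounded above by the desired expression. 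The only mild subtlety, besides the coupling construction mentioned above, is ensuring that Theorem \ref{Theorem:Stochastic_Gradients} is invoked conditionally on $(z_0,\tilde z_0)$ and then integrated, which is legitimate because the Brownian increments and gradient draws used in synchronous coupling are independent of the initial pair; no new analytic step beyond Theorem \ref{Theorem:Stochastic_Gradients} is needed.
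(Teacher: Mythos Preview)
Your proposal is correct and follows essentially the same route as the paper: take a $\mathcal{W}_2$-optimal coupling of the initial laws, run the synchronously coupled chains (in Brownian noise, stochastic gradient, and randomized midpoint), apply the expected contraction of Theorem \ref{Theorem:Stochastic_Gradients} conditionally and integrate, then pass from $\|\cdot\|_{a,b}$ to the Euclidean norm via the equivalences of Section \ref{Sec:Quadratic_Norm}. The paper makes one point slightly more explicit than you do, namely that the contraction is first obtained for the auxiliary kernel $\hat P_h$ (e.g.\ stochastic-gradient $\mathcal{ABAO}$ for BAOAB) and then transferred to $P_h$ using the head/tail preconstant estimates, which is where $C(h)$ actually enters; your remark about absorbing the $s$ vs.\ $k$ shift into $C(h)$ is gesturing at the same mechanism.
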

\begin{proof}
   We remark that the stochastic gradients are independent from position and hence can be marginalized out in the following estimates over the extended state space. We first denote $\hat{P}_{h}$ to be the transition kernel for which contraction is proved in Theorem \ref{Theorem:Total_theorem} or \cite{leimkuhler2023contraction}. For example stochastic gradient $\mathcal{ABAO}$ for BAOAB. From Theorem \ref{Theorem:Stochastic_Gradients} and following \cite{monmarche2021high}[Corollary 20] we know that for $z_{k} = (x_{k},v_{k})$, $\Tilde{z}_{k} = (\Tilde{x}_{k},\Tilde{v}_{k})$ such that $z_{0} = (x_{0},v_{0}) \sim \mu$ and $\Tilde{z}_{0} = (\Tilde{x}_{0},\Tilde{v}_{0}) \sim \nu$ and $(z_{0},\Tilde{z}_{0})$ is a $\mathcal{W}_{2}$ optimal coupling of $\mu$ and $\nu$ then under $\hat{P}_{h}$
    \[
    \mathcal{W}^{2}_{2,a,b}\left(\mu \hat{P}^{k}_{h},\nu \hat{P}^{k}_{h}\right) \leq \mathbb{E}\|z_{k} - \Tilde{z}_{k}\|^{2}_{a,b} \leq (1-c(h))^{k}\mathcal{W}^{2}_{2,a,b}\left(\mu,\nu\right),
    \]
    then we can use the equivalence of norms in Section \ref{Sec:Quadratic_Norm} and the preconstant estimates of Table \ref{Table:stoc_constants} to achieve the desired result for $P_{h}$.
\end{proof}

\begin{remark}
    We remark that the contraction rate of BAOAB and rOABAO can be upper bounded by a simpler form for example $\mathcal{O}\left(mh^{2}/(1-\eta) - h^{2}C_{G}/M\right)$, but we have included the more detailed estimate because it has the property that as you take the friction parameter $\gamma \to \infty$ then the contraction rate is of the same order as the overdamped Langevin dynamics scheme as discussed in Section \ref{sec:Overdamped_Langevin}.
\end{remark}

\subsection{Overdamped Langevin Dynamics}\label{sec:Overdamped_Langevin}

If we take the limit  of the friction parameter ($\gamma \to \infty$) in (\ref{eq:underdamped_langevin}), with a time-rescaling ($t' =\gamma t$) we have the overdamped Langevin dynamics given by (see \cite{pavliotis2014stochastic}[Sec 6.5])
\begin{equation}\label{eq:overdamped}
dX_{t}  = -\nabla U(X_t) dt + \sqrt{2}dW_t.
\end{equation}
This equation has invariant measure with density proportional to $\exp(-U(x))$, like the marginal of the underdamped counterpart.

Two discretizations of the SDE (\ref{eq:overdamped}) considered in \cite{leimkuhler2023contraction} and linked to the BAOAB and OBABO schemes through high friction limits are the Euler-Maruyama (EM) method which is defined by the update rule
\begin{equation}
 x_{k+1} = x_{k} - h\nabla U\left(x_{k}\right) + \sqrt{2h}\xi_{k+1},  
\end{equation}
and the BAOAB limit method of Leimkuhler and Matthews (LM)(\cite{leimkuhler2013rational,leimkuhler2014long}) which is defined by the update rule
\[
x_{k+1} = x_{k} - h\nabla U\left(x_{k}\right) + \sqrt{2h}\frac{\xi_{k+1} + \xi_{k}}{2}.
\]

Now we can apply coupling arguments to the overdamped dynamics in a simpler way (using the standard Euclidean distance), but we will also consider the case of stochastic gradients. Coupling arguments in the overdamped setting have been extensively studied with and without stochastic gradients (see \cite{dalalyan2017theoretical,durmus2017nonasymptotic,cheng2018convergence,dalalyan2017further,durmus2019high, durmus2019analysis, dwivedi2018log}).

\begin{proposition}\label{prop:Overdamped_stoch}
    Consider the Euler-Maruyama and Leimkuhler-Matthews schemes for stochastic gradient Langevin dynamics, where the potential $U$ is $m$-strongly convex and $M$-$\nabla$Lipschitz. Assume a stochastic gradient approximation defined by $(\mathcal{G},\rho)$ (see Definition \ref{def:stochastic_gradient}) satisfying  Assumption \ref{Assumption:Bounded_Variance} with constant $C_{G}$. We consider any sequence of synchronously coupled random variables (in Brownian increment and stochastic gradient) with initial conditions $x_{0} \in \mathbb{R}^{n}$ and $y_{0} \in \mathbb{R}^{n}$. We have the contraction property
\[
\mathbb{E}\|x_{k} - y_{k}\|^{2} \leq (1-\left(hm\left(2-hM\right) - h^{2}C_{G}\right))^{k}\|x_{0} - y_{0}\|^{2}.
\]
\end{proposition}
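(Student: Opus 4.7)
The plan is to adapt the stochastic gradient contraction argument used in Theorem \ref{Theorem:Stochastic_Gradients}, but in the much simpler overdamped (one-component) setting, where the Euclidean norm suffices and no modified norm with cross terms is required. The first observation is that both the EM and LM schemes for synchronously coupled chains (same Brownian increment and same stochastic gradient sample $W_k$) produce the same difference recursion, since the noise cancels. Writing $\bar{x}_k = \tilde{x}_k - x_k$, one has
\begin{equation*}
\bar{x}_{k+1} \;=\; \bar{x}_k - h\bigl(\mathcal{G}(\tilde{x}_k,W_k) - \mathcal{G}(x_k,W_k)\bigr) \;=\; (I - h\tilde{Q}_k)\,\bar{x}_k,
\end{equation*}
where $\tilde{Q}_k := \int_0^1 D_x\mathcal{G}(x_k + t\bar{x}_k,W_k)\,\mathrm{d}t$ is defined via the mean value theorem applied to $\mathcal{G}$. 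Conditionally on $\mathcal{F}_k$ (the history up to step $k$), $\tilde{Q}_k$ is a random matrix whose conditional mean is $Q_k := \int_0^1 \nabla^2 U(x_k + t\bar{x}_k)\,\mathrm{d}t$, and whose eigenvalues of the mean lie in $[m,M]$ by the convexity and Lipschitz-gradient assumptions on $U$.

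The second step is to take the conditional expectation and expand:
\begin{equation*}
\mathbb{E}\bigl[\|\bar{x}_{k+1}\|^2 \mid \mathcal{F}_k\bigr] \;=\; \bar{x}_k^{T}\,\mathbb{E}\bigl[(I-h\tilde{Q}_k)^{T}(I-h\tilde{Q}_k) \mid \mathcal{F}_k\bigr]\,\bar{x}_k \;=\; \bar{x}_k^{T}\bigl[(I-hQ_k)^2 + h^2\,\mathrm{Var}(\tilde{Q}_k \mid \mathcal{F}_k)\bigr]\bar{x}_k,
\end{equation*}
using $\mathbb{E}[\tilde{Q}_k^2 \mid \mathcal{F}_k] = Q_k^2 + \mathrm{Var}(\tilde{Q}_k \mid \mathcal{F}_k)$ and the symmetry of $Q_k$. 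For the deterministic part, I would analyze the scalar function $\lambda \mapsto (1-h\lambda)^2$ on $[m,M]$: under the implicit stepsize restriction $h \leq 1/M$, the maximum is at $\lambda = m$, giving
\begin{equation*}
(1-hm)^2 \;=\; 1 - 2hm + h^2 m^2 \;\leq\; 1 - 2hm + h^2 mM \;=\; 1 - hm(2 - hM),
\end{equation*}
so $\bar{x}_k^{T}(I-hQ_k)^2 \bar{x}_k \leq (1 - hm(2-hM))\|\bar{x}_k\|^2$. For the variance part, Assumption \ref{Assumption:Bounded_Variance} immediately yields $\bar{x}_k^{T}\,\mathrm{Var}(\tilde{Q}_k \mid \mathcal{F}_k)\,\bar{x}_k = \mathbb{E}[\|(\tilde{Q}_k - Q_k)\bar{x}_k\|^2 \mid \mathcal{F}_k] \leq C_G \|\bar{x}_k\|^2$.

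Combining the two bounds gives the one-step contraction
\begin{equation*}
\mathbb{E}\bigl[\|\bar{x}_{k+1}\|^2 \mid \mathcal{F}_k\bigr] \;\leq\; \bigl(1 - (hm(2-hM) - h^2 C_G)\bigr)\,\|\bar{x}_k\|^2,
\end{equation*}
and taking the full expectation and iterating over $k$ yields the claim. The only subtlety I expect is the eigenvalue optimization: one has to confirm that $(1-h\lambda)^2$ attains its maximum on $[m,M]$ at $\lambda = m$, which requires verifying the sign of $1-hM$; this effectively encodes a mild stepsize restriction $h \leq 1/M$ that is implicit in the statement. Everything else is a routine application of tower property and iteration, with the noise in both EM and LM disappearing automatically under synchronous coupling, so that the two schemes share the same contraction rate.
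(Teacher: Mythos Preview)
Your proof is correct and follows essentially the same approach as the paper: synchronous coupling cancels the noise (for both EM and LM), the mean value theorem introduces the random Jacobian $\tilde{Q}_k$, and taking expectation splits the bound into a deterministic contraction term plus a variance term controlled by Assumption~\ref{Assumption:Bounded_Variance}.

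The only minor difference is in how the deterministic part is handled. You optimise the scalar function $\lambda\mapsto(1-h\lambda)^2$ over $[m,M]$ and then bound $(1-hm)^2\le 1-hm(2-hM)$; this argument as written needs $h\le 1/M$ so that the maximum sits at $\lambda=m$. The paper instead uses the operator inequality $Q^2\preceq MQ$ (valid because the eigenvalues of $Q$ lie in $[m,M]$), giving directly
\[
(I-hQ)^2 \;=\; I - 2hQ + h^2 Q^2 \;\preceq\; I - h(2-hM)Q \;\preceq\; \bigl(1-hm(2-hM)\bigr)I,
\]
the last step requiring only $2-hM>0$, i.e.\ $h<2/M$. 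This buys a slightly wider stepsize range without any eigenvalue optimisation, but the two arguments are otherwise interchangeable.
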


\begin{proof}
If we first consider two chains $x_{k}$ and $y_{k}$ with shared noise such that
\begin{align*}
    x_{k+1} = x_{k} - h \mathcal{G}(x_{k},W_{k}) + \sqrt{2h}\xi_{k}, \quad y_{k+1} = y_{k} - h\mathcal{G}(y_{k},W_{k}) + \sqrt{2h}\xi_{k},
\end{align*}
where $W_{k} \sim \rho$ and $\xi_{k} \sim \mathcal{N}(0,1)^{n}$ for all $k \in \mathbb{N}$ and this can be either the Euler-Maruyama or Leimkuhler-Matthews method, we have chosen Euler-Maruyama. Then we have that 
\begin{align*}
    &\mathbb{E}\|x_{k+1} - y_{k+1}\|^2 = \mathbb{E}\|x_{k} - y_{k} + h(-\mathcal{G}(x_{k},W_{k}) - (-\mathcal{G}(y_{k},W_{k}))\|^{2} \\
    &= \|x_{k} - y_{k}\|^{2} - 2h \langle x_{k} - y_{k}, Q(x_{k} - y_{k})\rangle \\
    & \hspace{0.3in} + h^{2}\langle x_{k} - y_{k}, Q^{2} (x_{k} - y_{k}) \rangle + h^{2}\langle x_{k} - y_{k},\textnormal{Var}(\Tilde{Q})\left(x_{k} - y_{k}\right)\rangle,
\end{align*}
where $\Tilde{Q} = \int^{1}_{t = 0}D_{x}\mathcal{G}(x_{k} + t(y_{k} - x_{k}),W_{k})dt$ and $Q = \mathbb{E}(\Tilde{Q})$. $Q$ has eigenvalues which are bounded between $m$ and $M$, so $Q^{2} \preceq MQ$, and hence 
\[
h^{2}\langle x_{k} - y_{k}, Q^{2} (x_{k} - y_{k}) \rangle \leq h^{2}M \langle x_{k} - y_{k}, Q(x_{k} - y_{k}) \rangle.
\]
If we impose Assumption \ref{Assumption:Bounded_Variance} on the Jacobian of the stochastic gradient then we have that 
\[
h^{2}\langle x_{k} - y_{k},\textnormal{Var}(\Tilde{Q})\left(x_{k} - y_{k}\right)\rangle \leq h^{2}C_{G}\|x_{k} - y_{k}\|^{2},
\]
and therefore 
\begin{align*}
    \|x_{k+1} - y_{k+1}\|^2 &\leq \|x_{k} - y_{k}\|^{2} - h(2 - hM)\langle x_{k} - y_{k},Q(x_{k} - y_{k}) \rangle \\
    & \hspace{0.3in} + h^{2}\langle x_{k} - y_{k},\textnormal{Var}(\Tilde{Q})\left(x_{k} - y_{k}\right)\rangle \\
    &\leq \|x_{k} - y_{k}\|^{2}(1 - hm(2-hM) + h^{2}C_{G}).
\end{align*}
\end{proof}
In the same way as Proposition \ref{prop:Wasserstein_stoch} the contraction property of Proposition \ref{prop:Overdamped_stoch} implies convergence in Wasserstein distance in the stochastic gradient setting. Now if we take the limit as $\gamma \to \infty$ for the BAOAB and rOABAO scheme we get a contribution from the stochastic gradient of $\mathcal{O}\left(h^{4}C_{G}\right)$ in the convergence rate estimate, and for the overdamped analysis we have a contribution of $\mathcal{O}\left(h^{4}C_{G}\right)$ in the high friction limit of BAOAB and rOABAO. However for OBABO we get a contribution of $\mathcal{O}\left(h^{2}C_{G}/M\right)$, which agrees with the overdamped Langevin analysis for the largest choice of stepsize. 

\section{Overdamped Limit} 
Now reflecting on the contraction rates achieved in \cite{leimkuhler2023contraction} we also consider the $\gamma$-limit convergent  (GLC) property, i.e. the convergence of the integrator obtained in the $\gamma \to \infty$ limit.

\subsection{BAOAB and OBABO}
As originally noted in \cite{leimkuhler2013rational} the high friction limit of the BAOAB method is
\begin{align*}
    x_{k+1} &= x_{k} - \frac{h^{2}}{2} \nabla U(x_{k}) + \frac{h}{2}\left(\xi_{k} + \xi_{k+1}\right),
\end{align*}
which is simply the Leimkuhler-Matthews scheme of \cite{leimkuhler2013rational} with stepsize $h^{2}/2$. As studied in \cite{leimkuhler2023contraction} this imposes stepsize restrictions $h^{2} \leq 2/M$ due to the analysis of the overdamped counterpart.  The limiting contraction rate is given by
\[
\lim_{\gamma \to \infty} c\left(h\right) = \frac{h^{2}m}{4},
\]
which agrees with contraction rate estimates for the overdamped dynamics. 

Similarly for the OBABO scheme the limiting method
\begin{align*}
    x_{k+1} &= x_{k} -\frac{h^{2}}{2}\nabla U(x_{k}) + h \xi_{k+1},
\end{align*}
is the Euler-Maruyama scheme for overdamped Langevin with stepsize $h^{2}/2$, and the limit contraction rate is consistent with the contraction rate for the underdamped dynamics as noted in \cite{leimkuhler2023contraction}.

\subsection{BBK Integrator}
Taking the limit as $\gamma \to \infty$ and by considering two consecutive iterations ($v_{k+1} = - v_{k}$ in this limit) one arrives at the following update rule
\[
x_{k+2} = x_{k} - \frac{h^{2}}{2}\left(\nabla U(x_{k+1}) + \nabla U(x_{k}) \right) + \frac{\sqrt{2\gamma}h^{3/2}}{2} \left(\xi_{k} + \xi_{k+1} \right),
\]
and hence the method is not GLC as this does not converge to overdamped dynamics as the stepsize is taken to zero.

\subsection{Stochastic position Verlet and  stochastic velocity Verlet}
If one takes the limit as $\gamma \to \infty$ for the stochastic position and velocity Verlet then we get the operators
\begin{align*}
    \mathcal{V}_{s}(h)&: v \to \xi,\\
    \mathcal{A}(h)&: x \to x + h v,
\end{align*}
hence these schemes do not converge to the overdamped dynamics  as one takes the stepsize to zero.
\subsection{rOABAO}

The rOABAO scheme is GLC and, interestingly, by taking the high friction limit one arrives at the scheme
\[
x_{k+1} = x_{k} - \frac{h^{2}}{2}\nabla U(x_{k} + u \xi_{k}) + h \xi_{k},
\]
where $u \sim \mathcal{U}(0,h)$, which has the correct invariant measure and is a randomized midpoint version of the Euler-Maruyama scheme for overdamped Langevin dynamics and the one-step HMC scheme of \cite{bou2022unadjusted}.
\section{Numerical Experiments}

To quantify and validate our convergence results and contraction rates we approximate the spectral gap of the numerical scheme $c(h)$ for an Anisotropic Gaussian example. We then compare this to the continuous dynamics via
\[
\frac{1-c(h)}{h},
\]
which converges to the spectral gap of the continuous dynamics as $h \to 0$, and is normalized by stepsize. We also compare the bias of the numerical integrators in a Bayesian classification application.

\subsection{Anisotropic Gaussian}

We first consider a simple low-dimensional example to compare the convergence rates,  the anisotropic Gaussian distribution on $\mathbb{R}^{2}$ with potential $U: \mathbb{R}^{2} \mapsto \mathbb{R}$ given by $U(x,y) = \frac{1}{2}mx^{2} + \frac{1}{2}My^{2}$. This potential satisfies Assumption \ref{sec:assumptions} with constants $M$ and $m$ respectively. For this example we can analytically solve for the contraction rates, which coincide with the convergence rates of $\mathbb{E}(X_{n})$. We can do this by computing the spectral gap of the transition matrix $P$, by $1 - |\lambda_{\rm{max}}|$, where $\lambda_{\rm{max}}$ is the largest eigenvalue of the matrix $P$ due to Gelfand's formula. This converges to the spectral gap of the continuous dynamics as $h \to 0$.

The dependence of the convergence rate on the friction parameter $\gamma$ is given in Figure \ref{fig:True_Contraction}. We will study how this changes for the discretisations with contour plots of stepsize versus contraction rate for all the numerical methods we consider. If we take a slice of our contour plots for small stepsizes then this will coincide with Figure \ref{fig:True_Contraction}. This is given in Figure \ref{fig:fig}.

\begin{figure}[H]
\begin{centering}
\includegraphics[width=7cm]{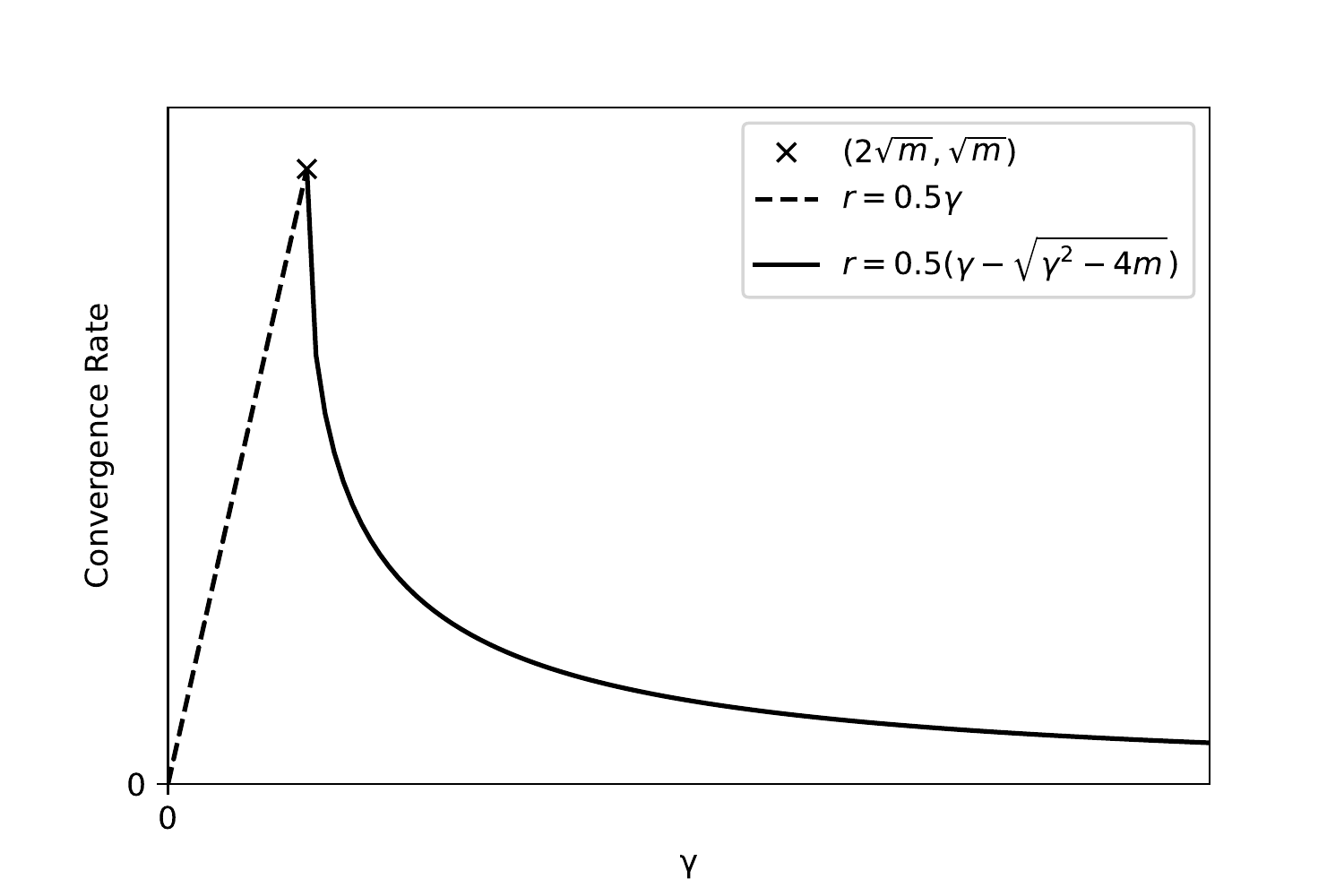}
\par\end{centering}
\caption{\label{fig:True_Contraction}}
\end{figure}

Due to the fact that each update matrix $P$ for the anisotropic Gaussian using the rOABAO scheme is in fact a random matrix, we estimate the contraction rate using \cite{furstenberg1960products,kargin2010products}, where
\[
\lim_{N \to \infty} \log{\|P_{1}P_{2}...P_{N}\|/N} \to \log(1-c(h)),
\]
where $P_{i}$ for $i \in \mathbb{N}$ is the transition matrix of the $i^{\rm{th}}$ iteration. We approximate this limit by Monte Carlo simulations with a random $u \sim [0,h]$ from the randomized midpoint at each stage to approximate the spectral radius.

\begin{figure}[htb]
\includegraphics[width=5.0in]{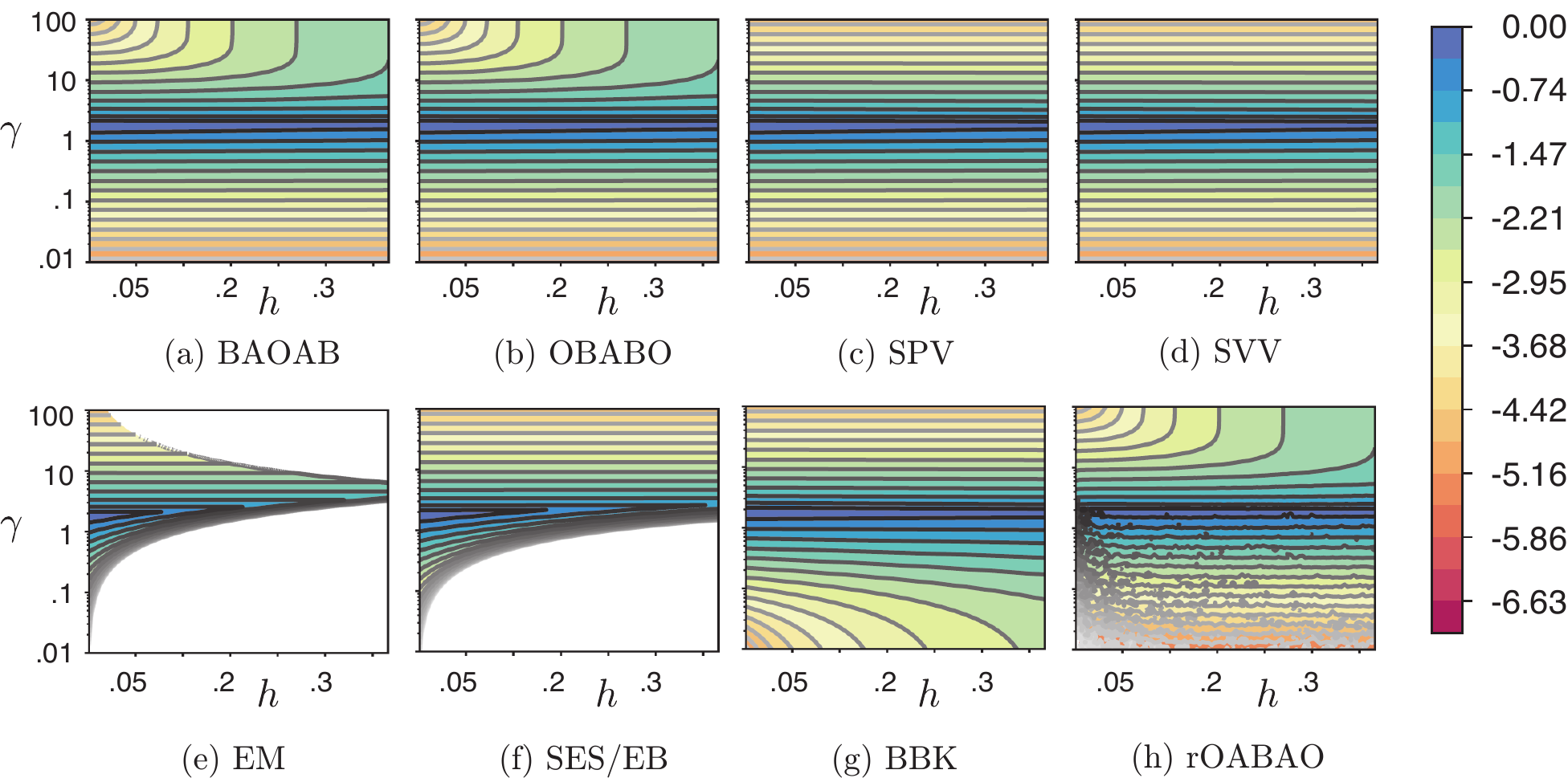}
        \caption{Contour plots of $\ln{\left(\frac{1 - c(h)}{h}\right)}$ for various schemes in the case of an anisotropic Gaussian with parameters $m = 1$ and $M = 10$. Regions of white indicate instability. The rOABAO contour plot is approximate and all other plots are exact (analytic).}
        \label{fig:fig}
\end{figure}

Figure \ref{fig:fig} illustrates the exact synchronously coupled contraction rates for all the numerical integrators we consider (apart from for rOABAO, which is an approximate Monte Carlo estimation) for a range of stepsizes $h$ and friction parameters $\gamma$. BAOAB, OBABO, rOABAO fail to approximate the true kinetic Langevin dynamics for large stepsizes, but still have low bias in the invariant measure as they act like overdamped Langevin dynamics. The $\gamma$-limit convergent property is reflected in Figure \ref{fig:fig} for large $\gamma$ as BAOAB, OBABO and rOABAO have large contraction rates for large values of the stepsize and no longer scale with $1/\gamma$, like the other schemes. SVV, SPV and BBK remain stable, but have convergence rates which scale with $1/\gamma$, indicated by the parallel contour lines in Figure \ref{fig:fig}. The SES and EM methods have large regions of instability, SES being unstable for small values of the friction parameter when $h$ scales larger than $\gamma$.

We have only illustrated convergence results towards the invariant measure, there has been work which provides Wasserstein bias estimates for a few of the numerical methods explored (see \cite{monmarche2021high,monmarche2022hmc,sanz2021wasserstein}). Although the focus of this article is to provide convergence rate estimates, we will provide a comparative numerical study of the bias of each of these numerical methods for some  choices of the friction parameter for an application in the following section.

\subsection{Bayesian Logistic Regression on MNIST}\label{sec:BLL}

We next consider a more involved example, which has a $\nabla$-Lipschitz and convex potential. This is a Bayesian posterior sampling application in multinomial logistic regression using the MNIST machine learning data set \cite{lecun2010mnist}. The data set contains $60,000$ training data points and $10,000$ test data points. The images are of size $28$ by $28$ pixels and hence can be represented in $\mathbb{R}^{784}$. However, we will consider the reduced problem of classifying digits $3$ and $5$.  Sample images are shown in Figure \ref{fig:MNIST}.

\begin{figure}[H]
    \centering
    \begin{minipage}{\textwidth}
        \centering
        \includegraphics[width=0.18\textwidth]{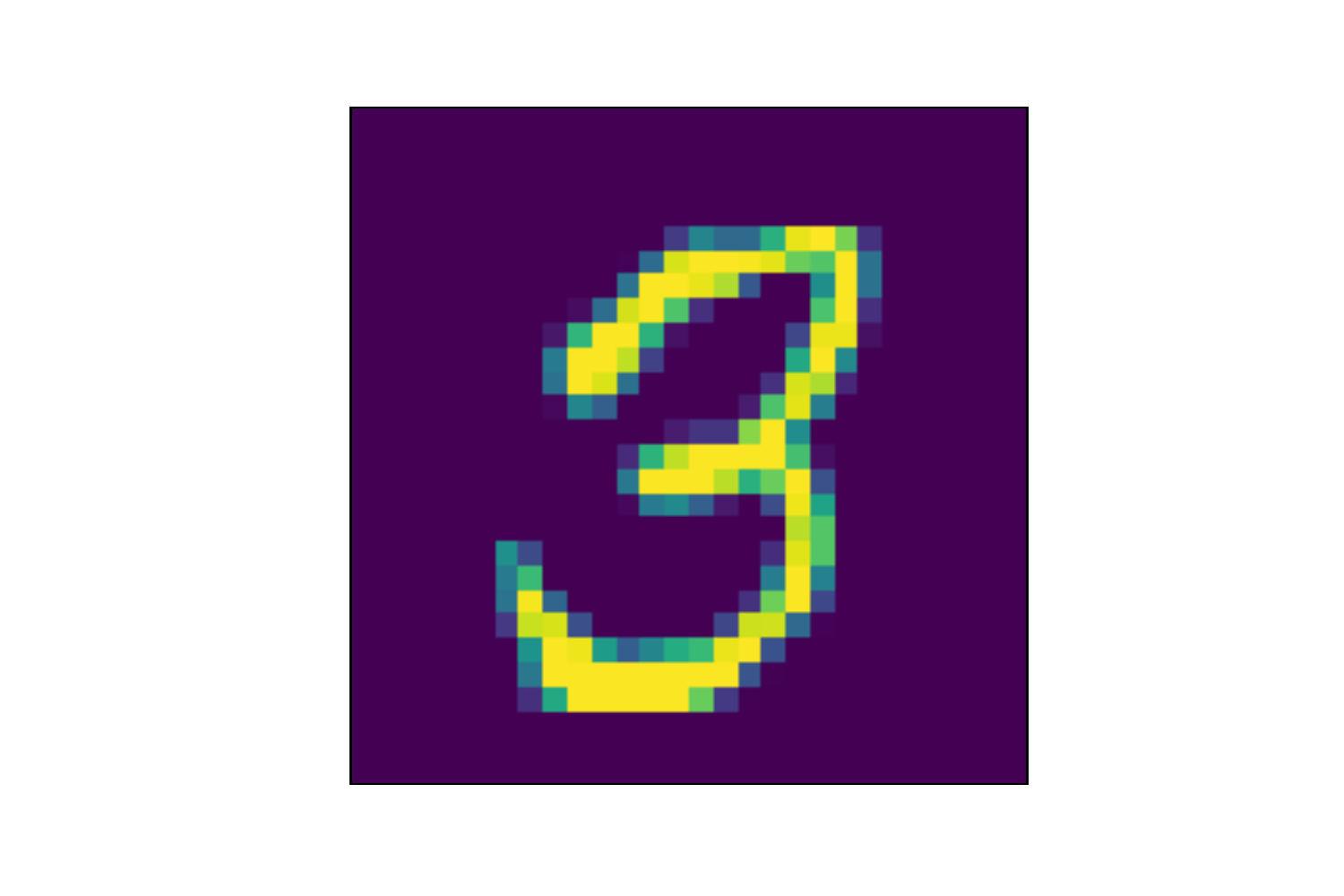}
        \includegraphics[width=0.18\textwidth]{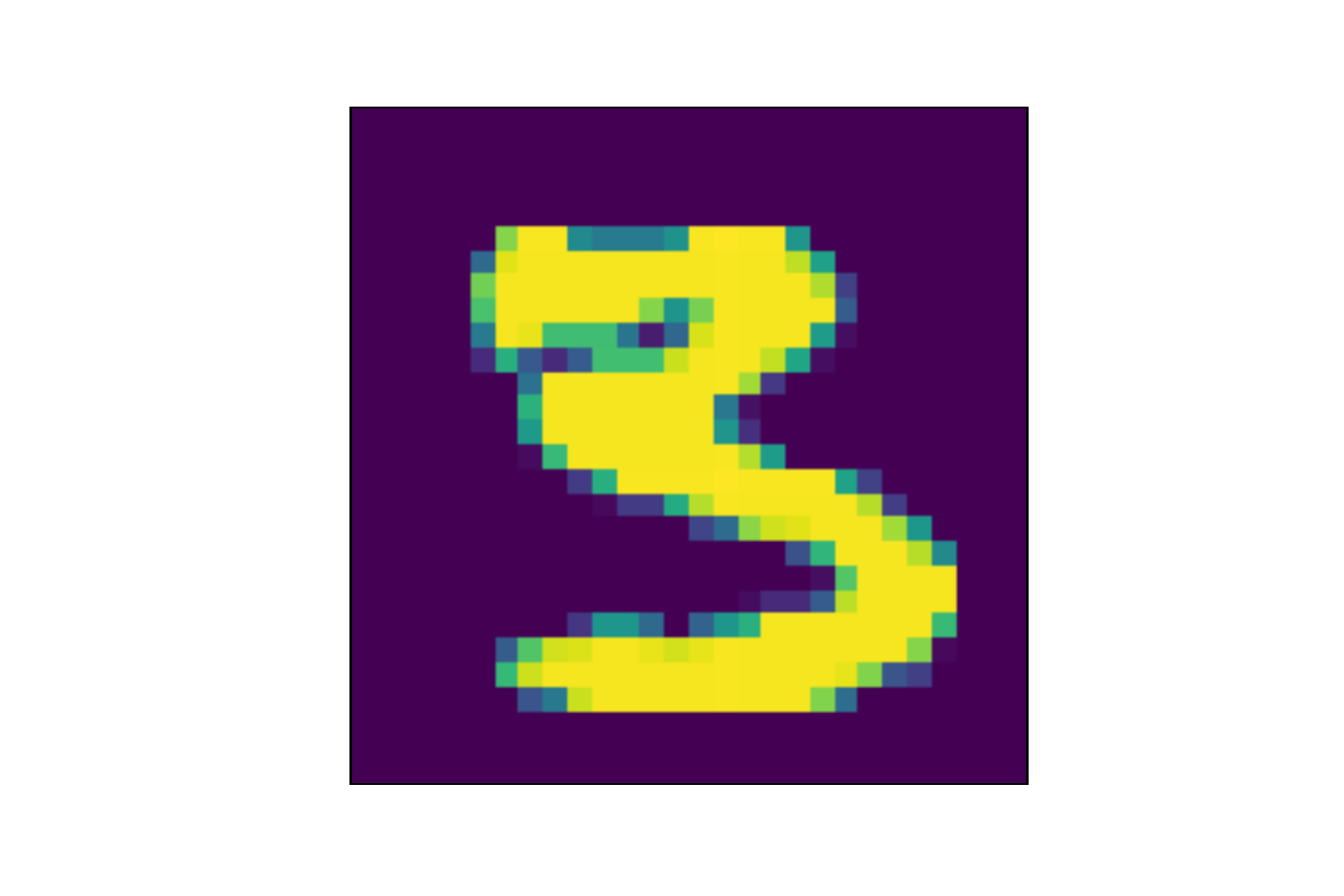}
        \includegraphics[width=0.18\textwidth]{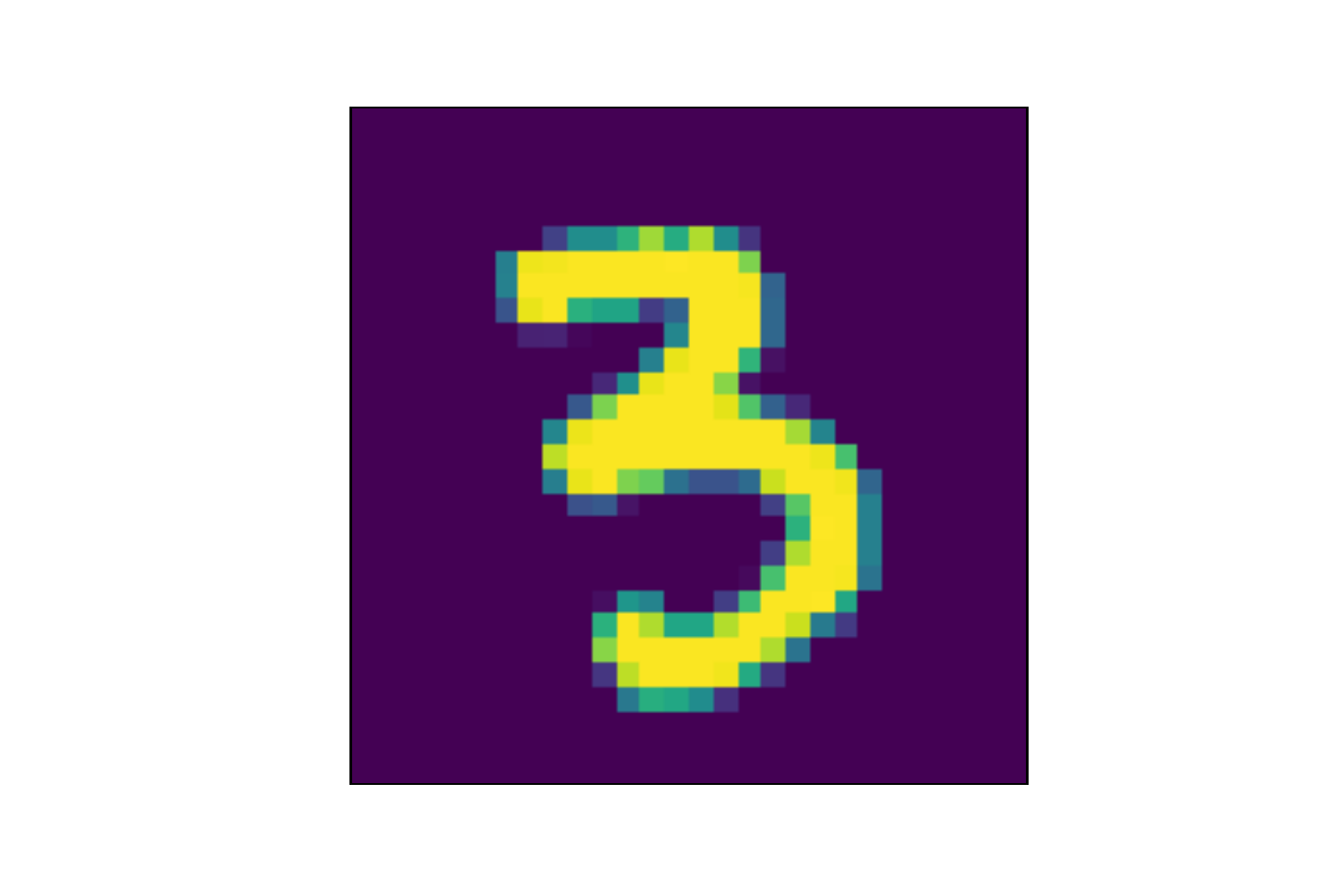}
        \includegraphics[width=0.18\textwidth]{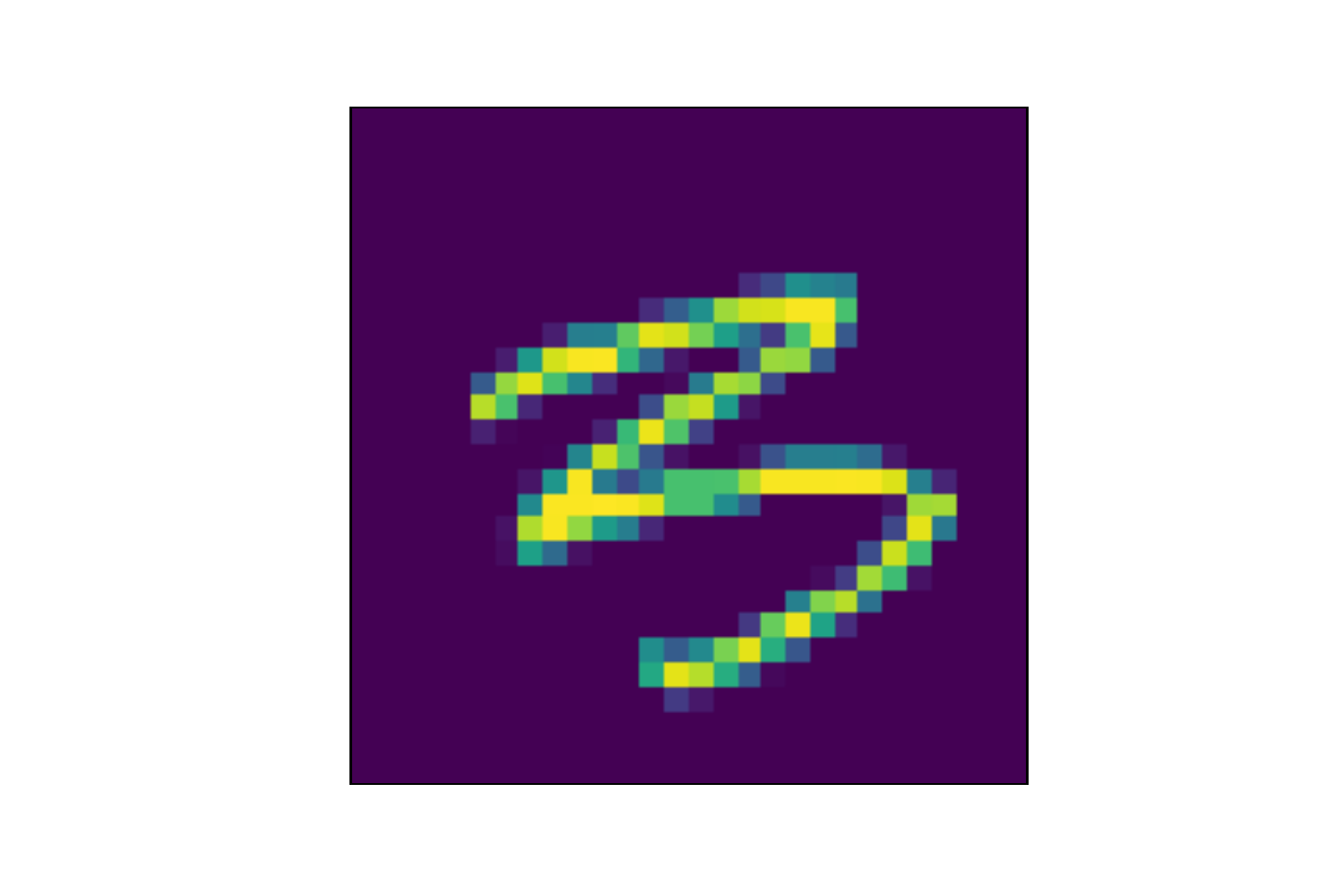}
        \includegraphics[width=0.18\textwidth]{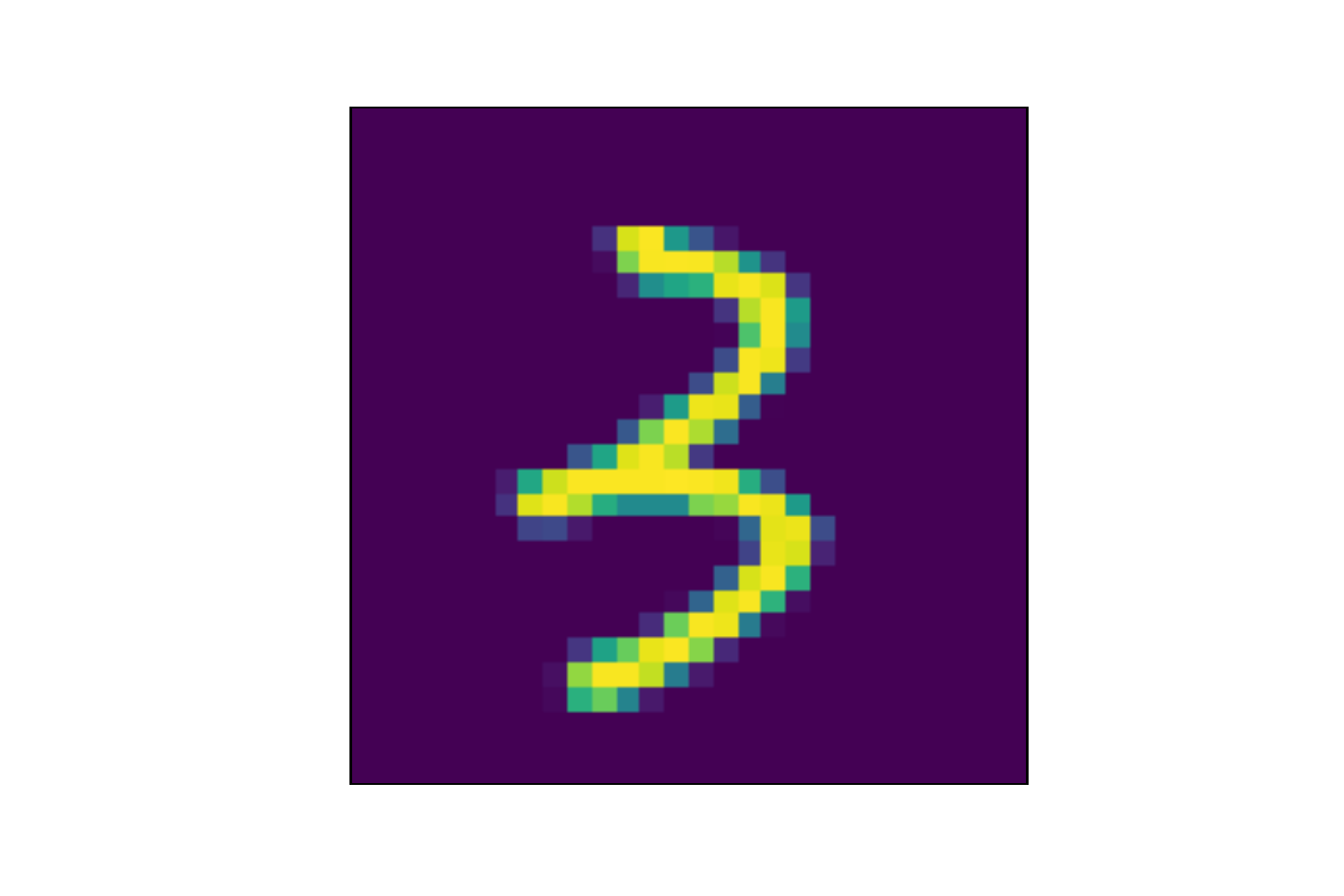}
    \end{minipage}\hfill
    \begin{minipage}{\textwidth}
        \centering
        \includegraphics[width=0.18\textwidth]{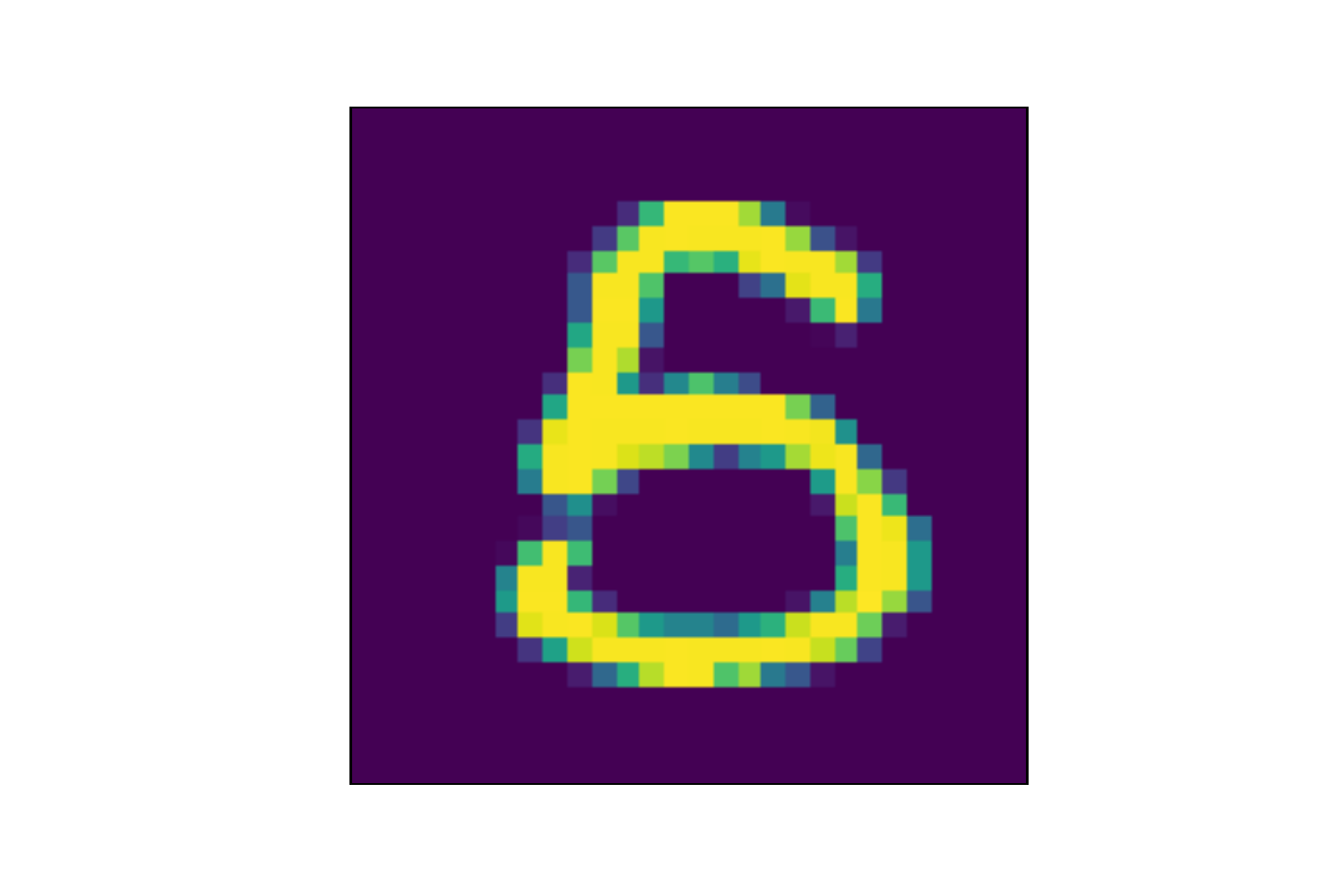}
        \includegraphics[width=0.18\textwidth]{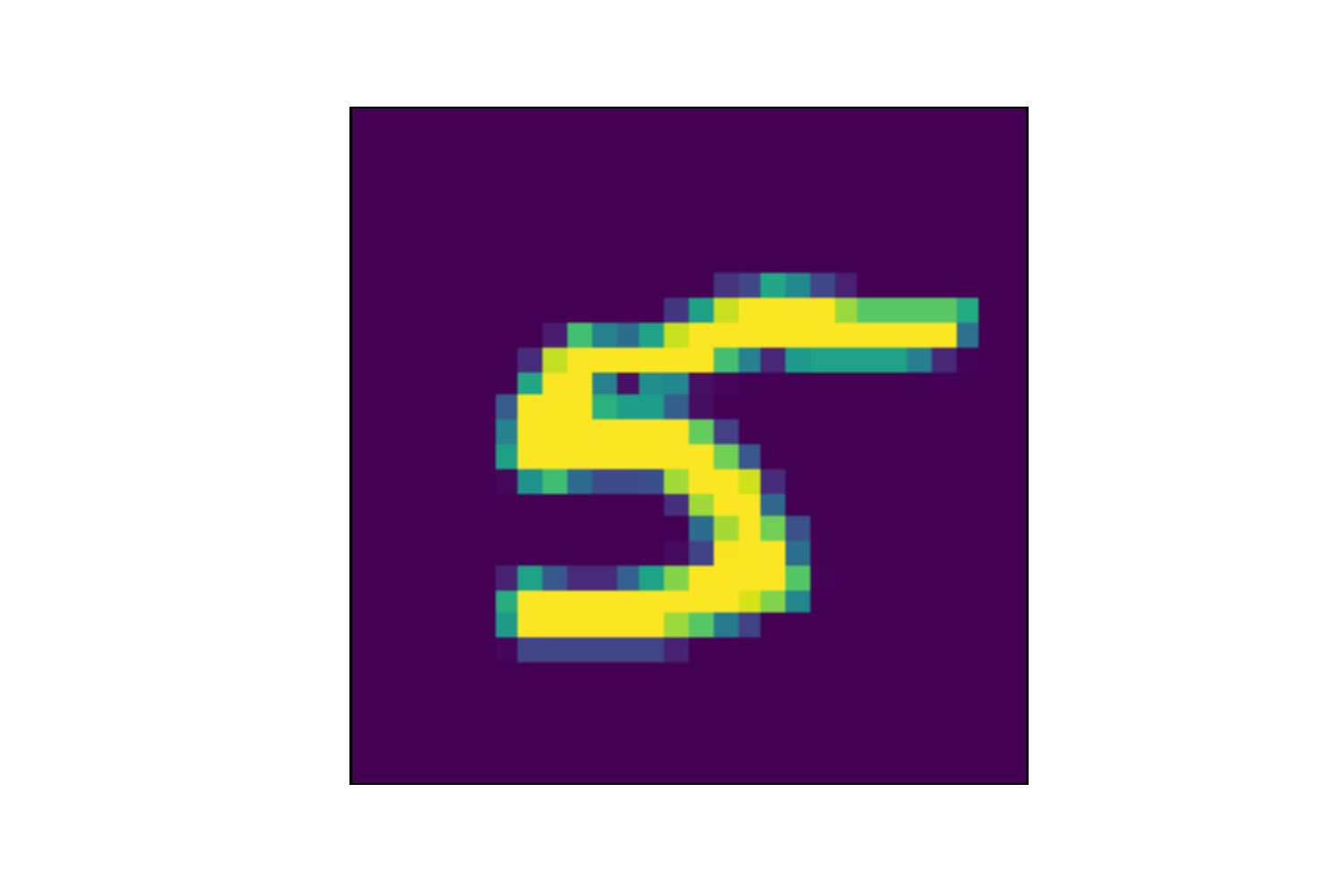}
        \includegraphics[width=0.18\textwidth]{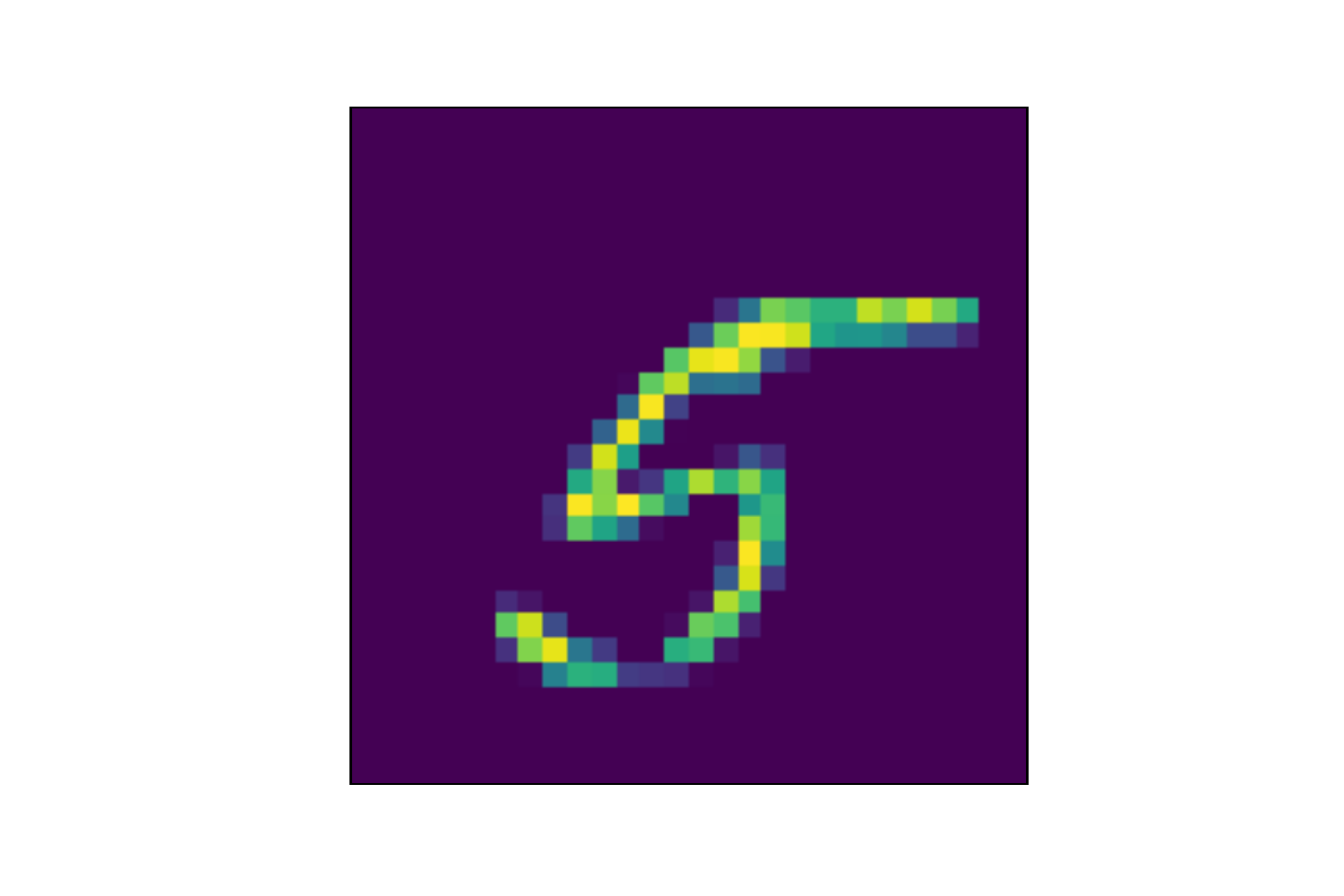}
        \includegraphics[width=0.18\textwidth]{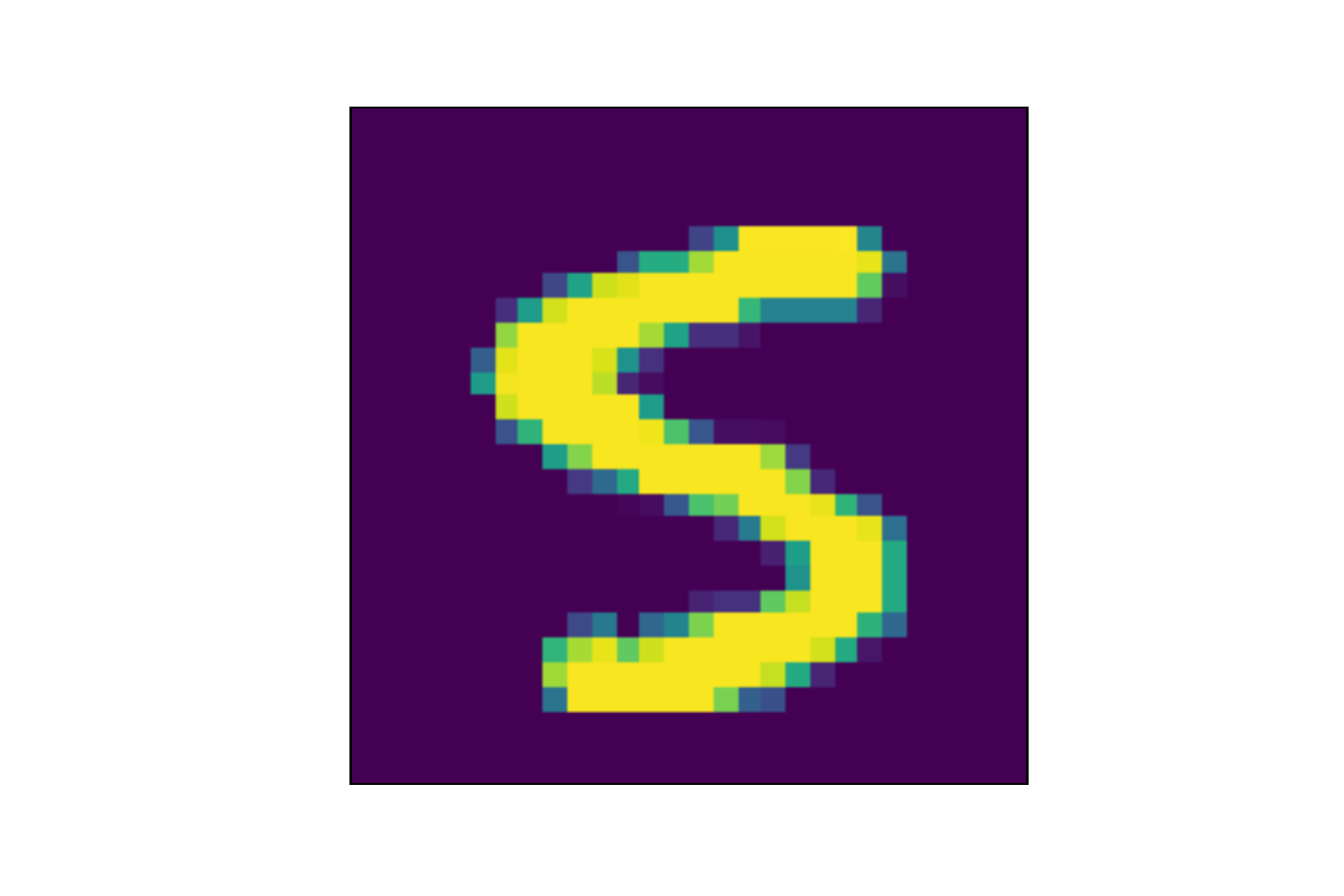}
        \includegraphics[width=0.18\textwidth]{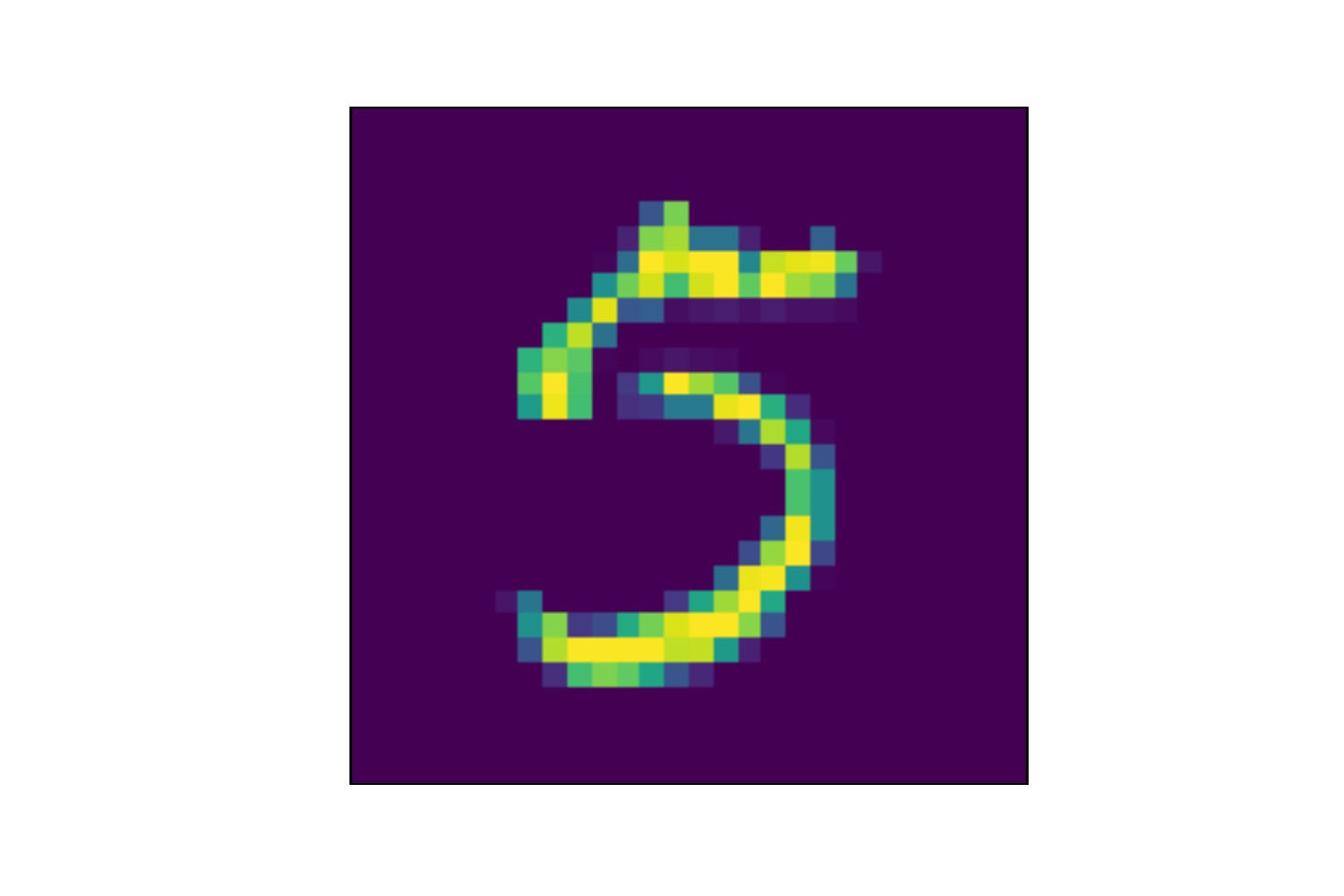}
    \end{minipage}
    \caption{MNIST $3$ and $5$ digits.}
    \label{fig:MNIST}
\end{figure}

We use a i.i.d. Gaussian prior $p_{0}$ with mean 0 and variance $\sigma^{2} = 0.001$. The likelihood function for logistic regression is
\[
p(y^{j}\mid x^{j}, \q) = \frac{\exp{\left(y^{j}\left<x^{j}, \q\right>\right)}}{1+\exp{\left(y^{j}\left<x^{j}, \q\right>\right)}},
\]
where there are $2$ classes (i.e. $y^j$ can take values $0$ and $1$, with 1 corresponding to digit 5, and 0 corresponding to digit 3) and $(x^{j},y^{j})^{N}_{j = 1}$ are the respective training points and labels for a data set of size $N$ (there are $N=11552$ training images of 3 or 5). We then define the posterior potential by
\begin{equation}\label{eq:BLL_potential}
  U(\q) = -\log{\left(p_{0}(\q)\right)} - \sum^{N}_{i = 1} \log{\left( p\left(y^{j}\mid x^{j},\q\right)\right)}.   
\end{equation}
A commonly used method in machine learning and other fields is rely on a stochastic gradient approximation, an unbiased estimator of the gradient of the potential defined in \eqref{eq:BLL_potential}. This is typically obtained based on a sub-sample of size $b$ of a data set of size $N$, where $b << N$, i.e. for a random selection $I_{b} \subset I_{N} := \{1,...,N\}$ one would consider the gradient of
\begin{equation}\label{eq:BLL_SG}
  \widehat{\nabla_{\q} U(\q)}_{SG} = -\nabla_{\q}\log{\left(p_{0}(\q)\right)} - \frac{N}{b}\sum_{i \in I_{b}} \nabla_{\q}\log{\left( p\left(y^{j}\mid x^{j},\q\right)\right)},  
\end{equation}
where the sub-samples are chosen i.i.d at each gradient evaluation (or iteration) of the algorithm. Let $W=I_{b}$ as defined above, and $\mathcal{G}(\q,W)=\widehat{\nabla_{\q} U(\q)}_{SG}$, then it is easy to see that the conditions of Definition \ref{def:stochastic_gradient} hold.

One more accurate estimator for the gradient is the variance reduced stochastic gradient (\cite{johnson2013accelerating}), also called the control variate method in the context of MCMC (see \cite{quiroz2018speeding,baker2019control}). This estimator uses the minimizer (or an approximation) $\q_{\min}$, and estimates the gradient as
\begin{align}\label{eq:BLL_VRSG}
  \widehat{\nabla_{\q} U(\q)}_{VRSG} &= -\nabla_{\q}\log{\left(p_{0}(\q)\right)} - \nabla_{\q_{\min}}\sum^{N}_{i = 1} \log{\left( p\left(y^{j}\mid x^{j},\q_{\min}\right)\right)}\\
  \nonumber&-\frac{N}{b}\sum_{i \in I_{b}} \left[\nabla_{\q}\log{\left( p\left(y^{j}\mid x^{j},\q\right)\right)}-\nabla_{\q_{\min}}\log{\left( p\left(y^{j}\mid x^{j},\q_{\min}\right)\right)}\right].
\end{align}
This can be also shown to satisfy Definition \ref{def:stochastic_gradient} with $W=I_b$ and $\mathcal{G}(\q,W)= \widehat{\nabla_{\q} U(\q)}_{VRSG}$. Both \eqref{eq:BLL_SG} and \eqref{eq:BLL_VRSG} are unbiased estimators of the gradient. In situations where the distribution is concentrated near the minimizer (as the sample size is large compared to the number of parameters, or the prior is sufficiently strong), the \eqref{eq:BLL_VRSG} approximation has much smaller variance, and we found that this reduces the bias of sampling algorithms.
In the following numerics we first consider full gradients for each scheme. We also implemented variance reduced stochastic gradients for BAOAB, based on \eqref{eq:BLL_VRSG} with batch size $b=100$.

We minimized the potential based on the BFGS algorithm, and computed the smallest and largest eigenvalues of the Hessian at the minimizer, which were $m=10^3$ and $M=1.7342\cdot 10^5$. Note that computing the upper and lower bounds on the Hessian globally is not easy for this problem, so we used these eigenvalues at the minimizer instead for setting the parameters in our simulations.
We tried two different friction parameters: $\gamma=\sqrt{M}$ (the lowest value of $\gamma$ for which our theory works) and $\gamma=\sqrt{m}$ (a good choice based on the contraction rates for Gaussians shown on Figure \ref{fig:fig}). 
In terms of stepsize, we tried $h\in \{2/\sqrt{M}, 1/\sqrt{M}, 1/(2\sqrt{M}), 1/(4\sqrt{M})\}$. The stepsize $h=2/\sqrt{M}$ is near the anticipated stability threshold of these methods, this is confirmed by the fact that a larger stepsize ($h=4/\sqrt{M}$) resulted in unstable behaviour and biases above $10^3$ for all methods. 

We used the potential $U$ as a test function, which is often a good choice for examining convergence of Markov chains. The ground truth posterior mean of $U$ was established based on running a well-tuned HMC with accept/reject steps (400 parallel runs, 440 million gradient evaluations in total, with $10\%$ burn-in), this had a standard deviation of $0.023$. The posterior standard deviation of $U$ was also estimated based on these samples, it was found to be $19.82$. 

All tested methods were run in parallel 80 times for 120000 iterations per run (20000 burn-in, 100000 samples), initiated from the minimum of the potential. We computed effective sample sizes based on the approach of \cite{vats2019multivariate}, using the Matlab package \url{https://github.com/lacerbi/multiESS}. All methods were implemented in Matlab on a desktop computer using GPU acceleration.
\begin{table}[H]
\begin{center}
\begin{tabular}{ |c|c|c|c|c| } 
\hline
Algorithm& $\begin{matrix} h=2/\sqrt{M}\\ \gamma=\sqrt{M} \end{matrix}$ & $\begin{matrix} h=1/\sqrt{M}\\ \gamma=\sqrt{M} \end{matrix}$ & $\begin{matrix} h=1/(2\sqrt{M})\\ \gamma=\sqrt{M} \end{matrix}$  & $\begin{matrix} h=1/(4 \sqrt{M}) \\ \gamma=\sqrt{M} \end{matrix}$ \\
\hline
EM & $4.2 (\pm 0.089)$ & $1.5 (\pm 0.13)$ & $0.79 (\pm 0.18)$ & $0.28 (\pm 0.23)$\\
\hline
BBK & $2.7 (\pm 0.061)$ & $0.67 (\pm 0.099)$ & $0.016 (\pm 0.14)$ & $-0.18 (\pm 0.2)$\\
\hline
SPV & $123 (\pm 0.079)$ & $32.1 (\pm 0.091)$ & $8.19 (\pm 0.13)$ & $2.07 (\pm 0.18)$\\
\hline
SVV & $126 (\pm 0.097)$ & $32.8 (\pm 0.091)$ & $8.17 (\pm 0.13)$ & $2.03 (\pm 0.17)$\\
\hline
BAOAB & $-0.043 (\pm 0.049)$ & $-0.002 (\pm 0.058)$ & $0.13 (\pm 0.086)$ & $-0.055 (\pm 0.12)$\\
\hline
BAOAB VRSG & $0.47 (\pm 0.043)$ & $0.23 (\pm 0.066)$ & $0.035 (\pm 0.087)$ & $0.036 (\pm 0.12)$\\
\hline
OBABO & $2.7 (\pm 0.056)$ & $0.67 (\pm 0.076)$ & $0.22 (\pm 0.13)$ & $0.17 (\pm 0.19)$\\
\hline
rOABAO & $-2.6 (\pm 0.062)$ & $-0.61 (\pm 0.094)$ & $0.025 (\pm 0.13)$ & $-0.16 (\pm 0.19)$\\ 
\hline
SES/EB  & $2.6 (\pm 0.072)$ & $1.2 (\pm 0.094)$ & $0.71 (\pm 0.11)$ & $0.2 (\pm 0.18)$\\
\hline
\end{tabular}
\end{center}
\caption{Bias for potential function, $\gamma=\sqrt{M}$}
\label{Table:bias:M}
\end{table}

\begin{table}[H]
\begin{center}
\begin{tabular}{|c|c|c|c|c|} 
\hline
Algorithm& $\begin{matrix} h=2/\sqrt{M}\\ \gamma=\sqrt{m} \end{matrix}$ & $\begin{matrix} h=1/\sqrt{M}\\ \gamma=\sqrt{m} \end{matrix}$ & $\begin{matrix} h=1/(2\sqrt{M})\\ \gamma=\sqrt{m} \end{matrix}$  & $\begin{matrix} h=1/(4 \sqrt{M}) \\ \gamma=\sqrt{m} \end{matrix}$ \\
\hline
EM & $6.4\cdot 10^4 (\pm 0.82)$ & $1.5\cdot 10^4(\pm 0.72)$ & $1.1\cdot 10^3 (\pm 0.73)$ & $4.9 (\pm 0.11)$\\
\hline
BBK & $2.8 (\pm 0.034)$ & $0.68 (\pm 0.041)$ & $0.1 (\pm 0.05)$ & $0.0038 (\pm 0.066)$\\
\hline
SPV & $0.72 (\pm 0.036)$ & $0.14 (\pm 0.043)$ & $0.06 (\pm 0.054)$ & $-0.014 (\pm 0.073)$\\
\hline
SVV & $3.5 (\pm 0.036)$ & $0.81 (\pm 0.043)$ & $0.26 (\pm 0.061)$ & $0.05 (\pm 0.089)$\\
\hline
BAOAB & $0.03 (\pm 0.038)$ & $-0.011 (\pm 0.049)$ & $-0.046 (\pm 0.062)$ & $0.043 (\pm 0.074)$\\
\hline
BAOAB VRSG & $6.4 (\pm 0.04)$ & $2.4 (\pm 0.051)$ & $1.1 (\pm 0.063)$ & $0.55 (\pm 0.075)$\\
\hline
OBABO & $2.7 (\pm 0.032)$ & $0.65 (\pm 0.041)$ & $0.22 (\pm 0.052)$ & $0.11 (\pm 0.071)$\\
\hline
rOABAO & $-1.7 (\pm 0.041)$ & $-0.55 (\pm 0.041)$ & $-0.2 (\pm 0.054)$ & $-0.033 (\pm 0.081)$\\ 
\hline
SES/EB  & $6.0\cdot 10^4 (\pm 0.61)$ & $1.5\cdot 10^4 (\pm 0.48)$ & $1.1\cdot 10^3 (\pm 0.59)$ & $4.7 (\pm 0.068)$\\
\hline
\end{tabular}
\end{center}
\caption{Bias for potential function, $\gamma=\sqrt{m}$}
\label{Table:bias:m}
\end{table}

\begin{table}[H]
\begin{center}
\begin{tabular}{ |c|c|c|c|c| } 
\hline
Algorithm& $\begin{matrix} h=2/\sqrt{M}\\ \gamma=\sqrt{M} \end{matrix}$ & $\begin{matrix} h=1/\sqrt{M}\\ \gamma=\sqrt{M} \end{matrix}$ & $\begin{matrix} h=1/(2\sqrt{M})\\ \gamma=\sqrt{M} \end{matrix}$  & $\begin{matrix} h=1/(4 \sqrt{M}) \\ \gamma=\sqrt{M} \end{matrix}$ \\
\hline
EM   & $146 (\pm 0.7)$ & $221 (\pm 0.998)$ & $282 (\pm 0.822)$ & $327 (\pm 0.581)$\\
\hline
BBK & $85 (\pm 0.535)$ & $148 (\pm 0.726)$ & $221 (\pm 0.969)$ & $285 (\pm 0.933)$\\
\hline
SPV & $86.7 (\pm 0.554)$ & $148 (\pm 0.775)$ & $221 (\pm 0.887)$ & $284 (\pm 0.992)$\\
\hline
SVV & $86.5 (\pm 0.645)$ & $147 (\pm 0.801)$ & $222 (\pm 0.916)$ & $283 (\pm 0.825)$\\
\hline
BAOAB & $44.3 (\pm 0.304)$ & $88.7 (\pm 0.585)$ & $152 (\pm 0.812)$ & $228 (\pm 0.822)$\\
\hline
BAOAB VRSG & $44.6 (\pm 0.332)$ & $86.8 (\pm 0.578)$ & $152 (\pm 0.915)$ & $226 (\pm 0.934)$\\
\hline
OBABO & $68.6 (\pm 0.491)$ & $140 (\pm 0.84)$ & $218 (\pm 0.942)$ & $282 (\pm 0.809)$\\
\hline
rOABAO & $68.5 (\pm 0.507)$ & $140 (\pm 0.692)$ & $219 (\pm 0.781)$ & $283 (\pm 0.862)$\\ 
\hline
SES/EB  & $87.4 (\pm 0.593)$ & $149 (\pm 0.663)$ & $220 (\pm 0.831)$ & $284 (\pm 0.809)$\\
\hline
\end{tabular}
\end{center}
\caption{Gradient evaluations / ESS (potential function), $\gamma=\sqrt{M}$}
\label{Table:gradperess:M}
\end{table}

\begin{table}[H]
\begin{center}
\begin{tabular}{ |c|c|c|c|c| } 
\hline
Algorithm& $\begin{matrix} h=2/\sqrt{M}\\ \gamma=\sqrt{m} \end{matrix}$ & $\begin{matrix} h=1/\sqrt{M}\\ \gamma=\sqrt{m} \end{matrix}$ & $\begin{matrix} h=1/(2\sqrt{M})\\ \gamma=\sqrt{m} \end{matrix}$  & $\begin{matrix} h=1/(4 \sqrt{M}) \\ \gamma=\sqrt{m} \end{matrix}$ \\
\hline
EM & N.A. & N.A. & N.A. & $189 (\pm 0.955)$\\
\hline
BBK  & $15 (\pm 0.124)$ & $30.1 (\pm 0.233)$ & $57.5 (\pm 0.352)$ & $108 (\pm 0.717)$\\
\hline
SPV & $15.1 (\pm 0.106)$ & $29.7 (\pm 0.209)$ & $57.4 (\pm 0.408)$ & $109 (\pm 0.725)$\\
\hline
SVV & $15 (\pm 0.121)$ & $29.9 (\pm 0.222)$ & $57.5 (\pm 0.341)$ & $108 (\pm 0.628)$\\
\hline
BAOAB & $18.8 (\pm 0.128)$ & $36.4 (\pm 0.288)$ & $66.4 (\pm 0.461)$ & $116 (\pm 0.849)$\\
\hline
BAOAB VRSG  & $19.7 (\pm 0.169)$ & $36.4 (\pm 0.242)$ & $67.8 (\pm 0.447)$ & $114 (\pm 0.662)$\\
\hline
OBABO & $15 (\pm 0.118)$ & $30 (\pm 0.204)$ & $57.5 (\pm 0.471)$ & $108 (\pm 0.711)$\\
\hline
rOABAO & $16.5 (\pm 0.236)$ & $29.7 (\pm 0.218)$ & $58.2 (\pm 0.356)$ & $109 (\pm 0.669)$\\ 
\hline
SES/EB & N.A. & N.A. & N.A. & $108 (\pm 0.652)$  \\
\hline
\end{tabular}
\end{center}
\caption{Gradient evaluations / ESS (potential function), $\gamma=\sqrt{m}$. N.A. indicates that the method did not converge for the given stepsize.}
\label{Table:gradperess:m}
\end{table}
Firstly, when changing from $\gamma=\sqrt{M}$ to $\gamma=\sqrt{m}$, we can see that the changes in bias are not significant for BAOAB, OBABO, rOABAO, and BBK, the bias increases significantly for EM and SES (instability issues) and somewhat for BAOAB VRSG, and the bias decreases significantly for SPV and SVV. In terms of gradient evaluations per ESS, the choice $\gamma=\sqrt{m}$ is more efficient by a factor of 2-6 for all methods except EM and SES. This is in line with the recent research in accelerated convergence rates for underdamped Langevin dynamics (\cite{zhang2023improved,cao2019explicit}).

We can see that BAOAB has impressively low bias for the potential test function even at the largest stepsize $2/\sqrt{M}$, and it also has a competitive computational cost in terms of gradient evaluations / effective sample size (ESS). The VRSG variant of BAOAB has somewhat larger bias (especially at lower frictions), but it requires a similar number of iterations per ESS, with much lower computational cost per iteration compared to using full gradients. The rOABAO scheme based on randomized midpoints has a relatively low bias at all stepsizes, and requires a rather small number of gradient evaluations per iteration. It is beyond the scope of this paper, but we think that more significant differences could arise between these schemes for less smooth potentials.

\section{Conclusion}
In this article we have extended the results of \cite{leimkuhler2023contraction} to  further integration schemes. By building stepsize-dependent norms we achieve convergence rates which hold on a large interval of stepsize, in many cases the same as the stability threshold of the numerical method (up to a constant factor). We further considered the case of stochastic gradients, where we allow a flexible choice of unbiased gradient estimator under the assumption that the expected variance of the Jacobian of the estimator is bounded. We show that this results in a reduced convergence rate based on the variance of the Jacobian of the estimator, which coincides with what we have observed numerically for small batch sizes in a subsampled stochastic gradient. We have provided numerical results comparing the bias of each of the numerical methods based on choices of the friction parameter which are optimal according to our theory or the optimal choice for the Gaussian distribution, where we solved for the convergence rates exactly. We compared the errors of the integrators in a Bayesian logistic regression application and have seen that some of the integrators performed well with large stepsizes, even in the presence of stochastic gradients.

\section*{Acknowledgments}
The authors acknowledge the support of the Engineering and Physical Sciences Research Council Grant EP/S023291/1 (MAC-MIGS
Centre for Doctoral Training). The authors thank Sinho Chewi for the valuable discussion about acceleration in kinetic Langevin dynamics.

\bibliographystyle{amsplain}
\bibliography{references}

\appendix
\section{Stochastic Gradient Kinetic Langevin Dynamics Integrators}\label{Appendix:SG-Algorithms}
For the Euler-Maruyama, stochastic Euler scheme, rOABAO, stochastic position Verlet only one force evaluation is used in each iteration, so every gradient evaluation is taken as a stochastic gradient estimate. The complete algorithms are stated below in Algorithms \ref{alg:SG-EM},\ref{alg:SG-SES}, \ref{alg:SG-sOABAO} and \ref{alg:SG-SPV}.

\begin{algorithm}[H]
    \footnotesize
    \SetAlgoLined
	\begin{itemize}
		\item Initialize $\left(x_{0},v_{0}\right) \in \mathbb{R}^{2n}$, stepsize $h > 0$ and friction parameter $\gamma > 0$.
            \item Sample $G \sim \mathcal{G}(x,\cdot)$ 
		\item for $k = 1,2,...,K$ do
		\begin{itemize}
  
                \item[] Sample $\xi \sim \mathcal{N}(0,1)^{n}$
                \item[] $x_{k} \to x_{k-1} + h v_{k-1}$
                \item[] $v_{k} \to v_{k-1} - h G - h \gamma v_{k-1} + \sqrt{2\gamma h}\xi$
                \item[] Sample $G \sim \mathcal{G}(x_{k},\cdot)$ 
                
		\end{itemize}
            \item Output: Samples $(x_{k})^{K}_{k=0}$.
	\end{itemize}
            
	\caption{Stochastic Gradient Euler-Maruyama (EM)}
	\label{alg:SG-EM}
\end{algorithm}

\begin{algorithm}[H]
    \footnotesize
    \SetAlgoLined
	\begin{itemize}
		\item Initialize $\left(x_{0},v_{0}\right) \in \mathbb{R}^{2n}$, stepsize $h > 0$ and friction parameter $\gamma > 0$.
            \item Sample $G \sim \mathcal{G}(x,\cdot)$ 
		\item for $k = 1,2,...,K$ do
		\begin{itemize}
  
                \item[] Sample $(\zeta,\omega) \sim \mathcal{N}(0,\Sigma)$, where $\Sigma$ is given in \eqref{eq:SES_covariance}.
                \item[] Sample $G \sim \mathcal{G}(x_{k-1},\cdot)$
                \item[] $x_{k} \to x_{k-1} + \frac{1-\eta}{\gamma}v_{k-1} - \frac{\gamma h + \eta - 1}{\gamma^{2}}G + \zeta$
                \item[] $v_{k} \to \eta v_{k-1} - \frac{1-\eta}{\gamma}G + \omega$
		\end{itemize}
            \item Output: Samples $(x_{k})^{K}_{k=0}$.
	\end{itemize}
            
	\caption{Stochastic Gradient Stochastic Euler Scheme (SES/EB)}
	\label{alg:SG-SES}
\end{algorithm}
\begin{algorithm}[H]
    \footnotesize
    \SetAlgoLined
	\begin{itemize}
		\item Initialize $\left(x_{0},v_{0}\right) \in \mathbb{R}^{2n}$, stepsize $h > 0$ and friction parameter $\gamma > 0$.
            \item Sample $G \sim \mathcal{G}(x_{0},\cdot)$ 
		\item for $k = 1,2,...,K$ do
		\begin{itemize}
                \item[] Sample $u \sim \mathcal{U}(0,h)$
                \item[] Sample $G \sim \mathcal{G}(x_{k-1} + uv_{k-1},\cdot)$
                \item[] Sample $\xi \sim \mathcal{N}(0,1)^{n}$
                \item[(O)] $v \to \eta^{1/2} v_{k-1} + \sqrt{1-\eta}\xi$
                \item[] $x_{k} \to x_{k-1} + hv_{k-1} - \frac{h^{2}}{2}G$
                \item[] $v_{k} \to v_{k-1} - hG$
                \item[] Sample $\xi \sim \mathcal{N}(0,1)^{n}$
                \item[(O)] $v_{k} \to \eta^{1/2} v + \sqrt{1-\eta}\xi$
		\end{itemize}
            \item Output: Samples $(x_{k})^{K}_{k=0}$.
	\end{itemize}
            
	\caption{Stochastic Gradient rOABAO}
	\label{alg:SG-sOABAO}
\end{algorithm}

\begin{algorithm}[H]
    \footnotesize
    \SetAlgoLined
	\begin{itemize}
		\item Initialize $\left(x_{0},v_{0}\right) \in \mathbb{R}^{2n}$, stepsize $h > 0$ and friction parameter $\gamma > 0$.
            \item Sample $G \sim \mathcal{G}(x_{0},\cdot)$ 
		\item for $k = 1,2,...,K$ do
		\begin{itemize}

                \item[(A)] $x \to x_{k-1} + \frac{h}{2}v_{k-1}$
                \item[] Sample $G \sim \mathcal{G}(x,\cdot)$ and $\xi \sim \mathcal{N}(0,1)^{n}$
                \item[($\mathcal{V}_{s}$)] $v_{k} \to \eta v_{k-1}-\frac{1-\eta}{\gamma}G + \sqrt{1 - \eta^{2}}\xi$
                \item[(A)] $x_{k} \to x + \frac{h}{2}v_{k}$
        
		\end{itemize}
            \item Output: Samples $(x_{k})^{K}_{k=0}$.
	\end{itemize}
            
	\caption{Stochastic Gradient Stochastic Velocity Verlet (SVV)}
	\label{alg:SG-SPV}
\end{algorithm}

For BAOAB the first and last $\mathcal{B}$ of each iteration share a stochastic gradient evaluation to make the algorithm roughly one gradient evaluation per step. The complete algorithm is given in Algorithm \ref{alg:SG-BAOAB}.

\begin{algorithm}[H]
    \footnotesize
    \SetAlgoLined
	\begin{itemize}
		\item Initialize $\left(x_{0},v_{0}\right) \in \mathbb{R}^{2n}$, stepsize $h > 0$ and friction parameter $\gamma > 0$.
            \item Sample $G \sim \mathcal{G}(x_{0},\cdot)$ 
		\item for $k = 1,2,...,K$ do
		\begin{itemize}
			\item[(B)] $v \to v_{k-1} - \frac{h}{2}G$
                \item[(A)] $x \to x_{k-1} + \frac{h}{2}v$
                \item[] Sample $\xi \sim \mathcal{N}(0,1)^{n}$
                \item[(O)] $v \to \eta v + \sqrt{1-\eta^{2}}\xi$
                \item[(A)] $x_{k} \to x + \frac{h}{2}v$
                \item[] Sample $G \sim \mathcal{G}(x_{k},\cdot)$ 
                \item[(B)] $v_{k} \to v - \frac{h}{2}G$
		\end{itemize}
            \item Output: Samples $(x_{k})^{K}_{k=0}$.
	\end{itemize}
            
	\caption{Stochastic Gradient BAOAB}
	\label{alg:SG-BAOAB}
\end{algorithm}

Similarly for OBABO the first and last $\mathcal{B}$ of each iteration share a stochastic gradient evaluation to make the algorithm roughly one gradient evaluation per step. The complete algorithm is given in Algorithm \ref{alg:SG-OBABO}.

\begin{algorithm}[H]
    \footnotesize
    \SetAlgoLined
	\begin{itemize}
		\item Initialize $\left(x_{0},v_{0}\right) \in \mathbb{R}^{2n}$, stepsize $h > 0$ and friction parameter $\gamma > 0$.
            \item Sample $G \sim \mathcal{G}(x_{0},\cdot)$ 
		\item for $k = 1,2,...,K$ do
		\begin{itemize}
                \item[] Sample $\xi \sim \mathcal{N}(0,1)^{n}$
                \item[(O)] $v \to \eta^{1/2} v_{k-1} + \sqrt{1-\eta}\xi$
                \item[(B)] $x \to x_{k-1} - \frac{h}{2}G$
                \item[(A)] $x \to x + hv$
                \item[] Sample $G \sim \mathcal{G}(x,\cdot)$ 
                \item[(B)] $x_{k} \to x - \frac{h}{2}G$
                \item[] Sample $\xi \sim \mathcal{N}(0,1)^{n}$
                \item[(O)] $v_{k} \to \eta^{1/2} v + \sqrt{1-\eta}\xi$
		\end{itemize}
            \item Output: Samples $(x_{k})^{K}_{k=0}$.
	\end{itemize}
            
	\caption{Stochastic Gradient OBABO}
	\label{alg:SG-OBABO}
\end{algorithm}
If we express each iteration of the BBK methods as $\Phi_{B_{2}}\circ\Phi_{A}\circ \Phi_{B_{1}}$ as in Section \ref{Sec:Proofs}, then the last $\Phi_{B_{2}}$ step of each iteration and the first $\Phi_{B_{1}}$ of the next iteration share the same stochastic gradient evaluation to make the algorithm roughly one gradient evaluation per step. The complete algorithm is given in Algorithm \ref{alg:SG-BBK}.

\begin{algorithm}[H]
    \footnotesize
    \SetAlgoLined
	\begin{itemize}
		\item Initialize $\left(x_{0},v_{0}\right) \in \mathbb{R}^{2n}$, stepsize $h > 0$ and friction parameter $\gamma > 0$.
            \item Sample $G \sim \mathcal{G}(x_{0},\cdot)$ 
		\item for $k = 1,2,...,K$ do
		\begin{itemize}
                \item[] Sample $\xi \sim \mathcal{N}(0,1)^{n}$
                \item[($B_{1}$)] $v \to v_{k-1} + \frac{h}{2}\left(-G - \gamma v_{k-1} + \frac{\sqrt{2\gamma}}{\sqrt{h}}\xi\right)$
                \item[(A)] $x_{k} \to x_{k-1} + hv$
                \item[] Sample $G \sim \mathcal{G}(x_{k},\cdot)$ and $\xi \sim \mathcal{N}(0,1)^{n}$
                \item[($B_{2}$)] $v_{k} \to \left(1 + \frac{h}{2}\gamma\right)^{-1}\left(v - \frac{h}{2}G + \frac{\sqrt{2\gamma}}{\sqrt{h}}\xi\right)$
		\end{itemize}
            \item Output: Samples $(x_{k})^{K}_{k=0}$.
	\end{itemize}
            
	\caption{Stochastic Gradient Brunger-Brooks Karplus (BBK)}
	\label{alg:SG-BBK}
\end{algorithm}

Finally for SVV the last $\mathcal{V}_{s}$ step of each iteration and the first $\mathcal{V}_{s}$ of the next iteration share the same stochastic gradient evaluation. The complete algorithm is given in Algorithm \ref{alg:SG-SVV}.

\begin{algorithm}[H]
    \footnotesize
    \SetAlgoLined
	\begin{itemize}
		\item Initialize $\left(x_{0},v_{0}\right) \in \mathbb{R}^{2n}$, stepsize $h > 0$ and friction parameter $\gamma > 0$.
            \item Sample $G \sim \mathcal{G}(x_{0},\cdot)$ 
		\item for $k = 1,2,...,K$ do
		\begin{itemize}
                \item[] Sample $\xi \sim \mathcal{N}(0,1)^{n}$
                \item[($\mathcal{V}_{s}$)] $v \to \eta^{1/2} v_{k-1}-\frac{1-\eta^{1/2}}{\gamma}G + \sqrt{1 - \eta}\xi$
                \item[(A)] $x_{k} \to x_{k-1} + hv$
                \item[] Sample $G \sim \mathcal{G}(x_{k},\cdot)$ and $\xi \sim \mathcal{N}(0,1)^{n}$
                \item[($\mathcal{V}_{s}$)] $v_{k} \to \eta^{1/2} v -\frac{1-\eta^{1/2}}{\gamma}G + \sqrt{1 - \eta}\xi$
		\end{itemize}
            \item Output: Samples $(x_{k})^{K}_{k=0}$.
	\end{itemize}
            
	\caption{Stochastic Gradient Stochastic Velocity Verlet (SVV)}
	\label{alg:SG-SVV}
\end{algorithm}

\end{document}